% ****** Start of file aipsamp.tex ******
%
%   This file is part of the AIP files in the AIP distribution for REVTeX 4.
%   Version 4.1 of REVTeX, October 2009
%
%   Copyright (c) 2009 American Institute of Physics.
%
%   See the AIP README file for restrictions and more information.
%
% TeX'ing this file requires that you have AMS-LaTeX 2.0 installed
% as well as the rest of the prerequisites for REVTeX 4.1
%
% It also requires running BibTeX. The commands are as follows:
%
%  1)  latex  aipsamp
%  2)  bibtex aipsamp
%  3)  latex  aipsamp
%  4)  latex  aipsamp
%
% Use this file as a source of example code for your aip document.
% Use the file aiptemplate.tex as a template for your document.
\documentclass[%
aip,
% jmp,
% bmf,
% sd,
% rsi,
amsmath,amssymb,
%preprint,%
reprint,%
%author-year,%
%author-numerical,%
% Conference Proceedings
]{revtex4-1}

\usepackage{graphicx}% Include figure files
\usepackage{xcolor}
\usepackage{tikz}
%\graphicspath{{figures/}}
\usepackage{float}
\usepackage{subfigure}
\usepackage{natbib}
\usepackage{dcolumn}% Align table columns on decimal point
\usepackage{bm}% bold math
%\usepackage[mathlines]{lineno}% Enable numbering of text and display math
%\linenumbers\relax % Commence numbering lines
\usepackage{amsthm}
\usepackage[utf8]{inputenc}
\usepackage[T1]{fontenc}
\usepackage{mathptmx}
\usepackage{etoolbox}
\newtheorem{theorem}{Theorem}
\newtheorem{lemma}[theorem]{Lemma}
\newtheorem{corollary}[theorem]{Corollary}

\newtheorem{definition}{Definition}
%\newtheorem{remark}[theorem]{Remark}

% MATH OPERATORS

% ABBREVIATIONS

% LETTRES `CAL'

% LETTRES `GRASSES'

%\newcommand{\bm}{\mathbf{m}}

%% Apr 2021: AIP requests that the corresponding
%% email to be moved after the affiliations
\makeatletter
\def\@email#1#2{%
	\endgroup
	\patchcmd{\titleblock@produce}
	{\frontmatter@RRAPformat}
	{\frontmatter@RRAPformat{\produce@RRAP{*#1\href{mailto:#2}{#2}}}\frontmatter@RRAPformat}
	{}{}
}%
\makeatother
\begin{document}
	
	\preprint{AIP/123-QED}
	
	\title{Intermittent--synchronization in non-weakly coupled piecewise linear expanding map lattice: a geometric-combinatorics method}
	% Force line breaks with \\
	\author{Junke Zhang}
%	\altaffiliation[Also at ]{Department of Mathematics,
%		Nanjing University,Nanjing}%Lines break automatically or can be forced with \\
%
%	\email{Second.Author@institution.edu.}
	\affiliation{
		Department of Mathematics,
		Nanjing University, Nanjing, 210008.   xiaopangzhang223@163.com%\\This line break forced with \textbackslash\textbackslash
	}%
		\author{Yiqian Wang}
	
%	\author{C. Author}
%	\homepage{http://www.Second.institution.edu/~Charlie.Author.}
	\affiliation{%
			Department of Mathematics,
		Nanjing University, Nanjing, 210008.   Corresponding author: yiqianw@nju.edu.cn%\\This line break forced% with \\
	}%
	
	\date{\today}% It is always \today, today,
	%  but any date may be explicitly specified
%	\cortext[cor1]{Corresponding author at: xiaopangzhang223@163.com, Department of Mathematics, Nanjing University, Nanjing, 210008, China.}
	\begin{abstract}
		The coupled (chaotic) map lattices (CMLs) characterizes the collective dynamics of a spatially distributed system consisting of locally or globally coupled maps.
	%	A large number of numerical experiments show that
		 The current research on the dynamic behavior of CMLs is based on the framework of the Perron-Frobenius operator and mainly focuses on weakly-coupled cases. In this paper,  a novel geometric-combinatorics method for for non weakly-coupled CMLs is provided on the dynamical behavior of a two-node CMLs with identical piecewise linear expanding maps.
		We obtain a necessary-sufficient condition for  the uniqueness of absolutely continuous invariant measure (ACIM) and for the occurrence of intermittent-synchronization, that is, almost each orbit enters and exits an arbitrarily small neighborhood of the diagonal for an infinite number of times.
	%	Our method may provide a novel insight into the phase transition of CML.
	%	In this paper,  we focuse on the intermittent-synchronization behavior of a two-node CML constructed with piecewise expanding maps, with a particular emphasis on Lorenz map.
	%	We use a new geometric-combinatorics method and derive rigorous conditions for the emergence of intermettent-synchronization of  the coupled Lorenz map lattice, determined by the coupling strength \( c \).
	%	Specifically, intermittent-synchronization is shown to occur for \( c \in [0, \frac{1}{4}) \cup (\frac{3}{4}, 1] \) for almost all initial orbits.
	%	Departing from traditional approaches based on Frobenius-Perron operator, we adopt a geometric perspective to provide novel insights into the bifurcation behaviors of CMLs.
	%	Our method offers a flexible and intuitive framework for exploring dynamical transitions in more general form of CML and multi-node coupled systems, contributing to a deeper understanding of their dynamic behaviors.
		
	\end{abstract}
		\maketitle

%	\begin{quotation}
%	Coupled map lattice (CML) is  a dynamical system in which local subsystems are  usually generalized by the same chaotic map and interact through various coupling mechanisms. Synchronization describes the phenomenon that the difference among all subsystem states in CML tends to vanish. It together with intermittent synchronization widely exists in nature or science, such as secure communication based on chaotic synchronization, clustering in coupled pendulum clocks found by Huygens, turbulent flows, and neurodynamics including Epilepsy, Parkinson's.
%Numerical simulations showed that strong coupling may lead to synchronization, while weak coupling may lead to intermittent synchronization. However, theoretical results on this transition are very rare.
%	For this purpose, in this paper we develop a new geometric-combinatorics method and apply it on CML formed by some piecewise linear expanding maps with discontinuities. We prove a sufficient and necessary condition for the occurrence of the intermittent-synchronization as well as the uniqueness of ACIM, which completely coincides with numerical results and explain that the strength of couplings are responsible for the transition between synchronization and intermittent synchronization.
%Moreover, the uniqueness (and existence) of ACIM leads that the trajectory of almost each point in the sense of Lebesgue measure will visit almost everywhere of a region with a positive Lebesgue measure for infinite times.

%	\end{quotation}
	
	\section{\label{sec:1}Introduction}
	Coupled map lattices (CMLs) were first proposed  by Kaneko et al.\cite{kaneko1984period,kaneko1985spatial, waller1984spatial,kapral1985pattern,kuznetsov1983critical,kuznetsov1984model}.
	CML describe the temporal evolution of fields that can be expressed as the independent evolution of local subsystems (or elements) within these fields, typically governed by a map defined on a local phase space. This is followed by spatial interactions among these local subsystems, which are determined by an operator that acts on the global phase space of the CML.
	The local subsystems in a CML are organized in a lattice structure with the assumptions that all local dynamical systems are identical and that the spatial interactions between any given local system and the rest of the lattice are uniform across all subsystems.
	
	Chaotic synchronization  can be observed in coupled physical systems  \cite{lin2002chaotic,heagy1994chaotic} provided  the strength of this coupling exceeds a certain critical threshold,
	%Equally important, and perhaps more ordinary,
   while intermittent chaotic synchronization \cite{pecora2014cluster}  can be observed in  CMLs with a coupling smaller than the critical threshold.
	Intermittent-synchronization is the alternation dynamical behavior of (almost) synchronized and desynchronized states for infinitely many times, a special case of  cluster synchronization (CS), in which
	sets of synchronized elements emerge \cite{pogromsky2008partial}.

	This phenomenon manifests across scales such as clustering or pendulum waves in weakly coupled pendulum clocks by Huygens \cite{czolczynski2009clustering}; and in neurodynamics -- intermittent synchronous firings of neurons is the mechanism responsible for Epilepsy, Parkinson's and other neurological disorders, and  plays a very important role in the brain \cite{takahashi2010circuit,bullmore2009complex,kitajima2009cluster,ochi2008synchronization}.
Two-State on-off Intermittency and the onset of Turbulence was discussed in \cite{galuzio2010two}.
In \cite{boccaletti2000characterization} intermittent lag synchronization of two nonidentical symmetrically coupled R\"ossler systems is investigated. The effect of noncoherence on the onset of phase synchronization of two coupled chaotic oscillators was studied in \cite{Osipov2003three}. In \cite{budzinski2017detection}	the nonstationary transition to synchronized states of a neural network was displayed by a complex neural network. For more details, we can refer to Ding et al.\cite{ding1997stability,choudhary2017small,Papo2024brain}.
	These phenomena underscore the need for analytical frameworks to dissect the mechanisms of intermittent-synchronization.
	The central theoretical challenge lies in identifying universal mechanisms governing intermittent-synchronization.
	A more difficult question is to explore the mechanism on the occurrence of  transition between synchronization and intermittent-synchronization.
	However, theoretical results on these questions are very rare.
	This motivates this paper.

\subsection{Coupled map lattice model}
	
	Let $f : [0,1] \to [0,1]$ be a piecewise expanding map,
	$I$ be the $d \times d$ identity matrix, and
	$A$ be a $d \times d$ matrix such that $A \bm{e} = 0$, where $\bm{e}= (1, 1, \cdots, 1)^T$.
	For $d\ge 2$, the matrix $A$ is called the coupling matrix. The following coupling matrix corresponds to the nearest neighbor coupling (or Laplacian coupling):
	\[
	\begin{bmatrix}
		-2 & 1 & 0 & \cdots & 0 & 1 \\
		1 & -2 & 1 & \cdots & 0 & 0 \\
		0 & 1 & -2 & \cdots & 0 & 0 \\
		\vdots & \vdots & \vdots & \ddots & \vdots & \vdots \\
		0 & 0 & 0 & \cdots & -2 & 1 \\
		1 & 0 & 0 & \cdots & 1 & -2
	\end{bmatrix}_{\!\!d \times d},
	\]
	while
	the global coupling CML corresponds to
	\[
	\begin{bmatrix}
		-(d-1) & 1 & 1 & \cdots & 1 \\
		1 & -(d-1) & 1 & \cdots & 1 \\
		1 & 1 & -(d-1) & \cdots & 1 \\
		\vdots & \vdots & \vdots & \ddots & \vdots \\
		1 & 1 & 1 & \cdots & -(d-1)
	\end{bmatrix}_{\!\!d \times d}.
	\]
	When $d=2$, the coupling matrix is
	\[
	A = \left( \begin{array}{cc}
		-1 & 1 \\
		1 & -1
	\end{array} \right).
	\]
	Then a CML is a dynamical system generalized by the following map on $[0,1]^d$:
	\begin{equation}\label{equation:1}
		T: \ \bm{x} (n+1) = (I + cA) \bm{f}(\bm{x}(n)),
	\end{equation}
	where $c\in [0, \frac{1}{2}]$ ( $c\in [0,1]$ for $d=2$ ) for the nearest neighbor coupling (or $c \in[0,\frac{1}{d-1}]$ for the global coupling) is the coupling strength, $\bm{x}(n) = (x_1(n),\cdots, x_d(n))^T \in [0, 1]^d$ for $n \in \mathbb{N} \cup \{0\}$, and $\bm{f}(\bm{x}(n)) = (f(x_1(n)),\cdots, f(x_2(n)))^T$.
	
	Since $A \bm{e} = 0$, it is evident that the diagonal set $S_{inv} = \{(x_1,\cdots, x_d) \in [0, 1]^d \mid x_1 = x_2=\cdots=x_d\}$ is an invariant set of synchronized points under the iteration of $T$.
	Then that $S_{inv}$ of (\ref{equation:1}) is a global attractor is  equivalent to that synchronization occurs in CML (\ref{equation:1}).
	Moreover, intermittent-synchronization is equivalent to the following	
	\begin{eqnarray} \label{equation:2}
		\liminf_{n \to \infty} \, dist(\bm{x}(n), S_{inv}) = 0,\\
		\limsup_{n \to \infty} \, dist(\bm{x}(n), S_{inv}) \geq r_0 > 0.\label{equation:3}
	\end{eqnarray}
	for some $r_0>0$ and almost all initial points, where $dist(A, B)$ denotes the distance between two points or sets.
	In other words, the successive alternation between the ordered phase (almost synchronization) described by (\ref{equation:2}) and the disordered phase (desynchronization) described by (\ref{equation:3}) occurs for almost every orbit.
	
	When the number of nodes is two, (\ref{equation:2}) and (\ref{equation:3}) are equivalent to the following equation and inequality, respectively:
\begin{eqnarray}\label{equation:4}
	\liminf_{n \to \infty} \, |x_1(n) - x_2(n)| = 0,\\
	\limsup_{n \to \infty} \, |x_1(n) - x_2(n)| \geq r_0 > 0.\label{equation:5}
\end{eqnarray}
The intermittent-synchronization of 2-node CML is shown in the panel (a) of Fig. \ref{Fig.2node1} while the synchronization of 2-node CML is shown in the panel (b) of Fig. \ref{Fig.2node1}.
\begin{figure}
	\includegraphics[width=0.48\textwidth]{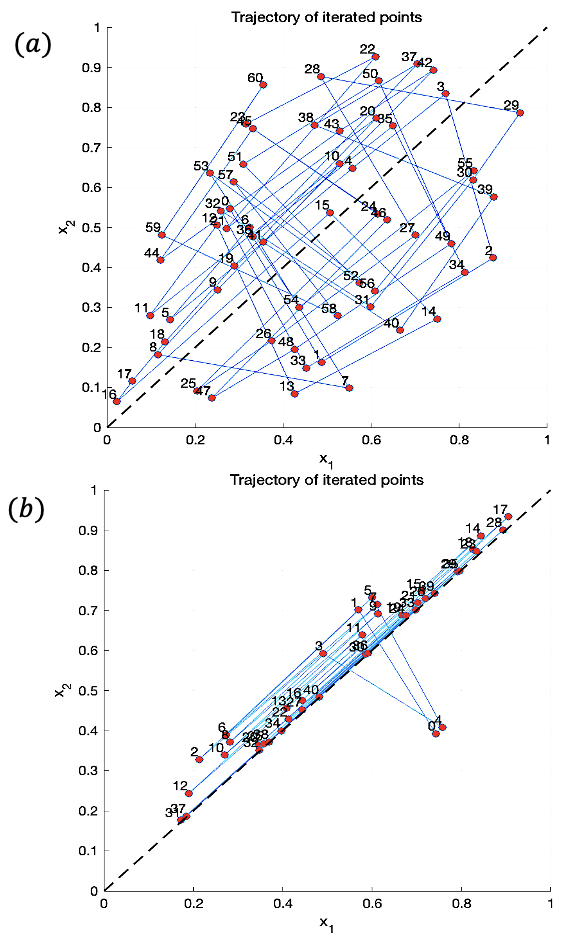}
	\caption{\label{Fig.2node1}The figures show the trajectory of any point in the phase space $[0,1]^{2}$ after iterations in the 2-node coupled Lorenz map lattice.
		The number above each red dot represents the number of iterations corresponding to the point.
		In panel (a), $c=0.15$ which shows intermittent-synchronization; and in panel (b), $c=0.28$ which shows synchronization.}
\end{figure}

Li et al.  \cite{li2021intermittent} proved that for the coupled map lattice (CML) with $d=2$ and $f$ being the standard tent map
	\begin{equation*}
		f(x) = \left\{
		\begin{array}{ll}
			2x, & 0 \leq x < \frac{1}{2}, \\
			2 - 2x, & \frac{1}{2} \leq x \leq 1,
		\end{array}
		\right.
	\end{equation*}
synchronization--intermittent synchronization  transition occurs.
More precisely, for almost every initial point in the phase space $[0, 1]^2$, intermittent-synchronization occurs  for any $c \in [0, \frac{1}{4}) \cup (\frac{3}{4}, 1]$ and for $c \in [\frac{1}{4}, \frac{3}{4}]$ synchronization occurs.
	This implies that $c = \frac{1}{4}$ (or $\frac{3}{4}$) is the critical threshold.

 While previous works depend on the analysis of Perron-Frobenius operator, the new method \cite{li2021intermittent} is a combination of geometry and combinatorics. Roughly speaking, it is based on the observation that the occurrence of synchronization or intermittent-synchronization depends on a competition of the expansibility of maps and the complexity of the dynamical system due to the fact that $f$ is not one-to-one. When the coupling is weaker than the critical coupling $\frac14$, Li et al. proved that the factor from the expansibility of maps is stronger and thus obtained a mixing property of CML, which implies intermittent-synchronization; otherwise, synchronization is obtained.

 It is worthy to say more on this method. Since the map is piecewise expanding, locally a small domain is mapped to another one with a larger Lebesgue measure.	On the other hand,  the map is not one-to-one. Thus one cannot easily tell whether or not a domain with an arbitrary small positive measure will grow into one with a measure large enough.
	In other words, for the growth of the measure of a domain under the repeated actions of a map, there is a competition between the expansibility of the map and the partition of system.
	If one can show that averagely the factor from the expansibility of the map surpasses the factor from  the non-injection of the map, then we can obtain a conclusion similar to topological mixing.

	In this paper, we aim to extend the synchronization--intermittent synchronization transition result to the following CML
	\begin{eqnarray}\label{model}
		T: \left\{
		\begin{array}{ll}
			x_1(n+1) = (1-c)f(x_1(n)) + c f(x_2(n)), \\
			x_2(n+1) = c f(x_1(n)) + (1-c) f(x_2(n)),
		\end{array}
		\right.
	\end{eqnarray}
	with piecewise linear expanding maps
	\begin{eqnarray}\label{f1f2}
	f(x)= 2x~ \text{mod} ~ 1 ~~~\text{or}~~~~ f(x)=\pm 3x ~ \text{mod} ~  1.
	\end{eqnarray}

	We will also adopt the geometry-combinatorics method \cite{li2021intermittent}. However, due to the discontinuity of the Lorenz-type maps, the corresponding CMLs are more difficult and the original method \cite{li2021intermittent} has to be modified significantly. Similarly, for CMLS with other piecewise linear expanding maps, the concrete difficulties are quite different.
		%Moreover, numerical simulation shows that the critical threshold is still	$\frac14$ for model (\ref{model}) with $f(x)= 2x~ \text{mod} ~ 1 $.
	%Thus it will be more difficult to prove the occurrence of intermittent-synchronization.

	It is worthy to point out that Jost and Joy \cite{jost2001spectral} explored the necessary and sufficient condition for the occurrence of the local synchronization.
	Our results showed that for many piecewise linear expanding maps on a compact domain, local synchronization is equivalent to global synchronization (see next section on the analysis of the Lyapunov exponent on the diagonal).

	Moreover, Tsujii \cite{tsujii2001absolutely} obtained  the existence of finitely many ACIMs for any piecewise linear expanding map on a compact domain, which is applicable for our two models in the intermittent-synchronization situation.
	Another purpose of this paper is to prove the uniqueness of ACIM with the mixing property of the corresponding CMLs based on the proof of intermittent-synchronization and the existence of finitely many ACIMs \cite{tsujii2001absolutely}.

%	Initiated by Bunimovich and Sinai \cite{bunimovich1988spacetime}, a seires  of results on the existence of absolutely continuous invariant measures (ACIMs) for weakly coupled expanding map lattices  have been obtained using the transfer operator \cite{keller1997mixing, keller1996coupled,keller1992some, keller1992transfer, saussol2000absolutely,liverani2013multidimensional,keller2006uniqueness}.
	%In particular, the uniqueness of ACIM  \cite{keller1992some} implied the transition  between synchronization and intermittent-synchronization occurs for CML with $d=2$, $f$ standard Lorenz type map and the critical threshold $\frac14$.

	\subsection{The motivation}
	This paper aims to show that a novel geometric-combinatorics method, together with the traditional transfer operator method (Perron-Frobenius operator), can yield a sharp result on the dynamical behavior of CMLs.

There are a series of celebrated works on the ergodic properties of CMLs with the method of Perron-Frobenius operator. In the weakly-coupling situation, Bunimovich et al. \cite{bunimovich1988spacetime,keller1997mixing, keller1996coupled, keller1992transfer, saussol2000absolutely,liverani2013multidimensional,keller2006uniqueness} and references therein obtained the existence of finitely many ACIMs in CMLs with piecewise expanding maps. It is worthy to note that these results depend heavily on the smallness condition  on the coupling strength. Moreover, Keller et al.\cite{keller1992some} also  proved the uniqueness of ACIM and synchronization--intermittent-synchronization transition in CML with $d=2$, $f$ standard (slope=$2$) Lorenz type map and the critical threshold $\frac14$ and their method seems difficult to be applied directly on $3$ or more nodes. In addition, Keller \cite{keller1997mixing} obtained the uniqueness of ACIM in CML with tent map and weak couplings (independent on the size) and the smallness of coupling seems to be necessary in his proof.
 Tsujii \cite{tsujii2001absolutely} showed the existence of finitely many ACIMs for general expanding piecewise linear maps. However, these are far from a kind of sufficient-necessary result which was supported by our numerical simulations. In fact, even expanding condition may not be necessary for the existence and uniqueness of ACIM in CMLs with piecewise linear maps. To bridge this gap, a fresh research paradigm seems to be necessary.
	
	It is well known that the CML with identical maps is symmetric with respect to the synchronization manifold (the diagonal). It implies that the diagonal must play a key role in the dynamical behavior of CMLs, which is beyond the consideration of abstract functional sketch\cite{bunimovich1988spacetime,keller1997mixing, keller1996coupled,keller1992some, keller1992transfer, saussol2000absolutely,liverani2013multidimensional,keller2006uniqueness}.. In fact, the information from the diagonal is necessary in the sufficient-necessary result of Keller et al \cite{keller1992some}. This implies the importance of systemically developing a  geometric method.
	In this paper, we develop the geometric-combinatorics method originated in \cite{li2021intermittent} to study the role of the diagonal on  CMLs with the map as in (\ref{f1f2}), where the discontinuity of the map complicates the study. Consequently we obtain the existence of a threshold on the coupling strength for the occurrence of intermittent-synchronization for CML, which coincides with numerical results. The occurrence of intermittent-synchronization then help us obtain some kind of mixing property, which together with the existence of finitely many ACIMs \cite{tsujii2001absolutely}, implies the uniqueness of ACIM. It together with
Tsujii's result \cite{tsujii2001absolutely} shows that the trajectory of almost each point in the sense of Lebesgue measure will visit almost everywhere of a region with a positive Lebesgue measure for infinite times.

Moreover,  the analysis of the Lyapunov exponent on the diagonal\cite{jost2001spectral} (see next section) also shows the occurrence of local synchronization-asynchronization transition. Combining this with our intermittent synchronization result,  in this paper we actually obtained a sufficient-necessary result for the occurrence of intermittent synchronization and the uniqueness of ACIM since local synchronization implies the nonexistence of ACIM and intermittent synchronization.  Indeed, all our other works based on the geometric-combinatorics method are of sufficient-necessary results as follows. More precisely, we obtained the synchronization--intermittent synchronization transition and the uniqueness of ACIM for the following different situations: coupled nonstandard tent maps (the absolute value of the slope is less that $2$) lattice with $2$ nodes\cite{wang2025synchronization},  coupled standard tent map lattice  with $3$ nodes \cite{wang2025threenode}, and global-coupled standard tent map lattices with any finitely many nodes \cite{wang2025threenode}. It is worthy to note that the details in the results mentioned above are much more complicated than in \cite{li2021intermittent} where the size is $2$ and the absolute value of the slope is $2$.
In comparison, there are very few theoretical results on the existence of a critical threshold for dynamical behaviors of CMLs. Two related results can be found by Keller for coupled Lorenz type map lattices with $2$ nodes, $f$ standard (slope=$2$) Lorenz type map \cite{keller1992some} and Maere \cite{de2010phase}.

	Furthermore, intermittent-synchronization may provide a connection between the uncoupled single dynamical system and the dynamical system of CML. In fact, intermittent-synchronization means that almost every orbit can stay in any neighbor of the diagonal and thus shadows a typical orbit on the diagonal for any finitely long time (which implies that the diagonal is in the support of any ACIM). However, the system on the diagonal is equivalent to the product of uncoupled single dynamical systems.  Hence it seems reasonable to consider the problem that how much information on the dynamical behavior of CMLs can be obtained from the uncoupled single dynamical system. For example, both the uncoupled single system (\ref{f1f2}) and their CML counterparts are topological mixing. It suggests some similarity between them.

	Note that for coupled tent map lattices, the proofs for $2$ and $3$ nodes are quite different. This is due to the fact that the situation for different size may be different. In fact, in the case of nearest-neighbour coupling tent map lattices, the necessary condition for the occurrence of the synchronization- intermittent synchronization transition is the number of nodes is less than $6$ for the following reason.
One can easily check that (local) synchronization cannot occur with the formula on the eigenvalue of a Laplacian matrix if the size is larger than $6$, since on the diagonal line (the synchronized manifold), there always exists at least one eigenvector orthogonal to the diagonal line with an eigenvalue larger than $1$.
  This implies that the proof may depends on the size and new difficulties may exist for larger size. In fact, the proof for coupled tent map lattices with $3$ nodes is more complicated than the one for $2$ nodes.

  Potentially our method may be applicable in the study of nearest neighbour coupling map lattices with any finite many nodes.
  These  are considered in one of our ongoing work.
 For the $n$-node and non-weakly coupling case, the key point is to control the growth speed of the number of convex subdomains since there are more discontinuity spaces in higher dimension than in $2$ dimension.
 The success in coupled Lorenz map lattice with $2$ nodes and coupled tent map lattice with $3$ nodes lies in the existence sufficiently many invariant hyperplanes by which we can reduce the problem to lower dimensions.
 n general cases, however there are no sufficiently many invariant hyperplanes. Thus we need to find other (quasi)invariants to control the growth speed of  $\#\{\text{convex subdomains}\}$.

 Another future objective of us is to generalize the geometric-combinatorics method to maps with a nonlinearity much stronger than mentioned above and thus show that uniformly expanding is not necessary for the occurrence of intermittent synchronization and the existence of ACIM. Currently, some partial result has been obtained.
	
Hopefully, we can also consider other coupling topologies besides the nearest-neighbor coupling with our method, say long-range coupling, global coupling, etc, which can be found frequently in other models, e.g., discrete Schr\"odinger operators\cite{Damanik2022operator}.

	%Additional relevant findings can be found in \cite{keller2006uniqueness, keller2009rare,jost2001spectral} and the associated references.
	\subsection{The structure of the paper}
In Sec. \ref{Sec:main results}, we elaborate on our two main theorems.
In Sec. \ref{sec:2}, we introduce the iterative lemma, which serves as a prerequisite for analyzing intermittent-synchronization in CML using the geometric-combinatorics method.
In Sec. \ref{sec:3} and Sec. \ref{sec:4}, we proof the phenomenon of intermittent-synchronization  and the uniqueness of ACIM in  two-node CML with two kinds of piecewise linear expanding maps respectively.
Using the geometric-combinatorics method, we  prove the intermittent-synchronization result rigorously and obtain the complete range of coupling strengths for which intermittent-synchronization occurs in CML, then we get the uniqueness of absolutely continuous invariant measure for our model (\ref{model}).
Notably, our geometric-combinatorics method exhibits strong applicability to multi-node CMLs and other coupled piecewise expanding maps.
Therefore, in Sec. \ref{sec:5}, we extend this approach to various CMLs and compare it with traditional Forbenius-Perron operator-based methods.
Section \ref{sec:6} concludes.

\section{Main results}\label{Sec:main results}

In this paper, we primarily present and prove the following two theorems.

\begin{theorem}\label{Th1}
	Consider the coupled map lattice
	\begin{eqnarray*}
		T: \left\{
		\begin{array}{ll}
			x_1(n+1) = (1-c)f(x_1(n)) + c f(x_2(n)), \\
			x_2(n+1) = c f(x_1(n)) + (1-c) f(x_2(n)),
		\end{array}
		\right.
	\end{eqnarray*}
	with standard Lorenz map
	\[
	f(x) = \left\{
	\begin{array}{ll}
		2x, & 0 \leq x < \frac{1}{2}, \\
		2x - 1, & \frac{1}{2} \leq x \leq 1.
	\end{array}
	\right.
	\]
	Then $c=\frac{1}{4}$ is the synchronization--intermittent synchronization transition point, that is,
	for all $c \in [0, \frac{1}{4}) \cup (\frac{3}{4}, 1]$, there exists a constant $r_0 > 0$ such that for almost every initial point $(x_1(0), x_2(0)) \in [0, 1]^2$, the following hold:
	\begin{eqnarray*}
		\left\{
		\begin{array}{ll}
			\liminf_{n \to \infty} \, dist(\bm{x}(n), S_{\text{inv}}) = 0 , \\
			\limsup_{n \to \infty} \, dist(\bm{x}(n), S_{\text{inv}}) \geq r_0 > 0 ;
		\end{array}
		\right.
	\end{eqnarray*}
	and for each $\frac{1}{4} < c < \frac{3}{4}$,
	\[
	\lim_{n \to \infty} \, dist(\bm{x}(n), S_{\text{inv}}) = 0
	\]
	holds for  every initial point.
	
	Moreover, when $c \in [0, \frac{1}{4}) \cup (\frac{3}{4}, 1]$, there exist a unique absolutely continuous invariant measure for $T$.
\end{theorem}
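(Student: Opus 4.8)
\emph{Approach.} The proof splits at the threshold $c=\tfrac14$, with the range $c\in(\tfrac34,1]$ reduced to $c\in[0,\tfrac14)$ only at the very end. Put $s=x_1+x_2$, $u=x_1-x_2$ and write $f(x)=2x-\mathbf 1_{[1/2,1]}(x)$; a direct computation gives $u(n+1)=(1-2c)\bigl(f(x_1(n))-f(x_2(n))\bigr)=(1-2c)\bigl(2u(n)-\beta(n)\bigr)$, where $\beta=\mathbf 1_{[1/2,1]}(x_1)-\mathbf 1_{[1/2,1]}(x_2)\in\{-1,0,1\}$ and $\beta=0$ — equivalently $x_1,x_2$ lie in a common linearity branch of $f$ — unless $x_1,x_2$ straddle $\tfrac12$, in which case both are within $|u|$ of $\tfrac12$. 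Thus off this straddling locus $|u(n+1)|=2|1-2c|\,|u(n)|$, so the transversal Lyapunov exponent along $S_{\text{inv}}$ equals $\log\bigl(2|1-2c|\bigr)$: negative exactly for $\tfrac14<c<\tfrac34$, positive outside $[\tfrac14,\tfrac34]$. This is the Jost--Joy criterion~\cite{jost2001spectral}, which already singles out $c=\tfrac14,\tfrac34$. For $\tfrac14<c<\tfrac34$ I would promote the pointwise transversal contraction to $\mathrm{dist}(\bm{x}(n),S_{\text{inv}})\to0$ by coding orbits relative to the line $x=\tfrac12$: a straddling step can enlarge $|u|$ momentarily, but only up to $|1-2c|<\tfrac12$, and such steps are too sparse along an orbit to overcome the geometric contraction, the exceptions forming a Lebesgue--null set (the stable sets of the off--diagonal periodic points of $T$; e.g.\ $(\tfrac37,\tfrac47)$ is a fixed point off $S_{\text{inv}}$ when $c=\tfrac35$, with multipliers $2$ and $2(1-2c)$).

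\emph{The covering property.} For $c\in[0,\tfrac14)$ the map $T$ is uniformly expanding: on each of the four sub-squares cut out by $x_1=\tfrac12$ and $x_2=\tfrac12$ it equals $M_c\circ\bm{f}$ with $\bm{f}$ affine of derivative $2I$ and $M_c=I+cA$ linear, so $DT=2M_c$ has multipliers $2$ and $2(1-2c)$, both of modulus $>1$; moreover $M_c$ sends $[0,1]^2$ onto the parallelogram $P$ of area $1-2c$ straddling $S_{\text{inv}}$, so every branch of $T$ has constant Jacobian $4(1-2c)>2$. The crux is a covering property: there is a fixed open set $R$ that meets every neighborhood of $S_{\text{inv}}$ and for which, for every nonempty open $U$, there is $N(U)$ with $T^{N(U)}(U)\supseteq R$. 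To prove it I would follow the geometric--combinatorics scheme of~\cite{li2021intermittent}: track each forward image $T^n(Q)$ of a small square as a finite union of convex polygons, at each step cutting every polygon along $x_i=\tfrac12$ into at most four pieces, each then blown up affinely by $4(1-2c)$ and sheared by $M_c$. The whole argument rests on a combinatorial estimate that keeps the growth rate of the number of convex pieces strictly below the area factor $4(1-2c)$, so that $\mathrm{Leb}\bigl(T^n(Q)\bigr)$ keeps increasing until the image must engulf $R$; the structural handle is the piecewise--affine rigidity of $T$ together with the invariant line $\{x_1+x_2=1\}$ and the family of dyadic lines $T$ permutes, which organizes the bookkeeping and reduces it — dimension by dimension — to the one--dimensional recursions for $s$ and $u$.

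\emph{Uniqueness of the ACIM and intermittent synchronization.} By Tsujii's theorem~\cite{tsujii2001absolutely} for piecewise linear expanding maps, $T$ has finitely many ergodic ACIMs with densities of bounded variation, whose basins partition $\Lambda:=\bigcap_{n\ge0}T^n([0,1]^2)$ modulo Lebesgue; in particular $\mathrm{Leb}(\Lambda)>0$. If there were two distinct ergodic ACIMs, their basins $B_1,B_2$ would be disjoint, forward--invariant and of positive Lebesgue measure; picking a density point of each gives small squares $Q_i$ essentially inside $B_i$, and the covering property yields $N_i$ with $T^{N_i}(Q_i)\supseteq R$, forcing $R\subseteq B_1\cap B_2$ modulo Lebesgue — a contradiction. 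Hence the ACIM is unique; call it $\mu$, with density $h$. A standard argument from the covering property (apply it to a small square where $h>0$ and use that $h$ is a fixed point of the transfer operator $\CL$) shows $h>0$ a.e.\ on $R$, so $\mathrm{supp}(\mu)\supseteq\overline R$; since $R$ meets every neighborhood of $S_{\text{inv}}$, $\mu$ charges every such neighborhood, and since $\mu\ll\mathrm{Leb}$ with $\mu(S_{\text{inv}})=0$ it also charges $\{\mathrm{dist}(\cdot,S_{\text{inv}})\ge r_0\}$ for a suitable $r_0>0$. As Lebesgue--a.e.\ point lies in the basin of $\mu$, Birkhoff's ergodic theorem gives $\liminf_n|x_1(n)-x_2(n)|=0$ and $\limsup_n|x_1(n)-x_2(n)|\ge r_0$ for Lebesgue--a.e.\ initial point. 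Finally $c\in(\tfrac34,1]$ reduces to $c\in[0,\tfrac14)$: writing $\sigma$ for the coordinate swap one has $M_{1-c}=\sigma M_c$, hence $T_{1-c}^2=T_c^2$; since $|x_1-x_2|$ is $\sigma$--invariant, every $T_{1-c}$--orbit has the same $\liminf/\limsup$ statistics as the corresponding $T_c$--orbit, and the unique $T_c$--ACIM is also the unique $T_{1-c}$--ACIM.

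\emph{Main obstacle.} The real difficulty — and why the method of~\cite{li2021intermittent} must be substantially reworked rather than cited — is the combinatorial estimate in the covering property. For the continuous tent map, the image of a connected polygon stays connected ``up to the fold'', which tames the proliferation and overlapping of pieces; for the \emph{discontinuous} Lorenz map, a polygon meeting $x_i=\tfrac12$ is split into two pieces whose images land at opposite ends of $[0,1]$ (since $f(\tfrac12^-)=1$ but $f(\tfrac12^+)=0$), so the two halves can be neither recombined nor treated as a perturbation of a connected image, and the positions, overlaps, and gaps of the convex subdomains must be tracked much more carefully. Carrying this out so that the number of convex subdomains provably grows slower than $4(1-2c)$, uniformly as $c\uparrow\tfrac14$, is the technical heart of the argument.
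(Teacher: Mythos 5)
Your architecture matches the paper's in outline (geometric--combinatorics to establish a covering/mixing property near the diagonal, then Tsujii's finiteness theorem to get uniqueness of the ACIM), and some of your reductions are cleaner than the paper's: the Birkhoff argument deriving both $\liminf=0$ and $\limsup\ge r_0$ from positivity of the density on a single open set $R$ straddling $S_{\text{inv}}$ replaces the paper's separate slope-$(-1)$-segment argument (its Lemmas on $D_{r_0}$), and your explicit $M_{1-c}=\sigma M_c$, $T_{1-c}^2=T_c^2$ reduction makes precise what the paper dismisses as ``symmetrically.'' Your observation that $(\tfrac37,\tfrac47)$ is an off-diagonal fixed point at $c=\tfrac35$ is also correct and shows the theorem's ``for every initial point'' in the regime $\tfrac14<c<\tfrac34$ cannot be taken literally; the paper itself only cites Keller for attraction of Lebesgue-a.e.\ orbit there.

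However, there is a genuine gap, and you name it yourself: the covering property is asserted, its difficulty is diagnosed, but the combinatorial estimate that would prove it is never carried out. That estimate is not a routine verification --- it is the entire content of the paper's Lemmas 9 through 14. Concretely, the paper must (i) handle separately convex sets of small diameter (at most $2$ new components per step, so $a=2<4(1-2c)$ suffices in the Iterative Lemma) versus long thin sets, for which a naive count gives up to $4$ pieces per step and would \emph{exceed} the area factor $4(1-2c)$ as $c\uparrow\tfrac14$; the resolution is a two-type bookkeeping ($M_i$ ``long'' and $m_i$ ``short'' components with $M_i\lesssim 2^{i+1}d_l/\epsilon$ and $m_i\le 2m_{i-1}+2M_{i-1}$), exploiting that the total length of the ``good edges'' grows at most like $2^{i+1}$ while the expansion of measure is $(4(1-2c))^i$; (ii) show that a set of definite size must reach $(\tfrac12,\tfrac12)$, which uses the invariant line $\{x_1+x_2=1\}$ and a convexity argument (Lemma 9) to convert ``meets all four quadrants'' into ``a definite fraction lies in $O_\epsilon$''; and (iii) for the segment argument, rule out repeated three-fold splitting near $O_{r_0}$ by tracking orbits through the fixed points $(0,0)$ and $(1,1)$. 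None of this is present in your proposal, and without it neither the covering property nor (hence) the uniqueness of the ACIM nor the intermittent-synchronization statement is established. A secondary, smaller gap: your claim that in the regime $\tfrac14<c<\tfrac34$ the straddling steps are ``too sparse'' to spoil contraction is not argued at all (a straddling step occurs precisely when $|u|>|1-2x_1|$-type conditions hold, and its frequency along an orbit is not obviously controllable without the coding you allude to); the paper avoids this by citing Keller, but if you intend to prove it you must supply the coding argument.
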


The panel (a) of Fig.  \ref{Fig.2node1}  and the red line of Fig.  \ref{Fig.2node2}  illustrate the intermittent-synchronization phenomenon of the coupled Lorenz map lattice from two perspectives,
while the panel (b) of Fig.  \ref{Fig.2node1} and the blue line of Fig. \ref{Fig.2node2}  depict the synchronization phenomenon of the coupled Lorenz map lattice.

Note that when $\frac{1}{4} < c < \frac{3}{4}$, Keller\cite{keller1992some} has concluded that the diagonal attracts Lebesgue-a.e. orbit. Thus, we can deduce that synchronization occurs for each $\frac{1}{4} < c < \frac{3}{4}$, that is,
\[
\lim_{n \to \infty} \, dist(\bm{x}(n), S_{inv}) = 0
\]
for every initial point.
Therefore, according to the theorem we obtained, we can conclude that $c = \frac{1}{4}$ ($c = \frac{3}{4}$ symmetrically) is the synchronization--intermittent synchronization transition point.

\begin{figure}
	\includegraphics[width=0.49\textwidth]{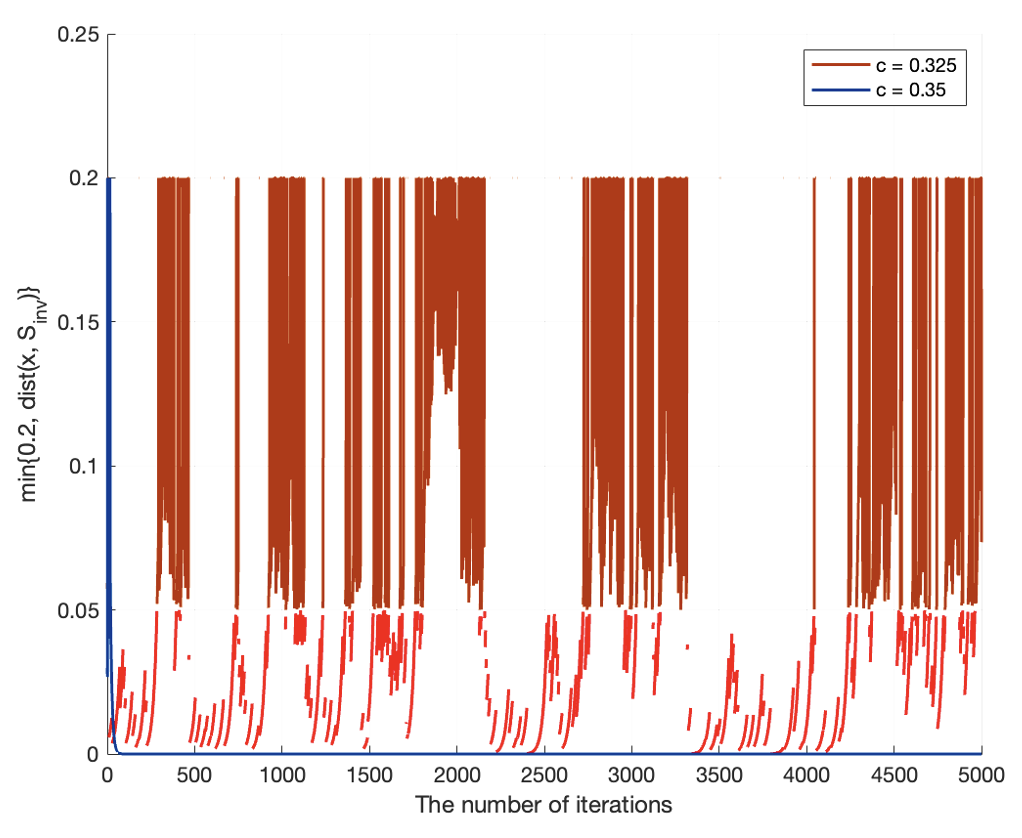}
	\caption{\label{Fig.2node2} The figure shows the distance of any point in the phase space $[0,1]^{2}$ to the invariant set $S_{inv}=\{\bm{x}\in[0,1]^{2}| x_{1}=x_{2}\}$  in the 2-node coupled map lattice with $f(x)=3x \mod 1$.
		We only show the part with the distance less than 0.2 in this figure.
		The red line corresponds to $c=0.325$ which shows intermittent-synchronization, we distinguish distances greater than $0.05$ and less than $0.05$ using different red colors; and the blue line corresponds to $c=0.35$ which shows synchronization.}
\end{figure}

%In order to confirm our conclusion theoretically, we now analysis the Lyapunov exponents of the diagonal invariant set $S_{inv}$.
We demonstrate that our synchronization--intermittent synchronization transition conclusion coincides with the transition of  the Lyapunov exponents \cite{jost2001spectral} of the diagonal invariant set \( S_{inv} \).
The points on the diagonal satisfy \( x_1 = x_2 \), so the iterations of this CML along the diagonal is governed by
\[
x_{n+1} = T(x_n, x_n) = f(x_n),
\]
which depends solely on the Lorenz map  \( f(x) \).
The Lyapunov exponent along the diagonal is given by
\[
\lambda_{\parallel} = \lim_{N \to \infty} \frac{1}{N} \sum_{n=0}^{N-1} \ln | f'(x_n) |.
\]
Since the slope of the Lorenz map is constantly 2,  we obtain
\[
\lambda_{\parallel} = \ln 2.
\]

To determine the Lyapunov exponent in the transverse direction, we consider a small perturbation \( \delta_n = x_1 - x_2 \) and by using the iterative equations of CML, we approximate
\[
\delta_{n+1} = ((1-c) - c) f'(x_n) \delta_n = (1-2c) f'(x_n) \delta_n.
\]
Thus, the Lyapunov exponent in the transverse direction is
\[
\lambda_{\perp} = \lim_{N \to \infty} \frac{1}{N} \sum_{n=0}^{N-1} \ln |(1-2c) f'(x_n)|= \ln |2(1-2c) |.
\]

Therefore, the Lyapunov exponents of~$S_{inv}$~are
\[
\lambda_{\parallel} = \ln 2, \quad \lambda_{\perp} = \ln |2(1-2c) |.
\]
For \( c \in (0, \frac{1}{4}) \),  \( \lambda_{\perp} = \ln |2(1-2c) | > 0 \), perturbations in the transverse direction grow exponentially, indicating that the trajectory on the diagonal is transversely unstable.
In other words, even if the initial point is arbitrarily close to the synchronized state, small random perturbations will cause it to rapidly diverge from the synchronization manifold.
%Therefore, for \( c \in (0, \frac{1}{4}) \), intermittent-synchronization occurs for almost every initial condition in the phase space \( [0,1]^{2} \).
In contract, for  $c\in (\frac{1}{4},\frac{3}{4})$,  \( \lambda_{\perp} = \ln |2(1-2c) | <0 \), any  point through iterations will gradually get infinitely closer to $S_{inv}$ and never move away from it, indicating that synchronization occurs for $c\in (\frac{1}{4},\frac{3}{4})$.
Consequently,~$c=\frac{1}{4}$~is the critical value between these two different phenomena.
% the phase transition point between synchronization and intermittent-synchronization.

%---------------------------------------------------
\begin{figure}
	\includegraphics[width=0.49\textwidth]{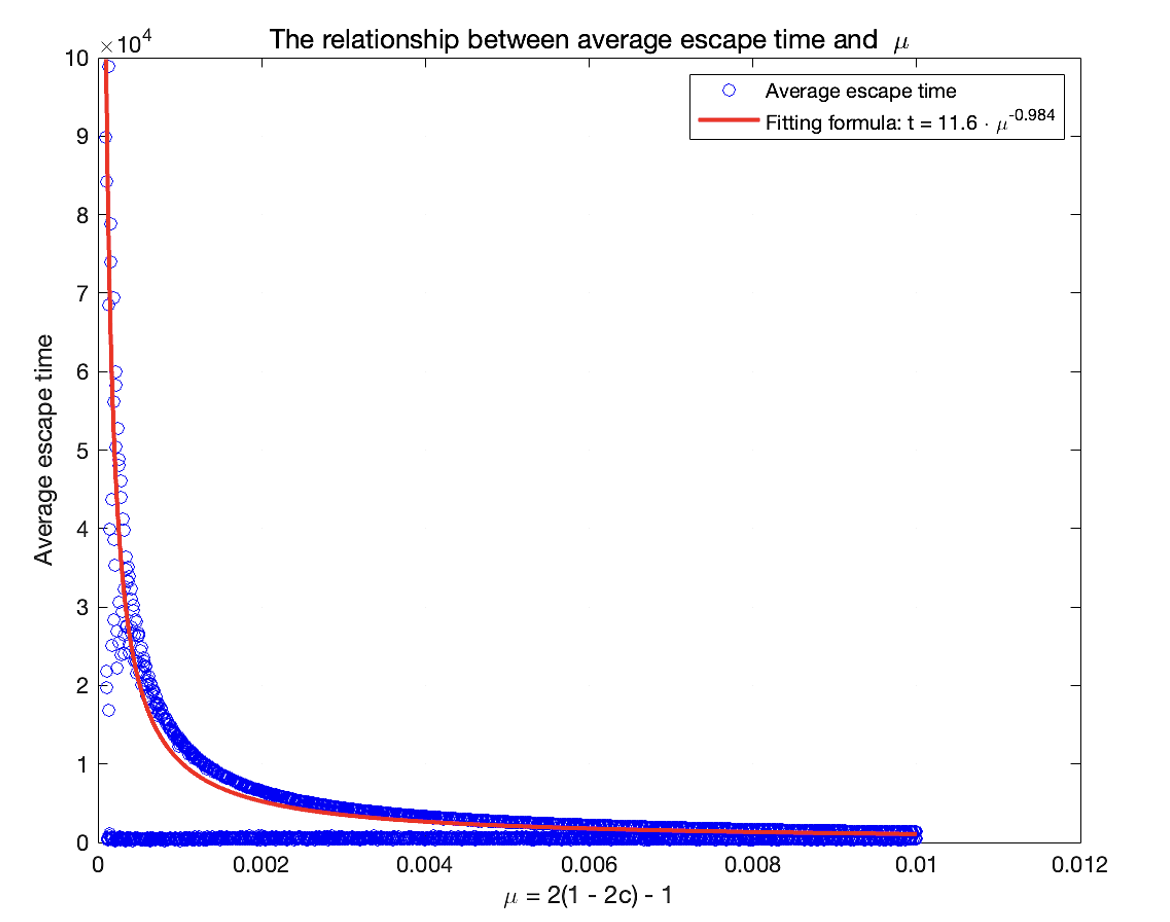}
	\caption{\label{Fig.escapetime} The figure shows that  the average escape time from any point in the $10^{-12}$-neighborhood of the diagonal $S_{inv}$ to outside the $10^{-6}$-neighborhood of $S_{inv}$ is approximately inversely proportional to the distance between the coupling strength c and $1/4$.}
\end{figure}

Through numerical simulation, we find that the average escape time from any point in the $10^{-12}$-neighborhood of the diagonal $S_{inv}$ to outside the $10^{-6}$-neighborhood of $S_{inv}$ is approximately inversely proportional to the distance between the coupling strength c and $1/4$, as shown in Fig. \ref{Fig.escapetime}.
The closer the coupling strength c is to $1/4$, the longer the average escape time.

%------------------------------------------------
\begin{theorem}\label{Th2}
	Consider the coupled map lattice
	\begin{eqnarray*}
		T: \left\{
		\begin{array}{ll}
			x_1(n+1) = (1-c)f(x_1(n)) + c f(x_2(n)), \\
			x_2(n+1) = c f(x_1(n)) + (1-c) f(x_2(n)),
		\end{array}
		\right.
	\end{eqnarray*}
	with the map in the following different forms (which are shown in Fig. \ref{3xmod1}):
	\begin{enumerate}
		\item $$	f(x) = \left\{
		\begin{array}{ll}
			3x, & 0 \leq x <1/3 , \\
			3x-1, & 1/3 \leq x < 2/3, \\
			3x - 2, & 2/3 \leq x \leq 1.
		\end{array}
		\right.  $$
		
		\item $$	f(x) = \left\{
		\begin{array}{ll}
			-3x+1, & 0 \leq x \leq  1/3, \\
			-3x+2, &  1/3 < x \leq  2/3, \\
			-3x +3, & 2/3 < x \leq 1.
		\end{array}
		\right.  $$
		%	\item $$	f(x) = \left\{
		%	\begin{array}{ll}
			%			3x, & 0 \leq x < 1/3, \\
			%		3x-1, &  1/3 \leq x < 2/3, \\
			%	-3x +3, & 2/3 \leq x \leq 1.
			%	\end{array}
		%		\right.  $$
		
		%		\item $$	f(x) = \left\{
		%		\begin{array}{ll}
			%		3x, & 0 \leq x <1/3 , \\
			%	-3x+2, & 1/3 \leq x < 2/3, \\
			%		-3x +3, & 2/3 \leq x \leq 1.
			%	\end{array}
		%		\right.  $$
		
		%	\item $$	f(x) = \left\{
		%		\begin{array}{ll}
			%		-3x+1, & 0 \leq x < 1/3, \\
			%	3x-1, &  1/3 \leq x < 2/3, \\
			%		3x -2, & 2/3 \leq x \leq 1.
			%	\end{array}
		%	\right.  $$
		
		%	\item $$	f(x) = \left\{
		%	\begin{array}{ll}
			%	-3x+1, & 0 \leq x <1/3 , \\
			%		-3x+2, & 1/3 \leq x < 2/3, \\
			%	3x - 2, & 2/3 \leq x \leq 1.
			%	\end{array}
		%	\right.  $$
	\end{enumerate}
	
	Then $c=\frac{1}{3}$ ($c=\frac{2}{3}$ symmetrically) is the synchronization--intermittent synchronization transition point, that is,
	for all $c \in [0, \frac{1}{3}) \cup (\frac{2}{3}, 1]$, there exists a constant $r_0 > 0$ such that for almost every initial point $(x_1(0), x_2(0)) \in [0, 1]^2$, the following hold:
	\begin{eqnarray*}
		\left\{
		\begin{array}{ll}
			\liminf_{n \to \infty} \, dist(\bm{x}(n), S_{\text{inv}}) = 0 , \\
			\limsup_{n \to \infty} \, dist(\bm{x}(n), S_{\text{inv}}) \geq r_0 > 0 ;
		\end{array}
		\right.
	\end{eqnarray*}
	and for  $c\in (\frac{1}{3},\frac{2}{3})$,
	\[
	\lim_{n \to \infty} \, dist(\bm{x}(n), S_{\text{inv}}) = 0
	\]
	holds	for  every initial point.
	
	Moreover, when $c \in [0, \frac{1}{3}) \cup (\frac{2}{3}, 1]$, there exist a unique absolutely continuous invariant measure for $T$.
\end{theorem}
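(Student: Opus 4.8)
The plan is to run the geometric–combinatorics scheme in the same spirit as the proof of Theorem~\ref{Th1}, the genuinely new features being the three branches of $f$ and, for the map of item~(2), the negative slope. First I would pass to the coordinates $u=x_1-x_2$, $v=x_1+x_2$, in which the discontinuity lines $x_1\in\{1/3,2/3\}$ and $x_2\in\{1/3,2/3\}$ cut $[0,1]^2$ into nine convex cells. A direct computation gives that on each cell $T$ is affine with linear part $\mathrm{diag}\big(3(1-2c),\,3\big)$ for the map of item~(1) and $\mathrm{diag}\big(-3(1-2c),\,-3\big)$ for that of item~(2); equivalently $u\mapsto\pm 3(1-2c)u+\mathrm{const}$ and $v\mapsto\pm 3v+\mathrm{const}$, the constants depending only on the cell. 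Hence for $c\in[0,\tfrac13)\cup(\tfrac23,1]$ both eigenvalues have modulus larger than $1$, so $T$ is a piecewise linear expanding map of the compact set $[0,1]^2$; Tsujii's theorem~\cite{tsujii2001absolutely} together with the standard theory of such maps then furnishes finitely many ergodic ACIMs $\mu_1,\dots,\mu_m$, each equivalent to Lebesgue measure on an open (mod $0$) forward-invariant set $\Omega_i$, with Lebesgue-almost every orbit generic for one of them. For $c\in(\tfrac13,\tfrac23)$ one has instead $|3(1-2c)|<1$.

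The synchronization direction, $c\in(\tfrac13,\tfrac23)$, is short and gives the statement for \emph{every} initial point and for both maps of the theorem. Since $f$ maps $[0,1]$ into itself, $|u(n)|=|1-2c|\,|f(x_1(n-1))-f(x_2(n-1))|\le|1-2c|<\tfrac13$ for all $n\ge1$, so one may assume $|u(n)|<\tfrac13$ throughout. At each step either $f(x_1(n))$ and $f(x_2(n))$ lie on a common branch of $f$, so that $|u(n+1)|=3|1-2c|\,|u(n)|$ with $3|1-2c|<1$, or they straddle one discontinuity, so that $|u(n+1)|=|1-2c|\,|3u(n)+\sigma|$ for some $\sigma\in\{-1,1\}$; in the latter case $|u(n+1)|$ is bounded below by a fixed positive constant only when $|u(n)|$ is close to $\tfrac13$. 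A short analysis of how such straddles can recur then forces $|u(n)|\to 0$, i.e. $dist(\bm{x}(n),S_{\text{inv}})\to 0$, for every initial point.

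The core is the intermittent-synchronization direction for $c\in[0,\tfrac13)\cup(\tfrac23,1]$, which rests on an iterative lemma in the spirit of Sec.~\ref{sec:2} adapted to the present maps. The statement to prove is: there exist $N\in\mathbb{N}$ and $\rho_0>0$ (both allowed to depend on $c$) such that any convex domain $D$ which meets the $\rho_0$-neighborhood of $S_{\text{inv}}$ and whose transverse ($u$-)width is at least a fixed $w_0>0$ has a convex component of $T^N(D)$ that contains a $\rho_0$-ball meeting $S_{\text{inv}}$. The mechanism is the competition described in the introduction, and here it is favourable. After one iteration, the part of a thin tube around a diagonal segment that lies in a diagonal cell $[\tfrac{i-1}3,\tfrac i3)^2$ is stretched onto a tube around the \emph{whole} diagonal, with its $u$-width multiplied by $|3(1-2c)|>1$ and its $v$-extent by $3$; the four discontinuity lines then re-cut this tube near only the two corner points $(\tfrac13,\tfrac13)$, $(\tfrac23,\tfrac23)$ into a bounded number of diagonal sub-tubes, and because these cuts meet the tube at $45^{\circ}$ they only shorten the sub-tubes, not narrow them. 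Thus, as long as the $v$-extent stays large compared with the current $u$-width, some component keeps the full $u$-width, so the $u$-width is multiplied by $|3(1-2c)|$ at \emph{every} step regardless of the cutting; after $N=O\big(\log(1/w_0)\big)$ steps it is therefore macroscopic and the component is a tube around a whole diagonal segment of diameter $\ge\rho_0$ meeting the (invariant) diagonal. Feeding this back gives the covering lemma: there is a fixed open ball $V_0$ meeting $S_{\text{inv}}$ such that for every nonempty open $U\subset[0,1]^2$ one has $T^n(U)\supseteq V_0$ for some $n$ — here one first uses that the $u$-extent of some component of $T^k(U)$ grows by the factor $|3(1-2c)|>1$ until it spans a definite range of values of $u$, in particular values near $0$, after which the iterative lemma applies to the slice of that component with $|u|\le\rho_0$.

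Two consequences then finish the proof. First, uniqueness of the ACIM: if $\mu_i\ne\mu_j$ were distinct ergodic ACIMs, their supports $\overline{\Omega_i}$, $\overline{\Omega_j}$ are forward-invariant, and applying the covering lemma to small balls inside $\Omega_i$ and inside $\Omega_j$ shows that both contain $V_0$ mod $0$, contradicting $\Omega_i\cap\Omega_j=\emptyset$ mod $0$; hence $m=1$, and the unique ACIM $\mu$ is ergodic and equivalent to Lebesgue measure on an open set containing $V_0$. Second, intermittent synchronization: since $V_0$ is open, meets $S_{\text{inv}}$, and $\mu\equiv\mathrm{Leb}$ on $V_0$, one has $\mu(\{dist(\cdot,S_{\text{inv}})<\varepsilon\})>0$ for every $\varepsilon>0$ and $\mu(\{dist(\cdot,S_{\text{inv}})\ge r_0\})>0$ for $r_0$ a fixed fraction of the radius of $V_0$; taking $\varepsilon=1/k$, $k\in\mathbb{N}$, and using that Lebesgue-a.e. orbit is generic for $\mu$, the Birkhoff ergodic theorem yields $\liminf_n dist(\bm{x}(n),S_{\text{inv}})=0$ and $\limsup_n dist(\bm{x}(n),S_{\text{inv}})\ge r_0$ for Lebesgue-almost every initial point. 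The map of item~(2) is treated identically, since only $|f'|=3$ enters the estimates, the negative slope merely producing orientation-reversing affine pieces. I expect the main obstacle to be the iterative lemma itself — specifically the exact bookkeeping of the cutting near the two corner points $(\tfrac13,\tfrac13)$, $(\tfrac23,\tfrac23)$ and, for item~(2), of the orientation-reversing branches — because, as stressed in the introduction, the whole argument depends on showing that expansion beats the growth of the number of convex subdomains, which is genuinely more delicate here (four discontinuity lines, nine cells) than in Theorem~\ref{Th1} (two lines, four cells).
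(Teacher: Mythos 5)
Your overall architecture matches the paper's: a geometric--combinatorics covering lemma, Tsujii's finiteness theorem for ACIMs, and a transitivity argument forcing all basins to share a common open set. Your ergodic finish (prove uniqueness first, then obtain the $\liminf$/$\limsup$ statements from Birkhoff applied to $\{\mathrm{dist}(\cdot,S_{inv})<\epsilon\}$ and $\{\mathrm{dist}(\cdot,S_{inv})\ge r_0\}$, both of positive $\mu$-measure because $\mu$ is equivalent to Lebesgue on $V_0$) is a cleaner route to the intermittency statement than the paper's direct measure-exhaustion argument, and it would be correct \emph{granted} the covering lemma.

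The genuine gap is exactly the step you flag as ``the main obstacle'': the iterative lemma is asserted, and the heuristic you offer for it does not address the actual difficulty. Your tube picture only treats sets that are already thin neighborhoods of diagonal segments, for which the cuts at $x_i\in\{1/3,2/3\}$ indeed only shorten and do not narrow. But to conclude anything for a.e.\ initial point you must handle an arbitrary small convex set $S$ anywhere in $[0,1]^2$: under iteration its image is sliced into a number of components that can grow like $C\cdot 3^i$ (each step triples the extent along the long direction, producing that many new crossings of the four discontinuity lines), while the total measure grows only like $(9(1-2c))^iM(S)$. Showing that a definite fraction of $S$ nevertheless lands in components of macroscopic size is precisely the competition in which the threshold $c=\tfrac13$ enters ($9(1-2c)>3\iff c<\tfrac13$), and it requires the component bookkeeping the paper carries out (splitting components into ``long'' and ``short'' relative to a good edge, proving $M_i\le C\epsilon^{-1}3^i$ and $m_i\le 2m_{i-1}+2M_{i-1}$, and only then invoking the abstract Iterative Lemma with $a<E_-^{m_0}(c)$). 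Your claim that the $u$-extent of some component of $T^k(U)$ grows by the factor $|3(1-2c)|$ at every step is false for general components: a component can be cut through its middle by $x_1=\tfrac13$, and both pieces then have strictly smaller $u$-extent. Without the counting estimate the covering lemma, and hence everything downstream (uniqueness, Birkhoff, intermittency), is unsupported. A secondary gap: the synchronization claim for \emph{every} initial point is not established by your sketch either --- a point arbitrarily close to the diagonal but straddling $x=\tfrac13$ is kicked out to $|u|\approx|1-2c|$, so ``a short analysis of how such straddles can recur'' is doing real work that you do not supply.
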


\begin{figure}
	\centering
	\includegraphics[width=0.5\textwidth]{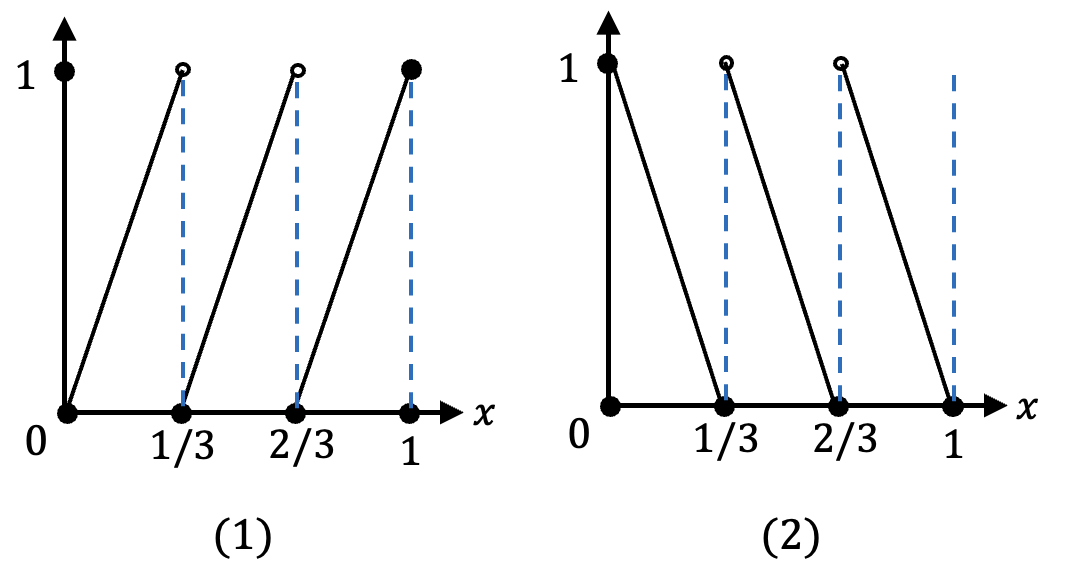}
	\caption{The figures show the map $f(x)=\pm 3x ~\text{mod}~ 1$ respectively. (1) corresponds to $f(x)= 3x \mod 1$,  while (2) corresponds to $f(x)= -3x ~\text{mod}~ 1$. }
	\label{3xmod1}
\end{figure}
We can also obtain the Lyapunov exponents of the diagonal invariant set $S_{inv}$:
\[
\lambda_{\parallel} = \ln 3, \quad \lambda_{\perp} = \ln |3(1-2c) |.
\]
When \( c \in (0, \frac{1}{3}) \),  \( \lambda_{\perp} = \ln |3(1-2c) | > 0 \), perturbations in the transverse direction grow exponentially, indicating that even if the initial point is arbitrarily close to the synchronized state, small random perturbations will cause it to rapidly diverge from the synchronization manifold.
When $c\in (\frac{1}{3},\frac{2}{3})$,  \( \lambda_{\perp} = \ln |3(1-2c) | <0 \), any  point through iterations will gradually get infinitely closer to $S_{inv}$ and never move away from it, indicating that synchronization occurs for $c\in (\frac{1}{3},\frac{2}{3})$.
Therefore,	we can conclude that $c = \frac{1}{3}$ ($c = \frac{2}{3}$ symmetrically) is the transition point between synchronization and intermittent-synchronization.
\vskip 0.2cm
\noindent {\bf Remark.}\ {\it The uniqueness of ACIM together with
Tsujii's result \cite{tsujii2001absolutely} (see the end of Section \ref{sec:4}) shows that the trajectory of almost each point in the sense of Lebesgue measure will visit almost everywhere of a region with a positive Lebesgue measure for infinite times.}

\section{\label{sec:2}Basic lemma}

In this section, we give  a fundamental iterative lemma of coupled map lattices.
%We use it as a basic premise to consider intermittent-synchronization of CML by geometric-combinatorics method in Sec. \ref{sec:3} and Sec. \ref{sec:4}.

We begin with some basic properties and definitions.
Roughly speaking, for any set $S$ with a small measure in some sense, due to the local expansion property of the map $T$,  we hope that the measure of $T^j(S)$ will become large enough for some sufficiently large $j$ such that $T^j \cap S_{inv} \neq \emptyset$.
If $T^i(S)$ is a segment or convex region for $i = 0, 1, \dots, j$, then we can show that for any neighborhood of the diagonal $S_{inv}$, there exists a constant $c_0 > 0$ such that there exists a subset $S_0$ of $S$ satisfying the following conditions:
\begin{itemize}
	\item For each point $\bm{q} \in S_0$, $T^j(\bm{q})$ lies in the neighborhood of $S_{inv}$;
	\item $M(S_0) \geq c_0 M(S)$, where $M(\cdot)$ denotes the Lebesgue measure or the length of a simple curve.
\end{itemize}
These two conditions imply that equations (\ref{equation:2}) and (\ref{equation:4}) hold for a set of full measure.

In reality, the situation is significantly more complex than the one outlined earlier, primarily because $T$ is not a one-to-one map and is not globally expanding. Specifically, since $T$ is not injective, it is generally the case that not all $T^i(S)$ for $i = 0, 1, \dots, j$ are segments or convex regions. Without this property, it becomes impossible to satisfy the second condition stated earlier.

To proceed, we decompose $[0, 1]^2$ as the union of small squares $Q_J$, where the squares $Q_J$ are defined by the partitions of $[0, 1]^2$ using the lines $x_1 = \frac{1}{2}$ and $x_2 = \frac{1}{2}$ when $f(x)= 2x ~\text{mod}~1$ as shown in Thm. \ref{Th1}, and using the lines $x_1 = \frac{1}{3}$, $x_1 = \frac{2}{3}$, $x_2 = \frac{1}{3}$ and $x_2 = \frac{2}{3}$ when $f(x)= \pm 3x ~\text{mod}~1$ as shown in Thm. \ref{Th2}.

For these piecewise linear expanding map $f$, we can observe that the map $T: Q_J \to [0, 1]^2$ is injective on each square $Q_J$.
Furthermore, $T(Q_J \cap S)$ is convex if $Q_J \cap S$ is.
%the image $T^{i}(Q_J \cap S)$ retains the property that all of $T^{i}(Q_J \cap S)$ are either segments or convex regions for $i = 0, 1, \dots, j$.

Therefore, we divide the set $S$ into subsets of the form $S \cap Q_J$ and consider the iterations of $T$ on each subset individually.

\begin{definition}
	We define that a set $S$ has $i_0$ components if it can be decomposed into $i_0$ non-empty subsets of $S \cap Q_J$. Let $D \subset [0, 1]^2$ be a given set.
	For a set $S$ consisting of components $S_{0,1}, \dots, S_{0,i_0}$, we say that $S$ has $\hat{i}_0$ components within $D$ if there exist exactly $\hat{i}_0$ subsets, denoted by $S_{0,j}$, with $j = 1, \dots, \hat{i}_0 \leq i_0$, such that $S_{0,j} \cap D \neq \emptyset$ for each $1 \leq j \leq \hat{i}_0$.\label{key}
\end{definition}

\begin{definition}
	Given the set $S$ as described, suppose for each $1 \leq j \leq i_0$, the image of $S_{0,j}$ under $T$, denoted $T(S_{0,j})$, consists of $k_0(j)$ components. In this case, we say that the set $T(S)$ has a total of $\sum_{j=1}^{i_0} k_0(j)$ components.
	Similarly, suppose for each $1 \leq j \leq i_0$, $T(S_{1,j})$ has $\hat{k}_0(j)$ components in $D$. We then say that the set $T(S)$ contains $\sum_{j=1}^{\hat{i}_0} \hat{k}_0(j)$ components within $D$.
\end{definition}	

For Lorenz map $f(x)= 2x ~\text{mod}~1$ as an example, we consider two sets $S$ and $D$, as shown in Fig. \ref{Fig.diedai1}, $S$ has $2$ components as it has two non-empty subsets of $S\cap Q_{J}$.
We denote these subsets as $S_{0}$ and $S_{1}$.
$S$ has one component  in the set $D$ as $S_{0}\cap D \neq \emptyset $ and $S_{1}\cap D = \emptyset $.
In the same way, after one iteration, 	$T(S_{0})$ has $3$ components and one of them  intersects with  $D$;
$T(S_{1})$ has $3$ components and two of them  intersect with  $D$.
Therefore, $T(S)$ has $6$ components and three of them  intersect with $D$.
\begin{figure}[h]
	\centering
	\includegraphics[width=0.45\textwidth]{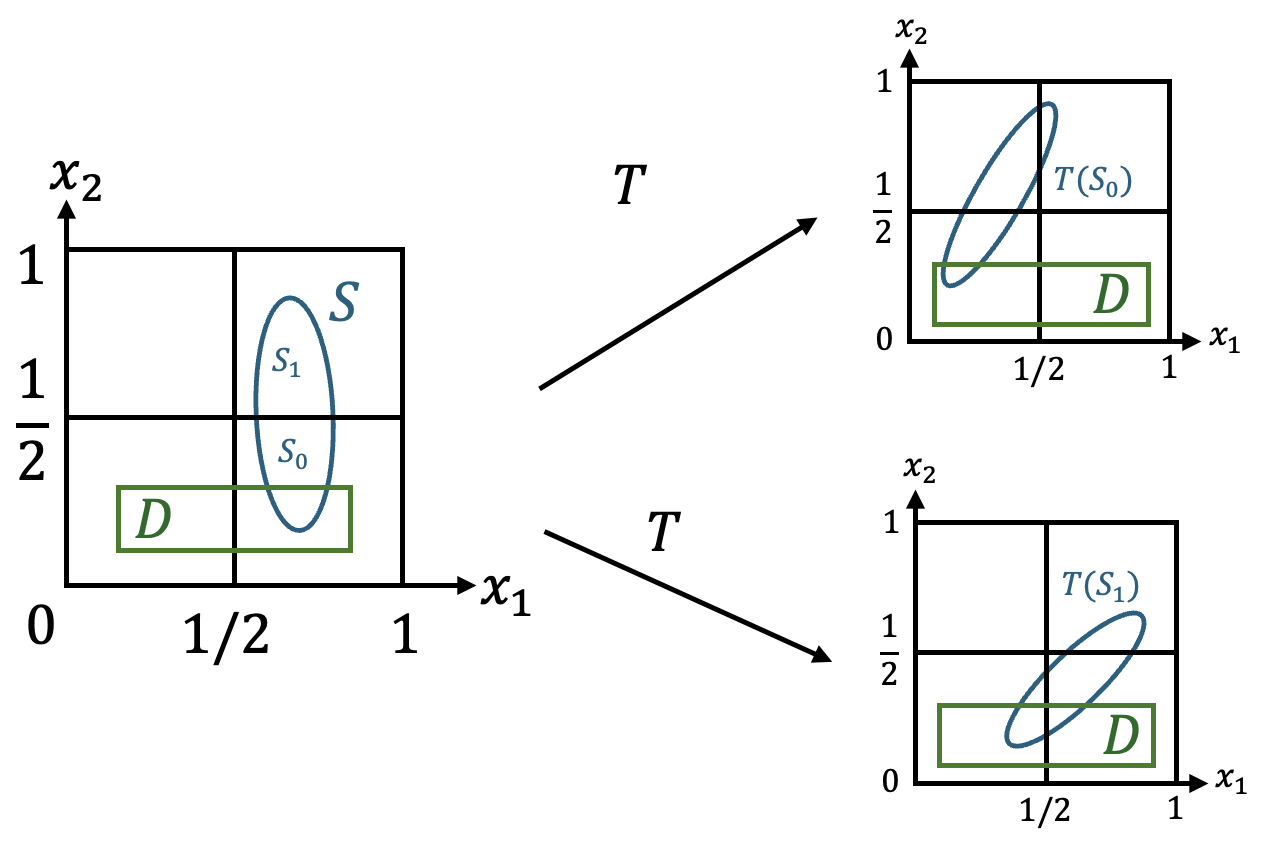}
	\caption{\label{Fig.diedai1} The figure shows an example of the number of components of $S$ and its iteration: $S$ has $2$ conponents that we define as  $S_{0},S_{1}$, one of them  intersects with $D$;
		$T(S_{0})$ has $3$ components and one of them  intersects with  $D$;
		$T(S_{1})$ has $3$ components and two of them  intersect with  $D$.
		Therefore, $T(S)$ has $6$ components and three of them intersect with $D$.}
\end{figure}

	Inductively, suppose $T^l(S)$ is composed of components $S_{l,1}, \dots, S_{l,i_l}$, and also contains components $S_{l,l_j}$, with $j = 1, \dots, \hat{i}_l \leq i_l$, inside $D$. Furthermore, for each $1 \leq j \leq i_l$, $T(S_{l,j})$ consists of $k_l(j)$ components, and for each $1 \leq j \leq \hat{i}_l$, $T(S_{l,l_j})$ contains $\hat{k}_l(j)$ components in $D$. We then say that $T^{l+1}(S)$ consists of $\sum_{j=1}^{i_l} k_l(j)$ components and $\sum_{j=1}^{\hat{i}_l} \hat{k}_l(j)$ components in $D$, respectively.

Moreover, we can apply the same approach to define the components of $T^{-l}(S)$ within $D$.
It is important to observe that for each component $\Omega$ of $T^l(S)$, there exists a subset $\Omega_0 \subset S$ such that $T^l: \Omega_0 \rightarrow \Omega$ is a homeomorphism.

Clearly, the measure of each component of $S$ is typically smaller than the measure of $S$ itself.
Additionally, the image of each component may be subdivided into multiple components. Therefore, it is necessary to establish that, on average, the local expansion of the map will eventually outweigh the division caused by the lines $x_1 = \frac{1}{2}$ and $x_2 = \frac{1}{2}$  for $f(x)= 2x ~\text{mod}~1$ and by the lines $x_1 = \frac{1}{3}$, $x_{1}=\frac{2}{3}$, $x_2 = \frac{1}{3}$ and $x_{2}=\frac{2}{3}$ for $f(x)= \pm 3x ~\text{mod}~1$ .
This ensures that the measures of the components continue to grow as the iteration progresses, but only if the intersection between the relevant set and $S_{inv}$ is empty.

%The key iteration lemma and its corollary are provided below in more detail.

Consider the coupled map lattice~$T$~in (\ref{model}),
let~$S \subset [0,1]^2$~be a simple curve or a  measurable set.
We define
\begin{eqnarray*}
	E_{+}(c) = \left\{
	\begin{array}{ll}
	\left| \det(I + cA) \right| k_+^2,  &\text{w.r.t. a measurable set } S, \\
		\underset{\bm{a} \in [0,1]^2}{\sup} \left\| JT(\bm{a}) \right\|,  &\text{w.r.t. a simple curve } S.
	\end{array}
	\right.
\end{eqnarray*}
%\begin{eqnarray*}
	%	E_{+}(c) = \left\{
%		\begin{array}{ll}
	%		\underset{\bm{a} \in [0,1]^2}{\sup} \left| \det(JT(\bm{a}, c)) \right| = \left| \det(I + cA) \right| k_+^2, &\\ \text{w.r.t. a measurable set } S, \\
%			\underset{\bm{a} \in [0,1]^2}{\sup} \left\| JT(\bm{a}, c) \right\|, &\\ \text{w.r.t. a simple curve } S
%		\end{array}
%		\right.
%\end{eqnarray*}
where~$JT(\bm{a})$~is the Jacobian matrix of~$T$ at point~$\bm{a}$ and~$k_+ =\sup  \left| f'(x) \right|$.

Similarly, we define
\begin{eqnarray*}
	E_{-}(c) = \left\{
	\begin{array}{ll}
	\left| \det(I + cA) \right| k_-^2, &\text{w.r.t. a measurable set } S, \\
		\underset{\bm{a} \in [0,1]^2}{\inf} \left\| JT(\bm{a}) \right\|, & \text{w.r.t. a simple curve } S.
	\end{array}
	\right.
\end{eqnarray*}
%\begin{eqnarray*}
%		E_{-}(c) = \left\{
	%	\begin{array}{ll}
%			\underset{\bm{a} \in [0,1]^2}{\inf} \left| \det(JT(\bm{a}, c)) \right| = \left| \det(I + cA) \right| k_-^2, &\\ \text{w.r.t. a measurable set } S, \\
	%		\underset{\bm{a} \in [0,1]^2}{\inf} \left\| JT(\bm{a}, c) \right\|, & \\
	%		\text{w.r.t. a simple curve } S
%		\end{array}
	%	\right.
%\end{eqnarray*}
where~$k_- = \inf  \left| f'(x) \right|$.

Obviously, for any simple curve or measurable set~$S$~in a small square~$Q_J$~of the phase space, we have
$$
E_{-}(c) M(S) \leq M(T(S)) \leq E_{+}(c) M(S).
$$
When $f(x)= 2x ~\text{mod}~1$ as shown in Thm. \ref{Th1}, we have~$E_{\pm}(c)=4(1-2c)$~for a measurable set and~$E_{+}(c) = 2$,~$E_{-}(c) = 2(1-2c)$~for a simple curve;
when $f(x)= \pm 3x ~\text{mod}~1$ as shown in Thm. \ref{Th2}, we have~$E_{\pm}(c)=9(1-2c)$~for a measurable set and~$E_{+}(c) = 3$,~$E_{-}(c) = 3(1-2c)$~for a simple curve.

	\begin{figure}[h]
	\centering
	\includegraphics[width=0.45\textwidth]{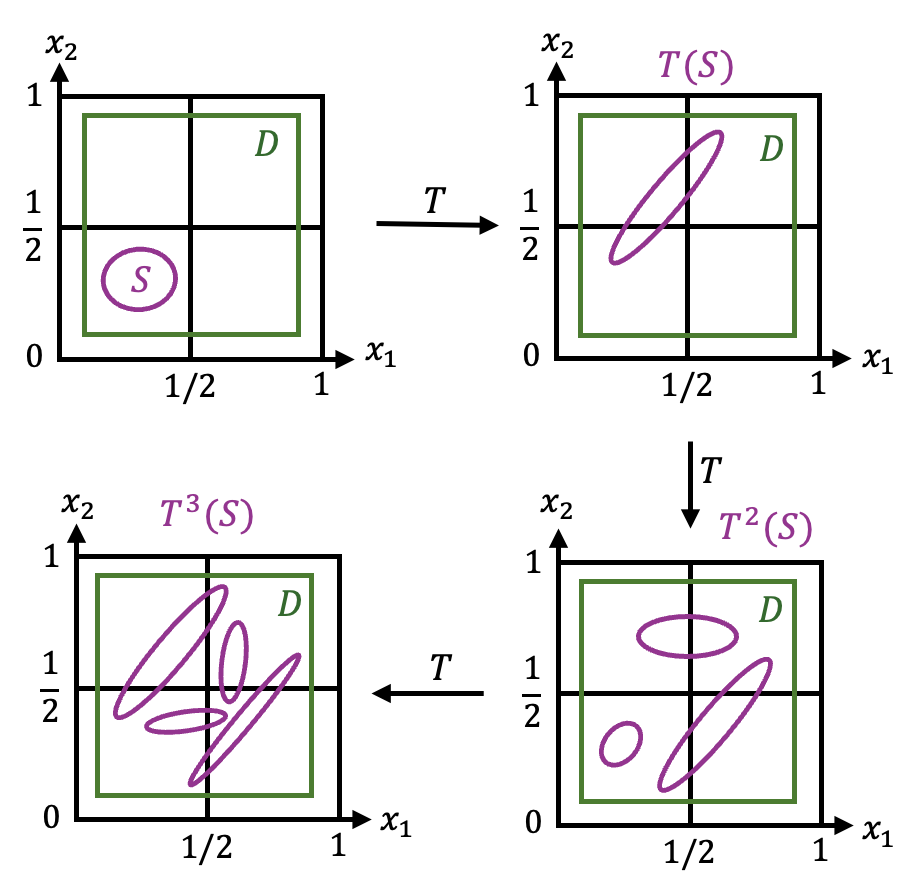}
	\caption{\label{Fig.diedai2} The figure shows an example of a measurable set $S$ in $D$ is $(\frac{1}{4},3,10)-$good as  $M(S)\leq \frac{1}{4} M(D)$, $T(S)$ has 3 components, $T^{2}(S)$ has 6 components and $T^{3}(S)$ has 10 components.}
\end{figure}

\begin{definition}
	Let~$\delta \in (0,1)$,~$m_0, a \in \mathbb{N}$,~and~$D$~be a domain in the phase space~$[0,1]^2$.
	We say that a simple curve or measurable set~$S$~in~$D$~is~$(\delta, m_0, a)$-good if~$S$~lies in some small square of~$[0,1]^2$~with~$M(S) \leq \delta M(D)$, and for each~$i \geq 0$~and each component~$S_1$~of~$T^j(S)$~with~$M(S_1) \leq \delta M(D)$, it holds that~$T^{m_0}(S_1)$~has at most~$a$~components.

\end{definition}
Note that for any~$S$~in some square of~$[0,1]^2$, as long as~$T^{m_0}(S)$~has at most~$a$~components, the second property of~$(\delta, m_0, a)$-good typically holds.
	For example, as shown in Fig. \ref{Fig.diedai2}, we can see $M(S)\leq \frac{1}{4} M(D)$ and $T^{3}(S)$ has 10 components, then we can say that the measurable set $S$ in $D$ is $(\frac{1}{4},3,10)-$good.

%%%--------------------------------------------------------------------------------------------------------------------------------------------
%%%--------------------------------------------------------------------------------------------------------------------------------------------
%%%--------------------------------------------------------------------------------------------------------------------------------------------

The following iteration lemma and its corollary follow from a proof mechanism similar to that of  Li et al.\cite{li2021intermittent}.
For the convenience of the readers and to maintain the integrity of the article, we present the complete proof in the Appendix.

\begin{lemma}\label{Lem:diedai}
	(Iterative Lemma)
	Assume~$E_{+}(c)\geq E_{-}(c)>1$.
	Let~$D$~be a domain in the phase space~$[0,1]^{2}$.
	Suppose~$0<\delta_{1}<1$~and~$a< E^{m_{0}}_{-}(c)$~with~$a,m_{0}\in \mathbb{N}$.
	Let
	\begin{eqnarray*}
	1<&\mu&<(1-\log_{E_{+}(c)}\frac{E_{-}(c)}{a^{1/m_{0}}})^{-1},\\
	d&=&1-(1-\log_{E_{+}(c)}\frac{E_{-}(c)}{a^{1/m_{0}}})\mu >0,\\
	F(c)&=&a\left(\frac{E_{-}(c)}{a^{1/m_{0}}}\right)^{1-\log_{E_{+}(c)}\delta_{1}},
	\end{eqnarray*}
	and define~$N_{0}=\lfloor \log_{\mu}(d^{-1}\log_{2}F(c))\rfloor$.
	
	Suppose~$N > N_{0}$~and for any $(\delta_{1},m_{0},a)$-good curve or measurable set~$\Omega \subset D$~with the range of measure~$M(\Omega)\in [2^{-\mu^{N+1}}M(D), 2^{-\mu^{N}}M(D)]$~and define~$k(N)=\lfloor -\log_{E_{+}(c)}\frac{M(\Omega)}{\delta_{1}M(D)}\rfloor$,
	then there exist some disjoint sub-curves or measurable subsets~$\Omega_{j}\subset \Omega$,~$j=1,\cdots,$~such that
	\begin{enumerate}
		\item for each~$j \geq 1$~it holds that~$T^{k(N)}(\Omega_{j})$~is a component of~$T^{k(N)}(\Omega)$~with~$M(T^{k(N)}(\Omega_{j}))\geq 2^{-\mu^{N}}M(D)$~and
		\item ~$M(\sum_{j\geq1}\Omega_{j})\geq (1-F(c)2^{-d\mu^{N}})M(\Omega)$.
	\end{enumerate}
\end{lemma}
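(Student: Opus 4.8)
The plan is to track, simultaneously, the growth of the \emph{measure} of a component under iteration and the \emph{number} of components that it breaks into, and to show that the former wins on average precisely when $a < E_-^{m_0}(c)$. First I would set up the bookkeeping. Starting from $\Omega$ with $M(\Omega)\in[2^{-\mu^{N+1}}M(D),2^{-\mu^N}M(D)]$, I iterate $T$. As long as a component $S_1$ still satisfies $M(S_1)\le\delta_1 M(D)$, the estimate $M(T(S_1))\ge E_-(c)M(S_1)$ applies, and by the $(\delta_1,m_0,a)$-good hypothesis, after every block of $m_0$ steps the number of components it produces is multiplied by at most $a$. Hence over $k(N)$ steps the measure of a ``surviving chain'' of components grows by at least $E_-(c)^{k(N)}$ while the number of such chains is at most $a^{k(N)/m_0}=\bigl(a^{1/m_0}\bigr)^{k(N)}$; the definition $k(N)=\lfloor-\log_{E_+(c)}\frac{M(\Omega)}{\delta_1 M(D)}\rfloor$ is exactly the number of steps after which the \emph{largest} component (which grows by at most $E_+(c)$ per step) can first reach size $\delta_1 M(D)$, so up to this time the expansion estimates are valid on every component.

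Next I would pass to the ``good'' subcollection. At time $k(N)$, discard every component of $T^{k(N)}(\Omega)$ whose measure is smaller than $2^{-\mu^N}M(D)$; the surviving ones are the images $T^{k(N)}(\Omega_j)$ claimed in conclusion~(1), and by construction each has $M(T^{k(N)}(\Omega_j))\ge 2^{-\mu^N}M(D)$. The content of conclusion~(2) is a measure-deficit bound: I must show that the total measure of the discarded pieces is at most $F(c)2^{-d\mu^N}M(\Omega)$. The number of components of $T^{k(N)}(\Omega)$ is at most $a\cdot\bigl(a^{1/m_0}\bigr)^{k(N)-1}$ (the leading factor $a$ absorbing the first incomplete block and boundary effects), and each discarded one has measure $< 2^{-\mu^N}M(D)$. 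Therefore the discarded measure is at most
\begin{equation*}
a\,\bigl(a^{1/m_0}\bigr)^{k(N)}\,2^{-\mu^N}M(D).
\end{equation*}
Now substitute $k(N)\le -\log_{E_+(c)}\frac{M(\Omega)}{\delta_1 M(D)}$, i.e.\ $E_+(c)^{-k(N)}\ge \frac{M(\Omega)}{\delta_1 M(D)}$, and rewrite $\bigl(a^{1/m_0}\bigr)^{k(N)}=E_+(c)^{k(N)\log_{E_+(c)}a^{1/m_0}}$. Using $M(\Omega)\le 2^{-\mu^N}M(D)$ to trade one power of $2^{-\mu^N}$ against $M(\Omega)/M(D)$ and reorganising the exponents, the exponent of $2$ becomes $-\mu^N\bigl(1-\log_{E_+(c)}\frac{E_-(c)}{a^{1/m_0}}\bigr)$ up to the factor $\mu$ coming from $\mu^{N+1}$ versus $\mu^N$ in the measure range — which is exactly where $d=1-\bigl(1-\log_{E_+(c)}\frac{E_-(c)}{a^{1/m_0}}\bigr)\mu$ enters — and the constant collects into $F(c)=a\bigl(\frac{E_-(c)}{a^{1/m_0}}\bigr)^{1-\log_{E_+(c)}\delta_1}$. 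The hypotheses $a<E_-^{m_0}(c)$ and $\mu<\bigl(1-\log_{E_+(c)}\frac{E_-(c)}{a^{1/m_0}}\bigr)^{-1}$ guarantee $d>0$, so the deficit is genuinely exponentially small in $\mu^N$, and $N>N_0=\lfloor\log_\mu(d^{-1}\log_2 F(c))\rfloor$ makes $F(c)2^{-d\mu^N}<1$, so a positive-measure subcollection survives. Dividing through by $M(\Omega)$ yields conclusion~(2).

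The main obstacle I anticipate is the careful matching of exponents in the deficit estimate: one must track precisely how $k(N)$, the measure range $[2^{-\mu^{N+1}},2^{-\mu^N}]M(D)$, and the component count interact, making sure the crude bound ``number of components $\le a\,(a^{1/m_0})^{k(N)}$'' is legitimate (this uses that after $m_0$ steps a \emph{still-small} component has at most $a$ components, and that once a component exceeds $\delta_1 M(D)$ we simply stop subdividing it — it is already large enough and contributes to the good subcollection, not to the deficit). A secondary subtlety, handled exactly as in Li et al.\ \cite{li2021intermittent}, is the distinction between the measurable-set case and the simple-curve case: in the curve case $E_+(c)=\sup\|JT\|$ and $E_-(c)=\inf\|JT\|$ play the roles of the linear expansion factors, the ``measure'' is arc length, $T$ maps a convex sub-curve in one square to a sub-curve, and the same counting applies verbatim since $T|_{Q_J}$ is injective. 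Because the argument is structurally identical to the one in \cite{li2021intermittent} once these substitutions are made, I would present the full details in the Appendix as indicated and only highlight here the points where the discontinuity of $f$ forces the partition into the squares $Q_J$ rather than the half-intervals used for the tent map.
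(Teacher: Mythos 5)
Your proposal follows essentially the same route as the paper's Appendix proof: bound the number of components of $T^{k(N)}(\Omega)$ by $a^{\lfloor k(N)/m_0\rfloor+1}$, pull the discarded (too-small) components back to $\Omega$ via the contraction factor $E_-(c)^{-k(N)}$, then substitute the definition of $k(N)$ and the measure window $[2^{-\mu^{N+1}}M(D),\,2^{-\mu^{N}}M(D)]$ to collect the exponents into $F(c)2^{-d\mu^{N}}$. The only cosmetic slip is that your displayed deficit bound omits the explicit factor $E_-(c)^{-k(N)}$ needed to convert the total measure of the discarded images at time $k(N)$ into the measure of their preimages inside $\Omega$, but your surrounding discussion (where $E_-(c)/a^{1/m_0}$ enters the exponent) makes clear you intend to include it, so the argument matches the paper's.
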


%%%--------------------------------------------------------------------------------------------------------------------------------------------
%%%--------------------------------------------------------------------------------------------------------------------------------------------
%%%--------------------------------------------------------------------------------------------------------------------------------------------

\begin{corollary}\label{Coro:diedai}
	Let the domain~$D$,~$m_{0}$,~$a$,~$\mu$,~$N_{0}$~and~$d$~be as in  Lemma \ref{Lem:diedai}.
	Then there exists a constant~$c_{1}>0$~such that for any $(\delta_{1},m_{0},a)$-good curve or measurable set~$\Omega \subset D$~with~$M(\Omega)\leq \delta_{1}M(D)$, there exist some disjoint sub-curves or measurable subsets~$\Omega_{j}\subset \Omega$,~$j=1,\cdots,$~such that
	\begin{enumerate}
		\item for any~$i$, there exists~$k(i)$~such that~$T^{k(i)}(\Omega_{i})$~is a component of~$T^{k(i)}(\Omega)$~with~$M(T^{k(i)}(\Omega_{i}))\geq 2^{-\mu^{N_{0}}}M(D)$~and
		\item ~$M\left(\sum_{i}\Omega_{i}\right)\geq c_{1}M(\Omega)$.
	\end{enumerate}
\end{corollary}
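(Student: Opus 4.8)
The corollary is the iterated form of Lemma \ref{Lem:diedai}: a single application of the lemma lifts a $(\delta_{1},m_{0},a)$-good set whose measure lies in a band $[2^{-\mu^{N+1}}M(D),2^{-\mu^{N}}M(D)]$ up to components of measure $\ge 2^{-\mu^{N}}M(D)$, losing only a fraction $F(c)2^{-d\mu^{N}}$ of the measure. The plan is to chain such applications along the $\mu$-geometric ladder of scales $2^{-\mu^{N}}M(D)$, $N=N_{0}+1,N_{0}+2,\dots$, building a finite tree of components and stopping a branch once its current component has reached the target scale $2^{-\mu^{N_{0}}}M(D)$; the per-scale losses then multiply into a convergent infinite product, whose value is the constant $c_{1}$.

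Concretely: if $M(\Omega)\ge 2^{-\mu^{N_{0}}}M(D)$, take $\Omega_{1}=\Omega$ and $k(1)=0$. Otherwise $M(\Omega)$ sits in a band of index $N\ge N_{0}+1$, and I would run the following recursion. To a current component $\Theta$ — carried along with an exponent $k$ and its homeomorphic preimage $\Theta'\subset\Omega$, $T^{k}\colon\Theta'\to\Theta$ — whose measure lies in band $N\ge N_{0}+1$, apply Lemma \ref{Lem:diedai}; this is legitimate because any component of a forward iterate of the good set $\Omega$ is again $(\delta_{1},m_{0},a)$-good, since it lies in a single square $Q_{J}$ and any further component of its forward orbit is already a component of a forward iterate of $\Omega$. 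The lemma produces disjoint $\Theta_{j}\subset\Theta$ with $T^{k(N)}(\Theta_{j})$ a component of $T^{k(N)}(\Theta)$ of measure $\ge 2^{-\mu^{N}}M(D)$ and $M(\bigcup_{j}\Theta_{j})\ge(1-F(c)2^{-d\mu^{N}})M(\Theta)$. Since $T$ is piecewise affine, $T^{k}$ is affine on $\Theta'$, so pulling the $\Theta_{j}$ back through $T^{k}\colon\Theta'\to\Theta$ multiplies all measures by a common constant; hence the preimages $\Theta_{j}'\subset\Theta'\subset\Omega$ obey $\sum_{j}M(\Theta_{j}')\ge(1-F(c)2^{-d\mu^{N}})M(\Theta')$, and $T^{k+k(N)}$ maps $\Theta_{j}'$ homeomorphically onto the component $T^{k(N)}(\Theta_{j})$ of $T^{k+k(N)}(\Omega)$. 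Call a component \emph{final} once its measure is $\ge 2^{-\mu^{N_{0}}}M(D)$; otherwise feed it back into the recursion. The band index strictly drops from a node to each child, and the branching is finite (a fixed power of the piecewise-affine $T$ has finitely many pieces), so the tree is finite; its leaves, pulled all the way back to $\Omega$, are the $\Omega_{i}$, with composed exponents $k(i)$, and $T^{k(i)}(\Omega_{i})$ is a component of $T^{k(i)}(\Omega)$ of measure $\ge 2^{-\mu^{N_{0}}}M(D)$.

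For the measure bound I would prove by induction up the tree that from any node with set $\Theta$ of band index $N_{\Theta}$ the final descendants, pulled back to $\Theta$, cover at least $\bigl[\prod_{N=N_{0}+1}^{N_{\Theta}}(1-F(c)2^{-d\mu^{N}})\bigr]M(\Theta)$ — the inductive step being the no-distortion computation above applied to each child, together with $\sum_{j}M(\Theta_{j}')\ge(1-F(c)2^{-d\mu^{N_{\Theta}}})M(\Theta)$ and the fact that each child's band index is at most $N_{\Theta}-1$. At the root this yields
\[
M\Bigl(\bigcup_{i}\Omega_{i}\Bigr)\ \ge\ c_{1}\,M(\Omega),\qquad c_{1}:=\prod_{N>N_{0}}\bigl(1-F(c)2^{-d\mu^{N}}\bigr).
\]
Since $\mu>1$ the exponents $d\mu^{N}$ grow super-exponentially, so the series $\sum_{N>N_{0}}F(c)2^{-d\mu^{N}}$ converges; and $N_{0}=\lfloor\log_{\mu}(d^{-1}\log_{2}F(c))\rfloor$ is precisely chosen so that $F(c)2^{-d\mu^{N}}<1$ for every $N>N_{0}$, so each factor is in $(0,1)$ and $c_{1}$ is a positive constant depending only on $D,m_{0},a,\mu,N_{0},d$, not on $\Omega$. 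Disjointness of the $\Omega_{i}$ is inherited from the tree, since distinct components of any $T^{k}(\Omega)$ have disjoint homeomorphic preimages in $\Omega$.

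Two points carry the weight. The first is the uniformity of $c_{1}$: one must check that the standing hypotheses $1<E_{-}(c)$, $a<E_{-}^{m_{0}}(c)$, $1<\mu<(1-\log_{E_{+}(c)}(E_{-}(c)a^{-1/m_{0}}))^{-1}$ and $d>0$ really do force both the super-exponential decay of the losses and the positivity of every factor beyond $N_{0}$, so the product is genuinely bounded away from $0$. The second — which I expect to be the main obstacle — is the boundary scale: a branch may get stuck at a component whose band index is exactly $N_{0}$ (measure in $[2^{-\mu^{N_{0}+1}}M(D),2^{-\mu^{N_{0}}}M(D))$), where Lemma \ref{Lem:diedai} no longer applies; such a component must be pushed up to the target scale by a bounded number of extra iterations of $T$, the bound depending only on $\mu^{N_{0}}$ and $E_{-}(c)>1$, which costs only one further fixed positive factor that can be absorbed into $c_{1}$. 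One also needs $2^{-\mu^{N_{0}}}\lesssim\delta_{1}$, which follows from the definitions of $N_{0}$ and $F(c)$, to ensure every non-final component is small enough for the good property — hence the lemma — to be invoked at all.
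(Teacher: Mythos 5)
Your proposal is correct and follows essentially the same route as the paper: repeatedly apply Lemma \ref{Lem:diedai} along the ladder of measure bands $[2^{-\mu^{N+1}}M(D),2^{-\mu^{N}}M(D)]$, retire each component once it reaches the scale $2^{-\mu^{N_{0}}}M(D)$, and bound the retained measure by the convergent infinite product $c_{1}=\prod_{j}(1-F(c)2^{-d\mu^{j}})>0$, whose positivity follows from summability of the logarithms. Your extra attention to the pullback/disjointness bookkeeping and to the boundary band at index $N_{0}$ only makes explicit details the paper's induction leaves implicit; it does not change the argument.
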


%%%--------------------------------------------------------------------------------------------------------------------------------------------
%%%--------------------------------------------------------------------------------------------------------------------------------------------
%%%--------------------------------------------------------------------------------------------------------------------------------------------
Note that Corollary \ref{Coro:diedai} holds true for any  $1<\mu<(1-\log_{E_{+}(c)}\frac{E_{-}(c)}{a^{1/m_{0}}})^{-1}$.
	As~$\mu \rightarrow 1$, we obtain the upper bound~$\log_{E_{+}(c)}(E_{-}(c)a^{-1/m_{0}})F(c)^{-1}$~for~$2^{-\mu^{N_{0}}}$.

Note that  the Iterative Lemma \ref{Lem:diedai} is not limited to two-node CMLs: as long as the fundamental condition \( a < c<E^{m_{0}}_{-}(c)\) is satisfied, we can apply it to investigate intermittent-synchronization in multi-node CMLs.

\section{\label{sec:3}Intermittent-synchronization and unique ACIM of CML with the map $f(x)=2x \mod 1$}
In this section,  we consider two-node coupled standard Lorenz map lattice and obtain a complete range of coupling strength for intermittent-synchronization and the uniqueness of ACIM, i.e., Theorem \ref{Th1}.
We use a new geometric-combinatorics method to prove it.

\subsection{Proof of the intermittent-synchronization when  \( c \in [0, \frac{1}{4}) \).}

	 The proof of the intermittent-synchronization for Thm. \ref{Th1} can be divided into two parts: the ordered part (\ref{equation:4}) and the disordered part (\ref{equation:5});
	 while the process diagram for the proof of intermittent-synchronization is shown in Fig.  \ref{Fig.process}.
	\begin{figure}[h]
		\centering
		\includegraphics[width=0.45\textwidth]{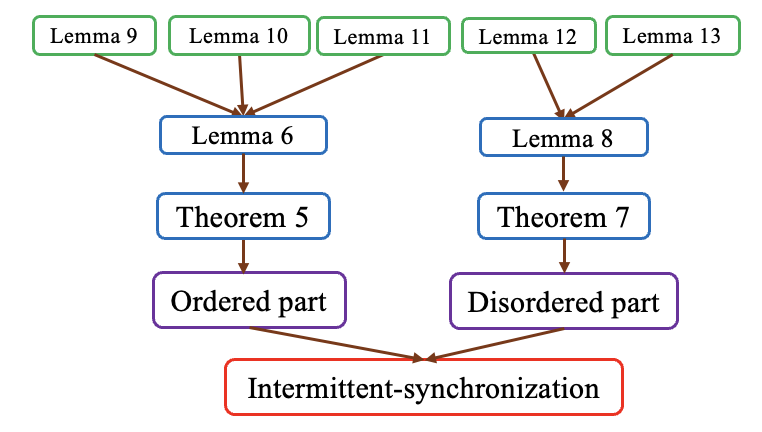}
		\caption{\label{Fig.process} The figure shows the process diagiam for the proof of intermittent-synchronization in Theorem \ref{Th1}, in which
		Lemma \ref{lemma6} is a corollary of Lemma 9, 10 and 11,
		Lemma \ref{lemma8} is a corollary of Lemma 12 and 13;
		Lemma 6 and 8 can derive Theorem \ref{thm5} and Theorem \ref{thm7} respectively;
		Theorem \ref{thm5} can derive the ordered part of intermittent-synchronization, and Theorem \ref{thm7} can derive the disordered part of intermittent-synchronization.}
	\end{figure}
	
	We define a neighborhood of the invariant set $S_{inv}$ as
	 $$
	 O_{\epsilon} = \{\bm{a} = (x_1, x_2) \in [0, 1]^2 \mid \text{dist}(\bm{a}, S_{inv}) \leq \epsilon \}
	 $$
	 and its complement as
	 $$
	 D_{\epsilon} = [0, 1]^2 \setminus O_{\epsilon},
	 $$
	 where~$S_{inv}$~is the invariant set and we give a position division of them in the phase space $[0,1]^{2}$,  as shown in Fig. \ref{Fig.neighborhood}.
	
	 \begin{figure}[h]
	 	\centering
	 	\includegraphics[width=0.48\textwidth]{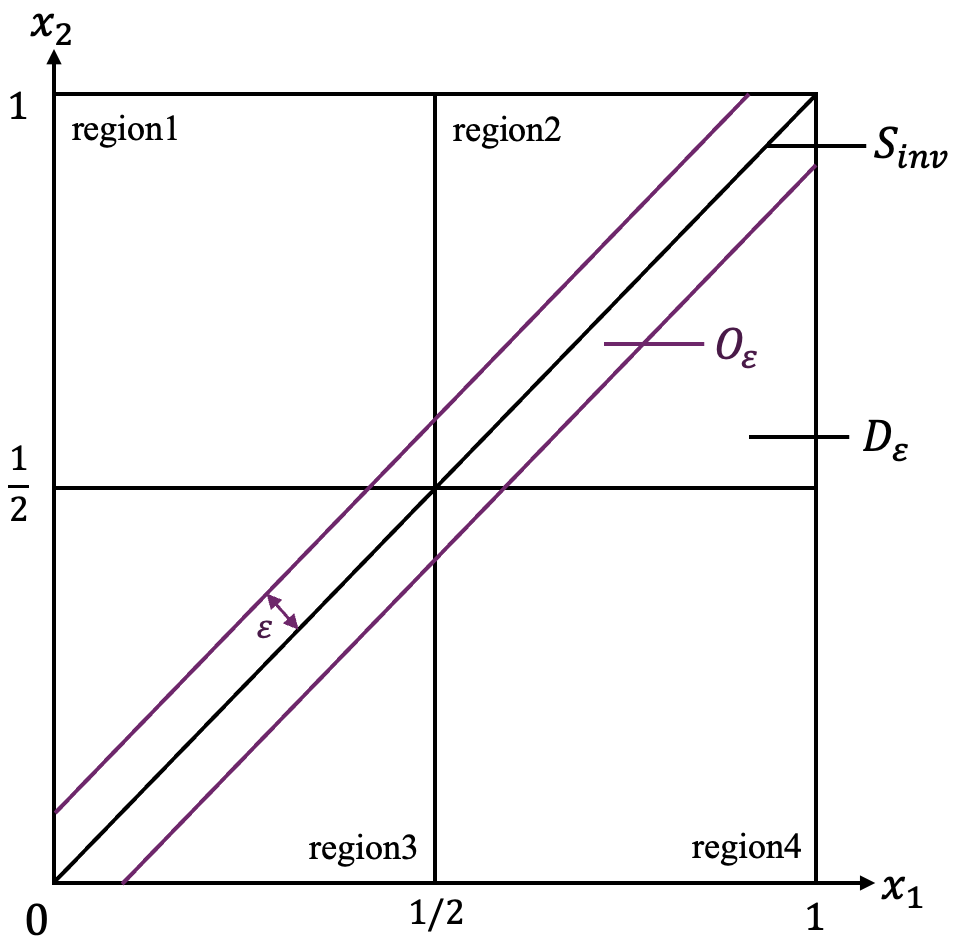}
	 	\caption{\label{Fig.neighborhood}The figure shows the set~$O_{\epsilon}$, its complement~$D_{\epsilon}$, and the invariant set~$S_{inv}$ in the phase space $[0,1]^{2}$.
	 	The four regions of phase space  $[0,1]^{2}$ cut by $x_{1}=\frac{1}{2}$ and $x_{2}=\frac{1}{2}$ are region 1, region 2, region 3 and region 4 respectively.}
	 \end{figure}
	
	 We first prove the ordered part of  Thm. \ref{Th1}, i.e., we will prove that equation (\ref{equation:4}) holds true for almost every initial point.
	 For this purpose, it is sufficient to prove the following theorem.
	
	 %%%--------------------------------------------------------------------------------------------------------------------------------------------
	 %%%--------------------------------------------------------------------------------------------------------------------------------------------
	 %%%--------------------------------------------------------------------------------------------------------------------------------------------
	 \begin{theorem}\label{thm5}
	 	Given any~$\epsilon > 0$, for almost every initial point~$(x_1(0), x_2(0))$~in~$[0, 1]^2$, there exists an~$n \in \mathbb{N}$~such that
	 	$$
	 	\text{dist}((x_1(n), x_2(n)), S_{\text{inv}}) < \epsilon.
	 	$$
	 \end{theorem}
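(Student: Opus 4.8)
The plan is to show that the ``bad set'' of initial points whose entire forward orbit avoids $O_\epsilon$ has Lebesgue measure zero, and in fact that from any small set one reaches $O_\epsilon$ with a definite fraction of the measure. Fix $\epsilon>0$ and put $D=D_\epsilon$. The key is to run the iterative machinery of Corollary \ref{Coro:diedai} on the domain $D$: with $f(x)=2x \bmod 1$ and $c\in[0,\frac14)$ one has $E_\pm(c)=4(1-2c)>2>1$ for measurable sets, so the hypothesis $E_+(c)\ge E_-(c)>1$ holds, and for a suitable fixed choice of $m_0$ and $a$ (controlling how fast the number of convex components can grow, using that $T$ is injective on each square $Q_J$ and maps convex pieces to convex pieces) one checks $a<E_-^{m_0}(c)$. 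This is exactly the regime in which the Iterative Lemma applies.

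First I would reduce to a statement about $(\delta_1,m_0,a)$-good sets: any measurable set contained in a single square $Q_J$ with small enough measure is automatically $(\delta_1,m_0,a)$-good once the component-growth bound is verified, so it suffices to treat such sets. Then I would argue by contradiction: suppose the bad set $B=\{\bm{a}\in[0,1]^2 : T^n(\bm{a})\notin O_\epsilon \text{ for all } n\ge 0\}$ has positive measure. By the Lebesgue density theorem, $B$ has a density point, hence there is an arbitrarily small square-localized subset $\Omega\subset B\cap D$ with $M(\Omega)$ as small as we like and with $M(\Omega\cap B)\ge(1-\eta)M(\Omega)$ for $\eta$ tiny. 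Apply Corollary \ref{Coro:diedai} to $\Omega$: we obtain disjoint subsets $\Omega_i\subset\Omega$ with $M(\bigcup_i\Omega_i)\ge c_1 M(\Omega)$ such that each $T^{k(i)}(\Omega_i)$ is a component of $T^{k(i)}(\Omega)$ with $M(T^{k(i)}(\Omega_i))\ge 2^{-\mu^{N_0}}M(D)$, a fixed positive lower bound independent of $\Omega$.

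The next step is the geometric input: a convex subset of $D=D_\epsilon$ whose measure exceeds the fixed threshold $2^{-\mu^{N_0}}M(D)$ cannot remain confined to $D_\epsilon$ after one more application of $T$ — more precisely, one shows that there is a further iterate $T^{k(i)+\ell}(\Omega_i)$ that intersects $O_\epsilon$ (indeed, once a convex set has measure bounded below, its image under one more step of the expanding map $T$ must straddle one of the partition lines or reach across the strip near the diagonal; a short separate lemma, of the type the authors call Lemma 6/Theorem 5, makes this quantitative). Hence a definite fraction $c_1$ of $M(\Omega)$ consists of points that eventually enter $O_\epsilon$, contradicting $\Omega\subset B$ up to the $\eta$-error once $\eta<c_1$. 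Therefore $M(B)=0$, which is precisely the assertion of Theorem \ref{thm5}.

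The main obstacle I expect is the component-growth bound needed to verify $a<E_-^{m_0}(c)$ uniformly on $D_\epsilon$, i.e. showing that the number of convex components of $T^{m_0}(S_1)$ stays below a constant $a$ smaller than the expansion factor $E_-^{m_0}(c)=(4(1-2c))^{m_0}$. Because $f(x)=2x\bmod 1$ has a discontinuity, a convex piece can be cut by the lines $x_1=\tfrac12$, $x_2=\tfrac12$ and split into up to four pieces per step, so a priori the count grows like $4^{m_0}$, which is \emph{not} dominated by $(4(1-2c))^{m_0}$ when $c$ is close to $\frac14$. Overcoming this requires exploiting the geometry of how $(I+cA)$ acts — in particular that after applying the coupling the images of the four subsquares are sheared parallelograms so that a generic convex set is cut by far fewer lines than the worst case — and using an invariant-direction/quasi-invariant argument (the authors' remark about invariant hyperplanes reducing the problem to one dimension) to get an effective growth rate below $E_-(c)$. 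Pinning down the right $m_0$, $a$, and the companion quantitative ``escape from $D_\epsilon$'' lemma is where the real work lies; the rest is the bookkeeping packaged in the Iterative Lemma together with a density-point argument.
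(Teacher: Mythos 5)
Your overall architecture matches the paper's: reduce to a ``definite fraction of any small convex set eventually enters $O_\epsilon$'' statement (the paper's Lemma \ref{lemma6}), obtain it from the Iterative Lemma \ref{Lem:diedai}/Corollary \ref{Coro:diedai} plus a geometric escape lemma, and then upgrade to full measure. Your density-point-plus-contradiction upgrade is a legitimate alternative to the paper's exhaustion argument (which repeatedly applies Lemma \ref{lemma6} to the convex pieces of the remainder and lets the leftover measure decay like $(1-c_0)^n$). However, there are two genuine gaps. First, the component-growth bound $a<E_-^{m_0}(c)$, which you correctly flag as the crux, is left unresolved, and the fix you gesture at (invariant hyperplanes / shearing of the coupled squares) is not how the paper closes it. The paper's Lemma \ref{lemma10} uses a dichotomy on the enclosing rectangle $S_{\Omega_0}$: if both side lengths are below $\epsilon$, then being cut into three or more pieces would force the image to meet $O_\epsilon$ (so one may assume at most $2$ components per step, and $4(1-2c)>2$); if the set is long and thin ($d_l>\epsilon$, $d_w<10^{-4}\epsilon$), one counts components along the ``good edges'' of the image rectangles and shows the total count is at most $\frac{2d_l}{\epsilon}(i+1)2^i$, i.e.\ essentially $2^i$ up to a polynomial factor, again dominated by $(4(1-2c))^i$. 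Without some version of this dichotomy your verification of the hypotheses of Lemma \ref{Lem:diedai} does not go through, since the naive bound is indeed $4^{m_0}$.

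Second, your escape step is both too weak and too fast. You claim that a convex component of measure at least $2^{-\mu^{N_0}}M(D)$ ``cannot remain confined to $D_\epsilon$ after one more application of $T$''; that is false (a definite-measure convex set sitting near a corner of $[0,1]^2$ can stay far from the diagonal for one step), and mere nonempty intersection with $O_\epsilon$ would not suffice anyway, because to contradict the density estimate you need a \emph{positive proportion} of the component's measure to land inside $O_\epsilon$ so that it pulls back to a positive proportion of $\Omega_i$. The paper supplies exactly this in Lemmas \ref{lemma9} and \ref{lemma11}: one iterates a definite-measure convex set for up to $t(c)$ steps until either its measure exceeds $1$ (forcing it to meet all four regions, hence to contain $(\frac12,\frac12)$ by convexity, with $M(T(\Omega)\cap O_\epsilon)/M(T(\Omega))\ge\frac{\epsilon^2}{2}$), or — when the set keeps getting cut into three pieces — until its intersection with the invariant anti-diagonal $x_1+x_2=1$ grows long enough to reach $(\frac12,\frac12)$. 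Your ``companion quantitative escape lemma'' must therefore be a multi-step, proportion-of-measure statement of this kind; as written, the one-step, intersection-only version leaves the contradiction with $\Omega\subset B$ unestablished.
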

	
Note that this result implies that, for almost every point, its orbit will eventually enter the~$\epsilon$-neighborhood of the invariant set $S_{inv}$.
	 	We will use this fact to show that, for almost every point, its orbit will repeatedly enter (or remain within) the~$\epsilon$-neighborhood of the invariant set $S_{inv}$ infinitely often, which is equivalent to the assertion in equation (\ref{equation:4}).

	 %%%--------------------------------------------------------------------------------------------------------------------------------------------
	 %%%--------------------------------------------------------------------------------------------------------------------------------------------
	 %%%--------------------------------------------------------------------------------------------------------------------------------------------
	 ~\\
	 \noindent \textbf{Proof  (\ref{equation:4}) from Theorem \ref{thm5}:}
	
	 For any positive integer~$i$, define the set
	 \begin{eqnarray*}
	 D_{i} &= &  \{(x_1^{(i)}(0), x_2^{(i)}(0)) \in [0,1]^2 \mid \text{dist}((x_1^{(i)}(0), x_2^{(i)}(0)), S_{inv})\\
	 &&\geq \frac{1}{i} \text{ for all } n \in \mathbb{N}\}~.
	 \end{eqnarray*}
	 To establish equation (\ref{equation:4}), it suffices to demonstrate that $M\left( \cup_{i \geq 1} D_i \right) = 0$.
	 Since $S_{inv}$ is an invariant set, we define $\epsilon = \frac{1}{i}$. This leads to $M(D_i) = 0$, which consequently implies that
	 $M\left( \cup_{i \geq 1} D_i \right) = 0$.
	 \qed

	 %%%--------------------------------------------------------------------------------------------------------------------------------------------
	 	Theorem \ref{thm5} is a corollary of the following lemma.
	
	 \begin{lemma}\label{lemma6}
	 	For any \( 0 \leq c < \frac{1}{4} \) and \( \epsilon > 0 \), there exists \( 0<c_0 \equiv c_0(c, \epsilon) <1 \) with the following property: for any convex set \( S \subset [0, 1]^2 \), one can find a countable collection of disjoint measurable subsets \( \{S_i\}_{i=1}^\infty \) of \( S \) satisfying:
	 	\begin{enumerate}
	 		\item For each \( i \in \mathbb{N} \), there exists an iteration time \( l_i \in \mathbb{N} \) such that \( T^{l_i}(S_i) \subset O_{\epsilon} \);
	 		\item The measure of the union satisfies \( M\left(\bigcup_{i=1}^\infty S_i\right) \geq c_0 M(S) \).
	 	\end{enumerate}
	 \end{lemma}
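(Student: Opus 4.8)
The plan is to reduce the statement to the Iterative Lemma (Lemma \ref{Lem:diedai}) and its Corollary \ref{Coro:diedai} applied with $D=[0,1]^2$ (or with a suitably chosen domain containing $S_{inv}$), together with a geometric argument showing that any component of large measure must intersect the neighborhood $O_\epsilon$. First I would fix $c\in[0,\frac14)$, so that $E_-(c)=E_+(c)=4(1-2c)>1$ for a measurable set and $E_+(c)=2$, $E_-(c)=2(1-2c)$ for a simple curve, as recorded before the statement of Lemma \ref{Lem:diedai}; in particular the hypothesis $E_+(c)\ge E_-(c)>1$ holds. Next I would verify that for the standard Lorenz map the number of components can be controlled: since $T$ restricted to each of the four squares $Q_J$ (cut by $x_1=\tfrac12$, $x_2=\tfrac12$) is injective and affine, the image $T(S_1)$ of a convex piece has at most $4$ components, so any convex $S$ (and inductively all its iterates, broken into convex pieces) is $(\delta_1, m_0, a)$-good for a suitable choice, e.g. $m_0=1$, $a=4$, provided $a<E_-^{m_0}(c)$, i.e. $4<4(1-2c)$ — which fails — so one must instead take $m_0$ large enough that $a=4^{?}<E_-^{m_0}(c)=(4(1-2c))^{m_0}$, which is possible precisely because $4(1-2c)$ can be made to dominate the per-step branching $4$ only after passing to a power; here I would invoke the fact (used in \cite{li2021intermittent}) that on the diagonal-avoiding region the genuine per-step multiplication of components is strictly less than $E_-$ on average, so that a finite $m_0$ with $a<E_-^{m_0}(c)$ exists for every $c<\frac14$. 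Fix such $(\delta_1,m_0,a)$ and pick $\delta_1$ small enough that any convex set $\Omega$ with $M(\Omega)>\delta_1 M(D)$ automatically meets $O_\epsilon$ — this is the geometric input: a convex subset of $[0,1]^2$ disjoint from the $\epsilon$-neighborhood of the diagonal lies in one of the two triangles $D_\epsilon$, each of area $\le \tfrac12$, so its measure is bounded by a constant $<\tfrac12$; choosing $\delta_1$ below that constant forces the intersection.

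With these choices, the argument proceeds by a standard exhaustion. Given an arbitrary convex $S\subset[0,1]^2$: if $M(S)>\delta_1 M(D)$ then $S$ itself meets $O_\epsilon$ and, since $T$ is continuous and preserves convexity piecewise, one can first split $S$ into convex pieces of measure $\le\delta_1 M(D)$ and handle each separately; so without loss of generality $M(S)\le\delta_1 M(D)$ and $S$ is $(\delta_1,m_0,a)$-good. Now apply Corollary \ref{Coro:diedai}: there is $c_1>0$ and disjoint convex subsets $\Omega_i\subset S$ with iteration times $k(i)$ such that $T^{k(i)}(\Omega_i)$ is a component of $T^{k(i)}(S)$ with $M(T^{k(i)}(\Omega_i))\ge 2^{-\mu^{N_0}}M(D)$, and $M(\bigcup_i\Omega_i)\ge c_1 M(S)$. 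By the note following Corollary \ref{Coro:diedai}, letting $\mu\to1$ gives the explicit bound $2^{-\mu^{N_0}}\ge \log_{E_+(c)}(E_-(c)a^{-1/m_0})F(c)^{-1}$, a positive constant independent of $S$; if this constant already exceeds the threshold $\delta_1$ chosen above (shrinking $\delta_1$ further if necessary to make the geometric step work while keeping $2^{-\mu^{N_0}}M(D)>\delta_1 M(D)$ impossible — instead one argues that $M(T^{k(i)}(\Omega_i))$ is large enough that the convex component $T^{k(i)}(\Omega_i)$ cannot fit inside a triangle $D_\epsilon$), then each component $T^{k(i)}(\Omega_i)$ meets $O_\epsilon$. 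Since $T^{k(i)}(\Omega_i)$ is a single component, hence contained in one square $Q_J$ at each earlier step, and $T^{k(i)}:\Omega_i\to T^{k(i)}(\Omega_i)$ is a homeomorphism, the preimage inside $\Omega_i$ of the (relatively open, nonempty) set $T^{k(i)}(\Omega_i)\cap O_\epsilon$ is a nonempty measurable subset $S_i'$ with $T^{k(i)}(S_i')\subset O_\epsilon$; but we want $l_i$ with $T^{l_i}(S_i)\subset O_\epsilon$, so I would instead apply the whole construction inside the convex set $T^{k(i)}(\Omega_i)\cap O_\epsilon$ is not needed — rather, since a positive-measure convex component necessarily intersects $O_\epsilon$, I take $S_i\subset\Omega_i$ to be $T^{-k(i)}$ of a sub-convex-set of $T^{k(i)}(\Omega_i)$ that lies entirely in $O_\epsilon$ (possible because $O_\epsilon$ contains a ball around each of its interior points and $T^{k(i)}(\Omega_i)$ has measure bounded below, hence contains such a ball once we refine once more), and set $l_i=k(i)$. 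The measure loss at this last refinement is by a fixed factor depending only on $\epsilon$ and the lower measure bound $2^{-\mu^{N_0}}M(D)$, so multiplying it by $c_1$ yields the constant $c_0=c_0(c,\epsilon)\in(0,1)$ of the statement, and the $\{S_i\}$ are disjoint because the $\{\Omega_i\}$ are. Finally, re-assembling across the initial splitting of $S$ into convex pieces of small measure preserves the constant $c_0$ since it applies to each piece uniformly.

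The main obstacle I anticipate is the bookkeeping in the last step: turning ``the large convex component $T^{k(i)}(\Omega_i)$ intersects $O_\epsilon$'' into ``a definite fraction of $\Omega_i$ maps into $O_\epsilon$ after $k(i)$ steps,'' uniformly in $S$. This requires a quantitative isoperimetric-type fact — that a convex set of area at least $\eta$ inside $[0,1]^2$ contains a ball (hence a sub-convex-set) of radius at least $\rho(\eta)>0$ intersected with $O_\epsilon$ of measure at least $\kappa(\eta,\epsilon)>0$ — plus the control $\|JT^{k(i)}\|^{-2}\ge$ (something) / distortion bounds to push this fraction back to $\Omega_i$; since $T$ is piecewise affine with uniformly bounded Jacobian on each $Q_J$ and $T^{k(i)}:\Omega_i\to T^{k(i)}(\Omega_i)$ is affine on $\Omega_i$ (as $\Omega_i$ stays in a single square at every step by definition of ``component''), the distortion is in fact trivial — the Jacobian is constant on $\Omega_i$ — so $M(S_i)/M(\Omega_i)=M(\text{sub-set of }T^{k(i)}(\Omega_i)\subset O_\epsilon)/M(T^{k(i)}(\Omega_i))\ge\kappa(\eta,\epsilon)/M(D)$, a uniform positive constant. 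This trivialization of distortion (a consequence of piecewise linearity, exactly the feature exploited throughout the paper) is what makes the scheme go through cleanly, and I would make sure to state it explicitly before the final estimate. The case $c\in(\frac34,1]$ is handled identically after the involution $x\mapsto 1-x$ in one coordinate, which conjugates the coupling parameter $c$ to $1-c$.
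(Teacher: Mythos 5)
Your reduction to Corollary~\ref{Coro:diedai} breaks down at the final, decisive step. The components $T^{k(i)}(\Omega_i)$ delivered by the corollary have measure bounded below only by $2^{-\mu^{N_0}}M(D)$, a small positive constant, and your geometric input --- that a convex component of such measure ``cannot fit inside a triangle $D_\epsilon$'' and therefore meets, indeed contains a ball inside, $O_\epsilon$ --- is false. The set $D_\epsilon$ consists of two triangles each of area close to $\frac{1}{2}$, so a convex set of any measure up to nearly $\frac{1}{2}$ (a small disk near the corner $(1,0)$, say) avoids $O_\epsilon$ entirely; choosing $\delta_1$ small only makes this worse, since you would need the component's measure to \emph{exceed} the triangle's area, not fall below some threshold. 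This is precisely the gap the paper fills with Lemmas~\ref{lemma11} and~\ref{lemma9}: once a component reaches a definite measure $\theta_1$, one keeps iterating; either it splits into at most two pieces per step, so a largest piece grows geometrically by the factor $2(1-2c)>1$ until its image meets all four regions, or it splits into three or more, which forces it to meet the invariant anti-diagonal $x_1+x_2=1$, along which its trace grows until the set contains $(\frac12,\frac12)$. In either case the convexity/cone argument of Lemma~\ref{lemma9} then yields the uniform fraction $\frac{\epsilon^2}{2}$ of the component lying in $O_\epsilon$, and that is what actually produces $c_0(c,\epsilon)$. Your proposal has no substitute for this second stage.

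A second, independent gap is the verification that convex sets are $(\delta_1,m_0,a)$-good with $a<E_-^{m_0}(c)$. You correctly observe that the naive per-step bound of $4$ components defeats $E_-(c)=4(1-2c)$ for every $c>0$ and at every power $m_0$, but you then simply assert that an admissible finite pair $(m_0,a)$ exists. Establishing this is the main technical content of the paper's Lemma~\ref{lemma10}: one passes to a bounding rectangle, splits into the case of small diameter (at most $2$ components per step, unless the image already meets $O_\epsilon$) and the case of a long thin rectangle, and in the latter tracks the evolution of slopes and counts ``long'' versus ``short'' components to obtain the bound $\frac{2d_l}{\epsilon}(i+1)2^{i}$ on the number of components after $i$ steps, which $(4(1-2c))^{i}$ eventually dominates because $4(1-2c)>2$ for $c<\frac14$. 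Without this (or an equivalent) argument the hypothesis of Lemma~\ref{Lem:diedai} is never verified and the scheme does not start. (Your remarks on trivial distortion along a single component are correct but do not repair either gap.)
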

	
	  \noindent \textbf{Proof  Theorem \ref{thm5} from Lemma \ref{lemma6}:}
	
	 	Let \( S \subset [0,1]^2 \) be an arbitrary convex set.
	 	By Lemma \ref{lemma6}, there exists a collection of disjoint measurable subsets \( \{S_i^{(1)}\}_{i=1}^\infty \) satisfying properties $\mathit{1}$ and $\mathit{2}$.
	 	The remainder set \( S \setminus \bigcup_{i=1}^\infty S_i^{(1)} \) can be decomposed into countably many disjoint convex subsets \( \{S_j'\}_{j=1}^\infty \), with the measure estimate:
	 	\[ M\left(\bigcup_{j=1}^\infty S_j'\right) \leq (1 - c_0)M(S). \]
	 	
	 	We proceed inductively.
	 	At the \( n \)-th stage, applying Lemma \ref{lemma6} to each \( S_j' \) yields subsets \( \{S_{j,k}^{(n)}\} \) such that:
	 	\[ M\left(\bigcup_{k=1}^\infty S_{j,k}^{(n)}\right) \geq c_0 M(S_j') \]
	 	and \( T^{l(j,k)}(S_{j,k}^{(n)}) \subset O_\epsilon \) for some \( l(j,k) \in \mathbb{N} \).
	 	The remaining portion satisfies:
	 	\[ M\left(S \setminus \bigcup_{k=1}^\infty S_{j,k}^{(n)}\right) \leq (1 - c_0)^n M(S). \]
	 	
	 	Taking the limit as \( n \to \infty \), we conclude that the set of points in \( S \) whose orbits never enter \( O_\epsilon \) has measure zero. An application of Fubini's Theorem completes the proof of Theorem \ref{thm5}.
      \qed
	
	 %%%--------------------------------------------------------------------------------------------------------------------------------------------
	 %%%--------------------------------------------------------------------------------------------------------------------------------------------
	 To prove equation (\ref{equation:5}), which corresponds to the disordered part of Thm. \ref{Th1}, we need to show that for almost every point in the phase space, its orbit, while visiting any neighborhood of $S_{inv}$, will also remain arbitrarily distant from $S_{inv}$ infinitely often. It suffices to demonstrate that

	 \begin{theorem}\label{thm7}
	 	For almost every point~$(x_{1}(0),x_{2}(0))$~in the phase space, there exists an~$n(x_{1}(0),x_{2}(0))$~such that~$(x_{1}(n),x_{2}(n))\in D_{r_{0}}$~with~a constant $r_{0}>0$.
	 	%$r_{0}=\frac{\theta_{3}}{3}$, $\theta_{3}>0$.
	 \end{theorem}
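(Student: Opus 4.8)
The plan is to mirror the structure of the ordered-part argument but now aim the orbits \emph{away} from the diagonal instead of toward it. The key point is that the iterative machinery of Corollary \ref{Coro:diedai} produces, for any good convex set \( S \), a collection of disjoint subsets each of which is eventually stretched by \( T \) into a \emph{component of full-scale measure} — i.e. a set whose measure is at least \( 2^{-\mu^{N_0}}M(D) \) for the chosen reference domain \( D \). Since such a blown-up component occupies a fixed positive fraction of \( D \), and since no component of positive measure can be contained in an arbitrarily thin tube \( O_\epsilon \) around \( S_{\text{inv}} \), once a subset is stretched to full scale its image must poke outside some fixed neighborhood \( O_{r_0} \) of the diagonal; that is, it must meet \( D_{r_0} \). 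So I would first fix \( D = [0,1]^2 \) (or a convenient sub-square disjoint in interior from \( S_{\text{inv}} \)), invoke Corollary \ref{Coro:diedai} to get the constant \( 2^{-\mu^{N_0}} \), and then \emph{choose} \( r_0 > 0 \) small enough that any convex (or simple-curve) component of measure at least \( 2^{-\mu^{N_0}}M(D) \) cannot fit inside \( O_{r_0} \). This gives the analogue of Lemma \ref{lemma6}: for every convex \( S\subset[0,1]^2 \) there is a constant \( c_0 > 0 \) and disjoint measurable subsets \( \{S_i\} \) of \( S \), each satisfying \( T^{l_i}(S_i)\cap D_{r_0}\neq\emptyset \) for some \( l_i\in\mathbb{N} \), with \( M(\bigcup_i S_i)\ge c_0 M(S) \). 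In fact one can upgrade ``\( T^{l_i}(S_i)\cap D_{r_0}\neq\emptyset \)'' to ``\( T^{l_i}(S_i)\) contains a subset of definite measure inside \( D_{r_0} \)'', but the weaker statement already suffices to force \( (x_1(n),x_2(n))\in D_{r_0} \) for some \( n \).

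From this ``escape lemma'' Theorem \ref{thm7} follows by exactly the Fubini/exhaustion scheme already used to derive Theorem \ref{thm5} from Lemma \ref{lemma6}: starting from any convex \( S \), peel off the subsets whose orbit visits \( D_{r_0} \), decompose the leftover into countably many convex pieces, apply the escape lemma again, and iterate; the measure of the ``never escapes \( O_{r_0} \)'' set is bounded by \( (1-c_0)^n M(S)\to 0 \). Applying this with \( S \) ranging over a convex partition of \( [0,1]^2 \) (and Fubini to handle the curve/measure dichotomy) yields that for almost every initial point \( (x_1(0),x_2(0)) \) there is some \( n \) with \( (x_1(n),x_2(n))\in D_{r_0} \). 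Combined with the already-established Theorem \ref{thm5} (applied repeatedly to reenter every \( O_\epsilon \)) and invariance considerations, this gives the full intermittent-synchronization statement (\ref{equation:5}); I would phrase the deduction of (\ref{equation:5}) from Theorem \ref{thm7} in the same two-line style as the earlier deduction of (\ref{equation:4}) from Theorem \ref{thm5}, using sets \( \tilde D_i=\{\text{dist}(\bm x(n),S_{\text{inv}})<\tfrac1i\ \text{for all large }n\} \) and showing each has measure zero by taking \( r_0 \) fixed.

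The main obstacle — and the place where the honest work lies — is establishing the hypotheses of the Iterative Lemma uniformly, i.e. showing that for \( c\in[0,\tfrac14) \) the relevant sets encountered along the orbit are \( (\delta_1,m_0,a)\)-good with parameters satisfying \( a<E_-^{m_0}(c) \). Here \( E_-(c)=4(1-2c) \) for a measurable set, so \( E_-(c)>1 \) exactly on \( [0,\tfrac14)\cup(\tfrac34,1] \), which is consistent with the theorem, but one still must control the number of components: each application of \( T \) can split a convex piece along the lines \( x_1=\tfrac12 \), \( x_2=\tfrac12 \), so a single convex set can have up to \( 4 \) images, and after \( m_0 \) steps up to \( 4^{m_0} \) — too many unless one exploits geometry. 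The resolution (as in \cite{li2021intermittent}) is to observe that the images are not arbitrary convex sets but convex polygons whose edges have tightly constrained slopes, so that the \emph{actual} branching rate is far below \( 4 \) per step; pinning down this combinatorial bound on \( \#\{\text{convex subdomains}\} \), and checking it persists under the discontinuity of the Lorenz map (which is precisely the new difficulty flagged in the introduction), is the crux. A secondary, more routine obstacle is the bookkeeping needed to guarantee that the subsets \( S_i \) supplied by Corollary \ref{Coro:diedai} are genuinely disjoint and measurable and that their full-scale images actually meet \( D_{r_0} \) for the \emph{same} \( r_0 \) independent of \( S \); this is handled by the uniform lower bound \( 2^{-\mu^{N_0}}M(D) \) on component measures, which depends only on \( c \).
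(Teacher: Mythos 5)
Your high-level strategy (use Corollary \ref{Coro:diedai} to blow subsets up to a definite scale, argue that definite-scale sets must leave $O_{r_0}$, then exhaust) matches the paper's, but the paper executes it on one-dimensional segments of slope $-1$ --- Theorem \ref{thm7} is reduced by Fubini to Lemma \ref{lemma8}, which is proved from Lemmas \ref{lemma12} and \ref{lemma13} --- whereas you work with two-dimensional convex sets. Two steps of your version do not hold as written. First, the claim that ``$T^{l_i}(S_i)\cap D_{r_0}\neq\emptyset$ already suffices'' is false: the peeling argument needs a \emph{definite fraction of the measure of each $S_i$} to consist of points whose orbits actually enter $D_{r_0}$; nonempty intersection of the image with $D_{r_0}$ only guarantees that some possibly Lebesgue-null subset of $S_i$ escapes, so $M(\bigcup_i S_i)\geq c_0M(S)$ says nothing about the measure of escaping points and the induction $(1-c_0)^nM(S)\to 0$ never starts. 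The ``upgrade'' you set aside is therefore mandatory; it is exactly the content of Lemma \ref{lemma13}, where a slope $-1$ segment of length $\geq 3r_0$ is shown to have at least a third of its length in $D_{r_0}$, a fraction that pulls back through the piecewise linear branches. Second, your parenthetical ``(or simple-curve)'' hides a real obstruction: a curve of definite length can lie entirely inside $O_{r_0}$ for every $r_0>0$ if it runs parallel to the diagonal, so ``definite size implies exits $O_{r_0}$'' fails for curves. The paper avoids this precisely by restricting to slope $-1$, which is both transverse to $S_{inv}$ and invariant under $T$ (the slope map $k\mapsto (c+(1-c)k)/((1-c)+ck)$ fixes $k=-1$). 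For genuinely two-dimensional sets your area count (choose $r_0$ with $M(O_{r_0})$ at most half the guaranteed component measure) does repair the first point, but then the Fubini step you invoke is superfluous and the curve case is never handled.

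Beyond that, you correctly identify the verification of $(\delta_1,m_0,a)$-goodness with $a<E_-^{m_0}(c)$ as the crux, but you leave it entirely undone. In the paper this is Lemma \ref{lemma12}: using the geometry of $O_{r_0}$ near $(\tfrac12,\tfrac12)$, $(0,0)$, $(1,1)$ and the choice $(2(1-2c))^i\theta_2<2r_0<2^{-i}$, a short slope $-1$ segment is shown to produce at most $3$ components inside $O_{r_0}$ after $M=\lfloor\log_{2(1-2c)}3\rfloor+1$ steps, which beats $E_-(c)^M=(2(1-2c))^M>3$ and lets the Iterative Lemma run. Without such a count nothing can be concluded, so as written the proposal is a plan rather than a proof. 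A minor further point: your sets $\tilde D_i$ with threshold $\tfrac1i$ would only give $\limsup_n \mathrm{dist}(\bm{x}(n),S_{inv})>0$ pointwise, while the theorem requires a single uniform $r_0$, which the paper obtains by fixing $r_0=\theta_3/3$.
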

	 %%%--------------------------------------------------------------------------------------------------------------------------------------------
	 %%%--------------------------------------------------------------------------------------------------------------------------------------------
	 %%%--------------------------------------------------------------------------------------------------------------------------------------------
	
	 \noindent \textbf{Proof (\ref{equation:5}) from Theorem \ref{thm7}:}
	
	 Let
	 $$
	 S_0 = \left\{ (x_1(0), x_2(0)) \in [0,1]^2 \mid (x_1(i), x_2(i)) \in O_{r_0} \text{ for all } i \right\}.
	 $$
	 Then, by  Thm. \ref{thm7} we have that \( M(S_0) = 0 \).
	
	 Let
	 $$
	 S_n = \left\{ (x_1(0), x_2(0)) \in [0,1]^2 \mid (x_1(i), x_2(i)) \in O_{r_0} \text{ for all } i > n \right\}
	 $$
	 be the subset of \( [0,1]^2 \) such that for each point \( \bm{q} \in S_n \) and each \( i > n \), \( T^i(\bm{q}) \) always stays in \( O_{r_0} \).
	
	 From the definition, we have that \( T^{n+1}(S_n) \subset S_0 \).
	
	 If \( M(S_n) > 0 \), then there exists \( \widetilde{S}_n \subset S_n \) with \( M(\widetilde{S}_n) > 0 \) such that the map \( T^{n+1}: \widetilde{S}_n \to T^{n+1}(\widetilde{S}_n) \subset S_0 \) is a diffeomorphism.
	
	 This implies
	 $$
	 M(S_0) \geq M(T^{n+1}(\widetilde{S}_n)) > 0.
	 $$
	 However, this contradicts the fact that \( M(S_0) = 0 \). Hence, \( M(S_n) = 0 \) for each \( n \), which leads to the  equation  (\ref{equation:5}).
	 \qed
	 %%%--------------------------------------------------------------------------------------------------------------------------------------------
	
	 The proof of Theorem~\ref{thm7} reduces to establishing the following statement by Fubini's Theorem:
	 for  almost every segment $L \subset [0,1]^2$ with slope $-1$, the orbit of almost every point $x \in L$ satisfies $T^n(x) \in D_{r_0}$ for some finite iteration time $n \in \mathbb{N}$.
	 Therefore, it is sufficient to prove the following lemma.
	 \begin{lemma}\label{lemma8}
	 	For any $0 \leq c < \frac{1}{4}$, there exists $0 < c_1 \equiv c_1(c) < 1$ such that for almost each segment $L$ with a slope $-1$ in $[0, 1]^2$, there exist its disjoint subsegments $L_1, L_2, \cdots$, satisfying
	 	\begin{enumerate}
	 		\item for any $i$, there exists $l_i \geq 0$ such that $T^{l_i}(L_i) \subset D_{r_0}$;
	 		\item  $M(\cup_i L_i) \geq c_1 M(L)$.
	 	\end{enumerate}
	 \end{lemma}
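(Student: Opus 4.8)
We propose to establish Lemma~\ref{lemma8} by the following scheme.

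\textbf{Reduction and overall strategy.} By Fubini's theorem it is enough to cover a definite fraction of almost every antidiagonal segment $L\subset[0,1]^2$ by subsegments whose orbits reach $D_{r_0}$; iterating the same estimate on the uncovered part (again a union of antidiagonal subsegments) then upgrades this to ``$T^n(x)\in D_{r_0}$ for some finite $n$, for a.e.\ $x$ on a.e.\ slice'', exactly as Theorem~\ref{thm7} requires (the bootstrap being the one already used to deduce Theorem~\ref{thm5} from Lemma~\ref{lemma6}). The whole argument hinges on the observation that, for $f(x)=2x\bmod1$, the restriction of $T$ to each square $Q_J$ of the partition by $\{x_1=\tfrac12\}$ and $\{x_2=\tfrac12\}$ is affine, equal to $2(I+cA)$ up to translation; since $2(I+cA)$ fixes the diagonal direction $(1,1)$ and stretches the transverse direction $(1,-1)$ by exactly $2(1-2c)$, every branch of $T$ carries an antidiagonal segment to an antidiagonal segment and, on each of its components, $T^{k}$ multiplies length by exactly $(2(1-2c))^{k}$. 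Consequently $E_{+}(c)=2$ and $E_{-}(c)=2(1-2c)$ for curves, so $E_{+}(c)\geq E_{-}(c)>1$ for all $c\in[0,\tfrac14)$ and Corollary~\ref{Coro:diedai} applies with $D=[0,1]^2$. The plan is: (i) verify that short antidiagonal subsegments of a.e.\ $L$ are $(\delta_1,m_0,a)$-good; (ii) use Corollary~\ref{Coro:diedai} to grow each such piece into a component of definite length, which, being antidiagonal, must protrude from the thin tube $O_{r_0}$; (iii) pull the protruding part back and assemble.

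\textbf{Step (i): the good property --- the main obstacle.} Since $2(1-2c)>1$ we may fix $m_0,a\in\N$ with $a<(2(1-2c))^{m_0}=E_{-}^{m_0}(c)$ (for $c$ close to $\tfrac14$ this forces $m_0$ to be large). The task is then to produce $\delta_1\in(0,1)$ so that, outside a null set of antidiagonal segments $L$, every subsegment $\Omega\subset L$ lying in a single square with $M(\Omega)\leq\delta_1 M(D)$ is $(\delta_1,m_0,a)$-good: every component $S_1$ of every $T^{j}(\Omega)$ with $M(S_1)\leq\delta_1 M(D)$ has $T^{m_0}(S_1)$ with at most $a$ components. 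Writing $\Gamma=\{x_1=\tfrac12\}\cup\{x_2=\tfrac12\}$, the number of components of $T^{m_0}(S_1)$ equals $1+\#\bigl(S_1\cap\bigcup_{j=0}^{m_0-1}T^{-j}(\Gamma)\bigr)$, and $\bigcup_{j=0}^{m_0-1}T^{-j}(\Gamma)$ is a \emph{fixed finite} union of line segments. I would estimate the number, total length and directions of these cut loci via the eigenstructure of $2(I+cA)$ (whose inverse is a contraction with least-contracted direction $(1,-1)$), and then show that, for $\delta_1$ small enough relative to $m_0$ and $c$, a short antidiagonal segment in general position meets this family at most $a-1$ times; the exceptional $L$'s are those in resonant position with these finitely many loci, a null set of slices. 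The hard part will be precisely this count: the family $\bigcup_{j}T^{-j}(\Gamma)$ proliferates because $f$ is discontinuous, so keeping the component number below $E_{-}^{m_0}(c)$ demands a careful, map-specific accounting together with the coordinated choice of $m_0$ and of a correspondingly small $\delta_1$. This is the analogue, for the maps of Theorem~\ref{Th1}, of the substantial modification of \cite{li2021intermittent} announced in the Introduction, and it is the heart of the proof.

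\textbf{Step (ii): growing a short piece and landing in $D_{r_0}$.} Given a good $\Omega$ with $M(\Omega)\leq\delta_1 M(D)$, Corollary~\ref{Coro:diedai} produces disjoint subsegments $\Omega_j\subset\Omega$ and times $k(j)$ with $T^{k(j)}(\Omega_j)$ a single component of $T^{k(j)}(\Omega)$, $M\bigl(T^{k(j)}(\Omega_j)\bigr)\geq\rho_0:=2^{-\mu^{N_0}}M(D)>0$, and $M\bigl(\bigcup_j\Omega_j\bigr)\geq c_{\ast}M(\Omega)$ with $c_{\ast}=c_{\ast}(c)>0$. I would then fix $r_0:=\rho_0/4$ once and for all. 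Along any antidiagonal segment $\sigma$ parametrised by arclength $s$ one has $\mathrm{dist}(\sigma(s),S_{\text{inv}})=|x_1(s)-x_2(s)|/\sqrt2$, a piecewise-linear function of $s$ with slopes $\pm1$, so $\{\,s:\sigma(s)\in O_{r_0}\,\}$ is an interval of length at most $2r_0$. Applied to $\sigma=T^{k(j)}(\Omega_j)$, which is antidiagonal of length $\geq\rho_0>2r_0$, this yields a subsegment $\sigma_j\subset T^{k(j)}(\Omega_j)$ with $M(\sigma_j)\geq(\rho_0-2r_0)/2\geq\rho_0/4$ and $\sigma_j\subset D_{r_0}$. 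Since $T^{k(j)}$ is affine on $\Omega_j$ with constant length-expansion $(2(1-2c))^{k(j)}$, the pullback $\widehat\Omega_j:=\bigl(T^{k(j)}|_{\Omega_j}\bigr)^{-1}(\sigma_j)\subset\Omega_j$ satisfies $M(\widehat\Omega_j)/M(\Omega_j)=M(\sigma_j)/M\bigl(T^{k(j)}(\Omega_j)\bigr)\geq(\rho_0/4)/\sqrt2=:\kappa>0$ and $T^{k(j)}(\widehat\Omega_j)=\sigma_j\subset D_{r_0}$.

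\textbf{Step (iii): assembly.} Then $M\bigl(\bigcup_j\widehat\Omega_j\bigr)=\sum_j M(\widehat\Omega_j)\geq\kappa\,M\bigl(\bigcup_j\Omega_j\bigr)\geq\kappa c_{\ast}M(\Omega)$. For almost every antidiagonal $L$ I would decompose $L$, up to a null set, into countably many good subsegments $\Omega^{(p)}$ as in Step~(i), apply Steps~(i)--(ii) to each, and collect the resulting families; this gives disjoint subsegments $L_i\subset L$ with $T^{l_i}(L_i)\subset D_{r_0}$ for suitable $l_i\geq0$ and $M\bigl(\bigcup_iL_i\bigr)\geq\kappa c_{\ast}\sum_pM(\Omega^{(p)})=\kappa c_{\ast}M(L)$, so $c_1:=\kappa c_{\ast}\in(0,1)$ works. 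To summarise, everything reduces to Step~(i) --- controlling, for short antidiagonal segments, the number of components created by the finitely many cut loci $\bigcup_{j<m_0}T^{-j}(\Gamma)$ so that it stays below $E_{-}^{m_0}(c)$ --- whereas Steps~(ii)--(iii) are the routine invocation of Corollary~\ref{Coro:diedai} together with the elementary geometry of antidiagonal chords of the tube $O_{r_0}$.
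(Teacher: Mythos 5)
Your overall architecture --- grow short antidiagonal pieces with Corollary~\ref{Coro:diedai} until they reach a definite length $\rho_0$, then use the fact that an antidiagonal segment of length $\rho_0$ meets the tube $O_{r_0}$ (with $r_0$ a fixed fraction of $\rho_0$) in an interval of length at most $2r_0$, pull back and assemble --- is the same as the paper's, and your Steps (ii)--(iii) reproduce, essentially verbatim, the paper's Lemma~\ref{lemma13} and the deduction of Lemma~\ref{lemma8} from Lemmas~\ref{lemma12} and~\ref{lemma13}. The problem is Step~(i). You correctly identify that everything reduces to verifying the $(\delta_1,m_0,a)$-good property with $a<E_-^{m_0}(c)=(2(1-2c))^{m_0}$, you call it ``the heart of the proof'', and then you do not prove it: the passage beginning ``I would estimate the number, total length and directions of these cut loci\dots'' is a plan, not an argument. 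Since for $c$ near $\frac14$ the base $2(1-2c)$ is barely above $1$, the admissible $a$ grows very slowly in $m_0$ while a single application of $T$ can triple the number of components of an antidiagonal segment, so this count is exactly where the lemma can fail; leaving it open leaves the lemma unproved.

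Moreover, the route you sketch for Step~(i) is not the paper's and has a structural difficulty. You propose a static transversality count of $\#\bigl(S_1\cap\bigcup_{j<m_0}T^{-j}(\Gamma)\bigr)$ valid for segments ``in general position'', with the exceptional $L$'s forming a null set. But the good property must hold for \emph{every} component $S_1$ of \emph{every} iterate $T^{j}(\Omega)$, and these components are placed by the dynamics, not by $L$ alone, so genericity of $L$ does not control their position relative to the cut loci; you would need a bound uniform over all short antidiagonal segments, which requires controlling how the roughly $4^{j}$ segments of $T^{-j}(\Gamma)$ (which align with the antidiagonal direction, since $(1,-1)$ is the dominant eigendirection of $(2(I+cA))^{-1}$) can cluster. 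The paper avoids this entirely: in Lemma~\ref{lemma12} it works inside the tube $O_{r_0}$, discards any component that exits into $D_{r_0}$ (such a component has already achieved the goal, which is why the conclusion of Lemma~\ref{lemma12} is a disjunction), and counts only the components \emph{remaining in} $O_{r_0}$. There the partition lines matter only near $(\tfrac12,\tfrac12)$, and a piece passing near $(\tfrac12,\tfrac12)$ is sent into small neighborhoods of the fixed points $(0,0)$ or $(1,1)$ where no further cutting occurs for $M=\lfloor\log_{2(1-2c)}3\rfloor+1$ steps; this yields at most $3<(2(1-2c))^{M}$ surviving components per $M$ iterations, with $\theta_2$ and $r_0$ chosen jointly so that $(2(1-2c))^{i}\theta_2<2r_0<2^{-i}$ for $i\le M$. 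You should either carry out your uniform intersection count in full or adopt this in-tube counting; as written, the proposal has a genuine gap at its decisive step.
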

	 %%%--------------------------------------------------------------------------------------------------------------------------------------------
	 %%%--------------------------------------------------------------------------------------------------------------------------------------------

	 Lemma \ref{lemma6} and Lemma \ref{lemma8} are corollaries of the following Lemmas.
	 %%%--------------------------------------------------------------------------------------------------------------------------------------------
	 %%%--------------------------------------------------------------------------------------------------------------------------------------------
	 %%%--------------------------------------------------------------------------------------------------------------------------------------------
	
\begin{lemma}\label{lemma9}
	If ~$\Omega$~ is a convex set in one of the four small regions of~$[0,1]^{2}$~and~$T(\Omega)$~has an intersection with each small region simultaneously, then~$(\frac{1}{2},\frac{1}{2})$~lies in~$T(\Omega)$.
	
	Moreover, we have that for any~$\epsilon>0$, ~$\frac{M(T(\Omega)\cap O_{\epsilon})}{M(T(\Omega))}\geq \frac{1}{2}\epsilon^{2}$~holds true, where~$O_{\epsilon}=\{\bm{p}\in [0,1]^{2} | dist( \bm{p}, S_{inv})\leq \epsilon\}$.
\end{lemma}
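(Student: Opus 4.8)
The plan is to push everything through convexity of the image. Since $\Omega$ lies inside a single small square $Q_J$, the restriction $T|_{Q_J}$ coincides with the affine map $\bm x \mapsto 2(I+cA)\bm x + \bm b_J$ for a suitable constant $\bm b_J$, whose linear part $2(I+cA)$ has eigenvalues $2$ (eigenvector $(1,1)$) and $2(1-2c)$ (eigenvector $(1,-1)$) and is therefore invertible for $c<\tfrac12$. Consequently $T(\Omega)$ is a convex subset of $[0,1]^2$ — as already recorded just before the statement. Write $K:=T(\Omega)$.

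First I would establish that $(\tfrac12,\tfrac12)\in K$ by a crossing-segment argument. Of the four regions, two lie in $\{x_2\ge\tfrac12\}$ and two in $\{x_2<\tfrac12\}$, and inside each pair one region has $x_1<\tfrac12$ and the other $x_1\ge\tfrac12$. Pick a point of $K$ in each of the four regions. The segment joining the two chosen points with $x_2\ge\tfrac12$ stays in $\{x_2\ge\tfrac12\}$ and has endpoints on opposite sides of the line $x_1=\tfrac12$, so it contains a point $q\in K$ with $q_1=\tfrac12$, $q_2\ge\tfrac12$; the two points with $x_2<\tfrac12$ produce likewise a point $r\in K$ with $r_1=\tfrac12$, $r_2<\tfrac12$. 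Then the segment $\overline{qr}\subset K$ (convexity) lies on $\{x_1=\tfrac12\}$ and runs from a point with $x_2\ge\tfrac12$ to one with $x_2<\tfrac12$, hence contains $(\tfrac12,\tfrac12)$. (This also forces $K$ to be two-dimensional, since a segment meets at most three of the four regions; so $M(T(\Omega))>0$ and the ratio in the second assertion is well defined.)

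For the ``moreover'' part I would use the homothety centered at $p:=(\tfrac12,\tfrac12)$, which is a point of $S_{inv}$. For $\lambda\in(0,1]$ put $\phi_\lambda(\bm x)=p+\lambda(\bm x-p)$. Since $p\in K$ and $K$ is convex, $\phi_\lambda(K)\subset K$; since $\phi_\lambda$ is a planar homothety of ratio $\lambda$, $M(\phi_\lambda(K))=\lambda^2 M(K)$; and since $\phi_\lambda$ fixes $p\in S_{inv}$ and maps the line $S_{inv}$ onto itself, distances to $S_{inv}$ scale exactly by $\lambda$, i.e. $\mathrm{dist}(\phi_\lambda(\bm x),S_{inv})=\lambda\,\mathrm{dist}(\bm x,S_{inv})$. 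On $[0,1]^2$ one has $\mathrm{dist}(\cdot,S_{inv})\le1$, so with $\lambda=\epsilon$ (we may assume $\epsilon\le1$) we get $\phi_\epsilon(K)\subset O_\epsilon$, whence $\phi_\epsilon(K)\subset K\cap O_\epsilon$ and
\[
M(T(\Omega)\cap O_\epsilon)\ge M(\phi_\epsilon(K))=\epsilon^2 M(T(\Omega))\ge\tfrac12\epsilon^2\,M(T(\Omega)).
\]

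The computations are routine and the homothety step is essentially automatic once the first claim is in hand; the only place that asks for a little care is that first geometric claim — that a convex set meeting all four regions around the corner $(\tfrac12,\tfrac12)$ must contain the corner — together with checking the degenerate/boundary possibilities so that $M(T(\Omega))>0$. The crossing-segment trick above is the mechanism that makes it work, and it is the same device that will be reused in later lemmas to locate $S_{inv}$ inside images of convex pieces.
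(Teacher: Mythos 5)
Your proof is correct. For the first claim you and the paper do essentially the same thing: the paper picks one point of $T(\Omega)$ in each of the four regions and asserts that $(\tfrac12,\tfrac12)$ lies in their convex hull, while your crossing-segment argument is just a more explicit justification of that assertion (and your side remark that a segment can meet at most three of the four regions, so that $M(T(\Omega))>0$, fills a small well-definedness point the paper leaves implicit). For the ``moreover'' part, however, your route is genuinely different. The paper argues radially: it takes the union $L$ of segments joining $(\tfrac12,\tfrac12)$ to $H_\epsilon\cap T(\Omega)$, notes each such segment has length at most $\sqrt2$ inside $[0,1]^2$ and at least $\epsilon$ inside $O_\epsilon$, and integrates this one-dimensional estimate over the pencil of lines to get the factor $(\epsilon/\sqrt2)^2$. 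You instead apply the homothety $\phi_\epsilon$ centered at $(\tfrac12,\tfrac12)\in S_{inv}$, using convexity to get $\phi_\epsilon(K)\subset K$, the area scaling $M(\phi_\epsilon(K))=\epsilon^2M(K)$, and the exact scaling of $\mathrm{dist}(\cdot,S_{inv})$ under a homothety centered on $S_{inv}$ together with $\mathrm{dist}(\cdot,S_{inv})\le 1/\sqrt2\le 1$ on $[0,1]^2$ to conclude $\phi_\epsilon(K)\subset O_\epsilon$. This is cleaner — it avoids the slightly delicate cone-measure computation — and it actually yields the stronger constant $\epsilon^2$ in place of $\tfrac12\epsilon^2$; the same homothety device would transplant directly to the nine-region analogue (Lemma 15). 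Both arguments implicitly assume $\epsilon$ is not large (for $\epsilon>\sqrt2$ the stated ratio bound exceeds $1$ and cannot hold), but that is a feature of the lemma's statement, not of your proof.
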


\begin{proof}
	We have set four regions of~$[0,1]^{2}$~as region 1, 2, 3 and 4, as shown in Fig. \ref{Fig.neighborhood}.
	Because~$T(\Omega)$~intersects with four regions, the points~$A_{J_{1}}$,~$A_{J_{2}}$,~$A_{J_{3}}$~and~$A_{J_{4}}$~in each of the four regions belong to~$T(\Omega)$.
	From the nature of the convex set, we can know that the convex hull determined by these points is a subset of~$T(\Omega)$~and then the point~$(\frac{1}{2},\frac{1}{2})$~is in it.
	
	Moreover, we set~$H_{\epsilon}$~be the line~$$\{\bm{p}\in[0,1]^{2}|dist(\bm{p},S_{inv})=\epsilon\}$$~and define~$\overline{H}_{\epsilon}=H_{\epsilon}\cap T(\Omega)$.
	Furthermore, we set~$L$~be the point set of the union of all lines connecting~$\overline{H}_{\epsilon}$~and the point~$(\frac{1}{2},\frac{1}{2})$.
	From the above conclusion, it can be seen that ~$(\frac{1}{2},\frac{1}{2})\in T(\Omega)$~and if~$\overline{H}_{\epsilon}$~is empty, then~$T(\Omega)\subset O_{\epsilon}$~and the proof is complete from the convexity of~$T(\Omega)$.
	Thus, we can assume that~$\overline{H}_{\epsilon}$~is nonempty. then we get~$T(\Omega)\backslash(L\cap T(\Omega))\subset T(\Omega)\cap O_{\epsilon}$~obviously.
	
	Then we need to focus the analysis on~$L \cap T(\Omega)$.
	Again by the convexity, we have that~$L \cap O_{\epsilon}\subset T(\Omega)$~which further implies~$L \cap O_{\epsilon}=(L\cap T(\Omega))\cap O_{\epsilon}$.
  For our purpose, it is sufficient to estimate~$\frac{M(L \cap O_{\epsilon})}{M(L \cap T(\Omega))}$. Note that~$L \cap T(\Omega)\subset L\cap [0,1]^{2}$. For each line~$l\in L$, It is easily to see that the length~$l\cap [0,1]^{2}$~is less than~$\sqrt{2}$~while the length of~$l\cap O_{\epsilon}$~is larger than~$\epsilon$. thus we have that:
$$
		M((L\cap T(\Omega))\cap O_{\epsilon} )\\  \geq (\frac{\epsilon}{\sqrt{2}})^{2}M(L \cap [0,1]^{2})
		\geq \frac{\epsilon^{2}}{2}M(L\cap T(\Omega))
$$
	Since ~$\frac{M( T(\Omega)\cap O_{\epsilon} )}{M(T(\Omega))}>\frac{M((L\cap T(\Omega))\cap O_{\epsilon} )}{M(L\cap T(\Omega))}\geq \frac{\epsilon^{2}}{2}$,  we have completed the whole certificate.
\end{proof}	
	
\begin{lemma}\label{lemma10}
		For any $0 \leq c < \frac{1}{4}$, $O_{\epsilon}$ (the $\epsilon$-neighborhood of $S_{inv}$ defined above) and any convex set~$\Omega_{0}$~in one of the four regions with measure less than~$0.5\cdot 10^{-4} \epsilon^{2}$, there exist constants ~$\theta_{1}, \alpha_{1}>0$~and disjoint convexities~$\Omega_{i}\subset \Omega_{0} ,i=1,2,\cdots$~satisfying the following two conclusions:
		\begin{enumerate}
			\item for any~$i\geq 1$, there exists a~$t(i)$~such that~$T^{t(i)}(\Omega_{i})$~is a component of~$T^{t(i)}(\Omega_{0})$~with~$M(T^{t(i)}(\Omega_{i}))\geq \theta_{1}$;
			\item ~$M(\underset{i\geq 0}{\cup}\Omega_{i})\geq \alpha_{1}M(\Omega_{0})$.
		\end{enumerate}
	\end{lemma}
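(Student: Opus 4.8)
\noindent\textit{Proof idea.}\ The plan is to derive Lemma~\ref{lemma10} from Corollary~\ref{Coro:diedai}, taken with $D=[0,1]^{2}$. For $c<\tfrac14$ one has $1-2c>\tfrac12$, so for measurable sets $E_{+}(c)=E_{-}(c)=4(1-2c)>2>1$, which is the standing hypothesis of the Iterative Lemma. Fix a small $\delta_{1}\in(0,1)$, say $\delta_{1}=10^{-4}$; we may assume $\epsilon\le 1$ (for larger $\epsilon$ one has $O_{\epsilon}=[0,1]^{2}$ and there is nothing to prove), and then the hypothesis $M(\Omega_{0})<0.5\cdot10^{-4}\epsilon^{2}$ forces $M(\Omega_{0})\le\delta_{1}M(D)$, the remaining size requirement of Corollary~\ref{Coro:diedai}. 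Thus everything reduces to exhibiting $m_{0},a\in\mathbb{N}$ with $a<E_{-}^{m_{0}}(c)$ for which every convex set that lies in a single one of the four regions and has measure $\le\delta_{1}M(D)$ is $(\delta_{1},m_{0},a)$-good. Once this is done, Corollary~\ref{Coro:diedai} yields disjoint $\Omega_{i}\subset\Omega_{0}$ and exponents $t(i):=k(i)$ with $T^{t(i)}(\Omega_{i})$ a component of $T^{t(i)}(\Omega_{0})$ of measure $\ge 2^{-\mu^{N_{0}}}M(D)=:\theta_{1}$ and $M(\bigcup_{i}\Omega_{i})\ge c_{1}M(\Omega_{0})=:\alpha_{1}M(\Omega_{0})$; moreover each $\Omega_{i}$ is convex, being the preimage of a convex component under a branch of $T^{t(i)}$ on which $T^{t(i)}$ is a composition of affine cell maps (so it and its inverse preserve convexity). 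This is precisely the assertion of the lemma.

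The decisive step --- and the one I expect to be the real obstacle --- is the combinatorial estimate underpinning $(\delta_{1},m_{0},a)$-goodness: a bound on the number of components of $T^{m_{0}}(S)$ when $S$ is a small convex subset of a single region. Because $E_{-}(c)=4(1-2c)>2$, it is enough to show this number is at most $P(m_{0})\,2^{m_{0}}$ for a fixed polynomial $P$, for then one takes $a=\lceil P(m_{0})\,2^{m_{0}}\rceil$, which satisfies $a<(4(1-2c))^{m_{0}}=E_{-}^{m_{0}}(c)$ once $m_{0}$ is large; the same estimate applies verbatim to every small convex component arising along the orbit, so goodness propagates. The geometry I would exploit is that on each region $Q_{J}$ the map $T=(I+cA)\circ\bm{f}$ is affine and $T(Q_{J})$ equals the fixed parallelogram $P_{c}=(I+cA)([0,1]^{2})$, with centroid $(\tfrac12,\tfrac12)$ and vertices $(0,0),(1,1),(1-c,c),(c,1-c)$, all at distance $\tfrac12-c>\tfrac14$ from the partition lines $x_{1}=\tfrac12$ and $x_{2}=\tfrac12$. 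The crucial consequence is a ``cooldown'' caused by the discontinuity of $f$ at $\tfrac12$: if a small convex component straddles $x_{1}=\tfrac12$ and is cut there, then since $f(\tfrac12-\delta)=1-2\delta$ and $f(\tfrac12+\delta)=2\delta$, together with the shear $I+cA$, the first coordinate of each of their $T$-images lands in $[0,c+o(1)]\cup[1-c-o(1),1]$ as the pieces shrink, hence at distance $>\tfrac14$ from $x_{1}=\tfrac12$, so this line cannot be re-crossed at the next step; and a genuine four-way split, which requires the component to contain $(\tfrac12,\tfrac12)$, scatters the pieces into small neighbourhoods of the vertices of $P_{c}$, away from \emph{both} lines, forcing a full dormant step. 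Amortizing the factor $4$ of a double crossing against the ensuing dormant step, and the factor $\le2$ of a single crossing against itself, the net growth rate of the component count is at most $2$ per step, strictly below the area-expansion rate $4(1-2c)$.

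What remains --- and this is what makes the Lorenz case genuinely harder than the tent-map case of \cite{li2021intermittent} --- is to turn this heuristic into the bound $\#\{\text{components of }T^{m_{0}}(S)\}\le P(m_{0})\,2^{m_{0}}$. One must handle thin, needle-like convex pieces (a thin convex set meets each partition line at most once, so it can at worst double at each step, and a thin diagonal strip, after a split, decomposes into a few half-strips that again behave like strips plus very small ``debris'' pieces that immediately enter the cooldown); and one must control the accumulation of debris, using that the measure of every small component is multiplied by $4(1-2c)>2$ at each step, so it exceeds $\delta_{1}M(D)$ --- and thereby leaves the count --- after only $O(\log(1/M(S)))$ steps, which limits each debris lineage and leaves only the polynomial correction $P(m_{0})$. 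Carrying out this bookkeeping gives a legitimate $a<E_{-}^{m_{0}}(c)$, establishes $(\delta_{1},m_{0},a)$-goodness, and by the first paragraph completes the proof.
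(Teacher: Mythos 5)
Your reduction to Corollary~\ref{Coro:diedai} with $D=[0,1]^2$ is exactly the paper's strategy, and you correctly identify that the whole content of the lemma is the combinatorial estimate: exhibiting $m_0,a$ with $a<\bigl(4(1-2c)\bigr)^{m_0}$ such that $T^{m_0}$ of a small convex piece has at most $a$ components, i.e.\ a bound of the shape $P(m_0)\,2^{m_0}$. But that estimate is precisely the step you leave as an acknowledged heuristic (``what remains \dots is to turn this heuristic into the bound''), so the proposal is incomplete where the lemma actually lives. Worse, the mechanism you propose for it has a concrete hole: the ``cooldown'' after a cut at $x_1=\tfrac12$ only works when the straddling piece has small $x_1$-extent. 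If the left half has $x_1\in[\tfrac12-\delta_L,\tfrac12]$, its image has first coordinate down to $(1-c)(1-2\delta_L)$, which stays above $\tfrac12$ only for $\delta_L<\tfrac{1-2c}{4(1-c)}$; a long thin convex needle crossing the line obliquely can have $x_1$-extent up to $\tfrac12$, and its image pieces then straddle the partition lines again immediately. Since the hypothesis only bounds the \emph{measure} of $\Omega_0$ (and its iterates grow), such needles are unavoidable, so the amortization ``factor $4$ against a dormant step'' does not close. Your ``debris'' bookkeeping gestures at this but is not carried out, and it is not clear it can be along these lines.

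The paper avoids the discontinuity/cooldown argument entirely. It encloses $\Omega_0$ in a rectangle $S_{\Omega_0}$ with $M(S_{\Omega_0})\le 2M(\Omega_0)$, and splits on the rectangle's length $d_l$ versus $\epsilon$. When $d_l<\epsilon$ the image has diameter $<2\epsilon$ and hence at most $2$ components per step ($m_0=1$, $a=2$). When $d_l>\epsilon$ it uses that under $T$ all components of a line acquire the \emph{same} slope, and that alignment with $x_1=\tfrac12$ or $x_2=\tfrac12$ can occur at most once along the orbit; this gives a bound $d_l^i\le 2^{i+1}d_l$ on the \emph{total length of the long edges} of all components. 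Components with long edge $\ge\epsilon$ are then counted by length ($M_i\le \tfrac{d_l}{\epsilon}2^{i+1}$), and the short ones obey $m_i\le 2m_{i-1}+2M_{i-1}$, yielding the total $\le\tfrac{2d_l}{\epsilon}(i+1)2^i$, which beats $(4(1-2c))^i$ for $i$ large. That length-accounting device is the missing idea; without it (or a genuine substitute) your proof does not go through.
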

	
	\begin{proof}
	We note that if a convex set~$\Omega_{0}$~of~$[0,1]^{2}$~intersects with  the segments~$x_{1}=x_{2}$,  these two conditions obviously hold true.
	At the same time, $x_{2}=x_{1}\pm \frac{1}{2}$ are the pre-images of $x_{1}=x_{2}$,
	therefore, we can assume that~$T(\Omega_{0})$~and its iterations do not intersect with either~$x_{1}=x_{2}$~or~$x_{2}=x_{1}\pm \frac{1}{2}$, as shown in Fig. \ref{frontimage1}.
		\begin{figure}
		\centering
		\includegraphics[width=0.4\textwidth]{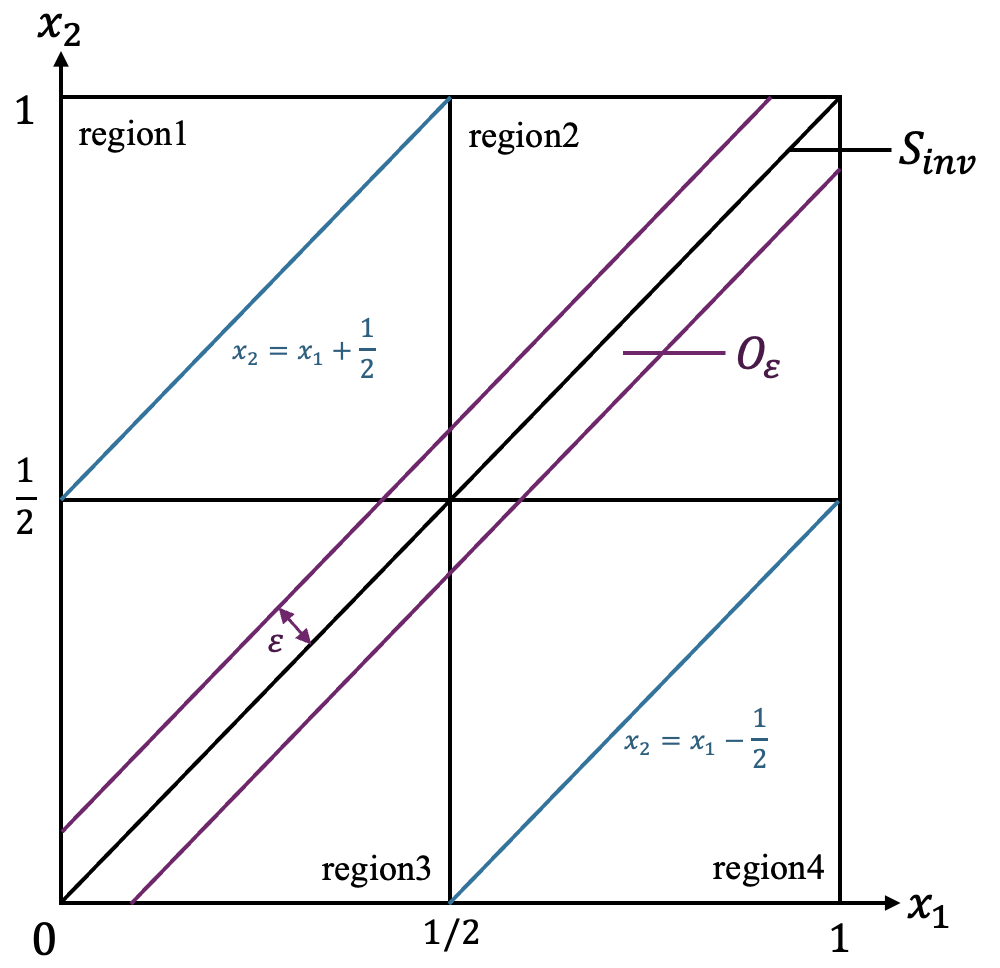}
		\caption{The figure shows the pre-image of $x_{1}=x_{2}$:  $x_{2}=x_{1}\pm \frac{1}{2}$.
We can assume that~$T(\Omega_{0})$~and its iterations do not intersect with either~$x_{1}=x_{2}$~or~$x_{2}=x_{1}\pm \frac{1}{2}$.	}
		\label{frontimage1}
	\end{figure}
	
	For any convex set~$\Omega_{0}$,
	we define a rectangle~$S_{\Omega_{0}}$ such that it contains $\Omega_{0}$ and its  long side is parallel to the longest line segment between boundary points of $\Omega_{0}$ (denoted as~$l_{long}$), with both having the same length, and its short sides intersect with the two endpoints of~$l_{long}$, as shown in Fig. \ref{rectangle}.
	We claim that $0.5M(S_{\Omega_{0}}) \leq M(\Omega_{0}) \leq M(S_{\Omega_{0}})$.
	In fact, we set the two intersection points of the long side of $S_{\Omega_{0}}$ and the convex set~$\Omega_{0}$ to be $A_{1}$ and $A_{2}$ respectively, and set the perpendicular segment from point $A_{1}$ to segment $l_{long}$ as $h_{1}$,  the perpendicular segment from point $A_{2}$ to segment $l_{long}$ as $h_{2}$, as shown in Fig. \ref{rectangle}.
	In addition, we set the triangles with $l_{long}$ as the base, $h_{1}$ and $h_{2}$ as the height as $\triangle \overline{A}_{1}$ and $\triangle \overline{A}_{2}$ respectively.
	Due to the convexity of $\Omega_{0}$, the measure of $\Omega_{0}$ is smaller than the sum of the measures of $\triangle \overline{A}_{1}$ and $\triangle \overline{A}_{2}$.
	Note that the sum of the measures of $\triangle \overline{A}_{1}$ and $\triangle \overline{A}_{2 }$ is $0.5M(S_{\Omega_{0}})$, then the claim holds true.
	%there exists a minimum rectangle~$S_{\Omega_{0}}$~that contains~$\Omega_{0}$, and the rectangle~$S_{\Omega_{0}}$~satisfies~$\frac{1}{2}M(S_{\Omega_{0}}) \leq M(\Omega_{0}) \leq M(S_{\Omega_{0}})$.
	%We define the long side of the rectangle~$S_{\Omega_{0}}$~to be parallel to the longest line segment between boundary points of the convex set (denoted as~$l_{long}$), with both having the same length. Meanwhile, we define the shorter sides of the rectangle to intersect the two endpoints of~$l_{long}$, as shown in Fig. \ref{rectangle}.
	\begin{figure}
		\centering
		\includegraphics[width=0.4\textwidth]{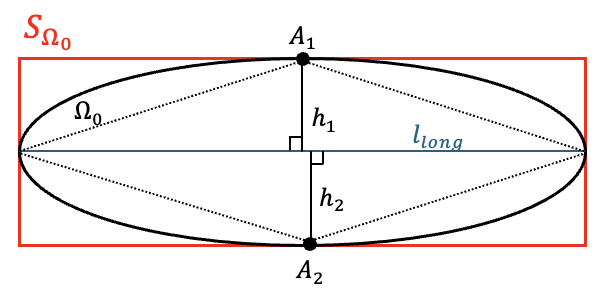}
		\caption{The figure shows a convex set~$\Omega_{0}$~and the corresponding rectangle~$S_{\Omega_{0}}$. The long side of the rectangle~$S_{\Omega_{0}}$~to be parallel to the longest line segment between boundary points of the convex set (denoted as~$l_{long}$), with both having the same length. }
		\label{rectangle}
	\end{figure}
	
	In order to calculate the number of components of the convex set $\Omega_{0}$ after iterations, we only need to calculate the number of components of the corresponding rectangle $S_{\Omega_{0}}$ after iterarions.
    We know that~$M(\Omega_{0}) < 0.5\cdot 10^{-4} \epsilon^{2}$, hence~$M(S_{\Omega_{0}}) \leq 2M(\Omega_{0}) <10^{-4}  \epsilon^{2}$.
    We set the length of~$S_{\Omega_{0}}$~to~$d_{l}$~and the width to~$d_{w}$.
	
	\textbf{Case i}:  If $d_{w} <d_{l}<\epsilon$, the length of $T(S_{\Omega_{0}})$ is less than $2\epsilon$,
	this indicate that $T(S_{\Omega_{0}})$ has at most 2 components, otherwise $T(S_{\Omega_{0}})\cap O_{\epsilon}\neq \emptyset$, which means that the conclusions are valid, as shown in Fig. \ref{case1}.
	Therefore, in this case, $T(\Omega_{0})$ has at most 2 components and $M(T(\Omega_{0}))>4(1-2c)M(\Omega_{0})>2 M(\Omega_{0})$,  we can apply Iteration Lemma \ref{Lem:diedai} and Coro. \ref{Coro:diedai} with $m_{0}=1$, $a=2$, thus we complete the proof of this lemma.
	
		\begin{figure}
		\centering
		\includegraphics[width=0.4\textwidth]{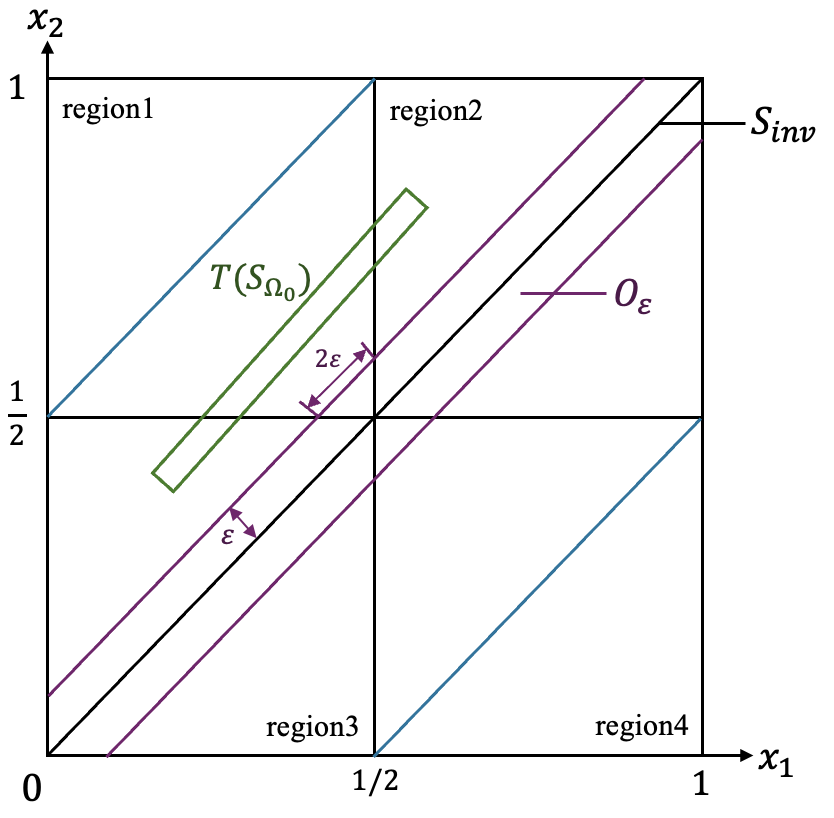}
		\caption{The figure shows that when $d_{w}<d_{l}<\epsilon$, if $T(S_{\Omega_{0}})$ has 3 components, then its length must be larger than $2\epsilon$.
		Therefore $T(S_{\Omega_{0}})$  has at most 2 components.	}
		\label{case1}
	\end{figure}
	
\textbf{Case ii}: If $d_{l} > \epsilon$, then it follows that $d_{w} < 10^{-4} \epsilon$.
For any line in $[0,1]^{2}$ with the slope $k$, regardless of the region where the line is located, after one iteration, the slope changes to:
$$
k \stackrel{T}{\longrightarrow} \frac{c + (1-c)k}{(1-c) + ck}.
$$
Additionally, when $k = 0$ or $k = \infty$, the slope after iterations will not approach $0$ or $\infty$.
Thus, for any line in $[0,1]^{2}$, the slopes of all components of the line after each iteration  are the same.
Furthermore, if the components of the line after some iteration are parallel to $x_{1} = 1/2$ or $x_{2} = 1/2$, they will not approach  parallel to $x_{1} = 1/2$ or $x_{2} = 1/2$ again after iterations.

Let the long edge of $S_{\Omega_{0}}$ be $D_{L}$, then we have $|D_{L}|=d_{l}>\epsilon$.
For every component in $T^{i}(S_{\Omega_{0}})$, we set the edge parallel to $T^{i}(D_{L})$ as  'good edge'.
Then we set
$$
d^{i}_{l}=\sum |\text{'good~ edge' ~in ~every ~component} \in T^{i}(S_{\Omega_{0}}) |.
$$
 Clearly,  under normal circumstances, we have
$$
2(1-2c) d^{i-1}_{l} \leq d^{i}_{l} \leq  2 d^{i-1}_{l}.
$$
However, if $T^{i}(S_{\Omega_{0}})\cap \{x_{1}=\frac{1}{2}\}$ (or $\{x_{2} = \frac{1}{2}\}$) $\neq \emptyset$ and
the 'good edges' of the components of $T^{i}(S_{\Omega_{0}})$  are parallel to  $x_{1} = \frac{1}{2}$ or $x_{2} = \frac{1}{2}$, then we have $d^{i}_{l} < 4 d^{i-1}_{l}$.

Now we assume that after $j$ iterations of $S_{\Omega_{0}}$, the 'good edges' of all components are parallel to either $x_{1} = \frac{1}{2}$ or $x_{2} = \frac{1}{2}$, then $d^{j}_{l} < 4 d^{j-1}_{l}$.
Therefore, for $i > j$, we have
\begin{eqnarray*}
	d^{i}_{l} &\leq &  2^{i-j}  d^{j}_{l} \\
	&\leq &  2^{i-j}  \cdot  4 d^{j-1}_{l} \\
	&\leq&  2^{i-j}  \cdot  4 \cdot 2^{j-1} d_{l} \\
	&=&  2^{i+1} d_{l}.
\end{eqnarray*}

For $T^{i}(S_{\Omega_{0}})$, we define
\begin{eqnarray*}
M_{i}=& \#&\{ \text{ 'long~component' }\in   T^{i}(S_{\Omega_{0}})\mid \\
 &| &\text{ the 'good edge' of the component} ~~|\geq \epsilon \},
\end{eqnarray*}
\begin{eqnarray*}
	m_{i}=& \#&\{ \text{ 'short~component' }\in   T^{i}(S_{\Omega_{0}})\mid \\
	&| & \text{the 'good edge' of the component}~~| < \epsilon \}.
\end{eqnarray*}
Thus, we have $M_{i} \cdot \epsilon \leq d^{i}_{l} \leq 2^{i+1} d_{l}$, which implies that $M_{i} \leq \frac{d_{l}}{\epsilon} \cdot 2^{i+1}$.
In addition, any 'long component' can produce up to two 'short components' after one iteration, and any 'short component' can also produce up to two 'short components' after one iteration.
Therefore, we have $m_{i} \leq 2 m_{i-1} + 2 M_{i-1}$.

Since $d_{l} > \epsilon$, we have $M_{0} = 1$ and $m_{0} = 0$.
For any $i$, this leads to
\begin{eqnarray*}
	m_{i} &\leq &  2 m_{i-1} + 2 M_{i-1} \\
	&\leq &  2^{2} m_{i-2} + 2^{2} M_{i-2} + 2 M_{i-1} \\
	&\leq&  \sum^{i-1}_{k=0} 2^{i-k} M_{k} \\
	&\leq & \frac{d_{l}}{\epsilon}  \sum^{i-1}_{k=0} 2^{i+1} = \frac{d_{l}}{\epsilon} \cdot i \cdot 2^{i+1}.
\end{eqnarray*}
Therefore, the total number of components of $T^{i}(\Omega_{0})$  is less than
$$
M_{i} + m_{i} \leq \frac{d_{l}}{\epsilon} \cdot (i+1) \cdot 2^{i+1} = \frac{2d_{l}}{\epsilon} \cdot (i+1) \cdot 2^{i}.
$$
For the fixed $c\in (0,\frac{1}{4})$, we have $E_{-}(c) = 4(1-2c)>2$.
Let $N_{0} (c)= \min \{ i \in \mathbb{N} \mid (4(1-2c))^{i} > \frac{2d_{l}}{\epsilon} \cdot (i+1) \cdot 2^{i} \}$, then we can apply Iteration Lemma \ref{Lem:diedai} and Corollary \ref{Coro:diedai} with $m_{0} = N_{0}(c)$, $a = \frac{2d_{l}}{\epsilon} \cdot (N_{0}(c)+1) \cdot 2^{N_{0}(c)}$, and $E_{-}(c) = 4(1-2c)$. Thus, we complete the proof of this lemma.

	 \end{proof}
%----------------------------------------------------------------------------------	
%----------------------------------------------------------------------------------	
%----------------------------------------------------------------------------------	 	
	\begin{lemma}\label{lemma11}
		For any $0\leq c < \frac{1}{4}$, $O_{\epsilon}$ (the $\epsilon$-neighborhood of $S_{inv}$ defined above),~$\theta_{1}>0$ defined above, and any convex set~$\Omega_{0}$~in one of the four regions with measure~$M(\Omega_{0})\geq \theta_{1}$, there exists a constant~$\alpha_{2}>0$~depending on~$\epsilon$~and disjoint convexities~$\Omega_{i}\subset \Omega_{0} ,i=1,2,\cdots$~satisfying the following two conclusions:
		\begin{enumerate}
			\item for any~$i\geq 1$, there exists a~$t(i)$~such that~$T^{t(i)}(\Omega_{i})\subset O_{\epsilon}$;
			\item ~$M(\underset{i\geq 0}{\cup}\Omega_{i})\geq \alpha_{2}M(\Omega_{0})$.
		\end{enumerate}
	\end{lemma}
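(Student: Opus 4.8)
The plan is to run a finite \emph{harvesting} of the forward orbit of $\Omega_0$: at each step one inspects the convex components of $T^{j}(\Omega_0)$, and whenever a component already spreads across all four small regions one skims off the part of it lying in $O_\epsilon$ and stops iterating that component, while all other components are cut along $x_1=\frac12$, $x_2=\frac12$ and carried one more step. Two preliminary remarks make this legitimate. First, $O_\epsilon=\{(x_1,x_2)\in[0,1]^2:|x_1-x_2|\le\sqrt2\,\epsilon\}$ is convex, being a strip intersected with the square, so $\Lambda\cap O_\epsilon$ is convex for every convex $\Lambda$. Second, every component of $T^{j}(\Omega_0)$ with $j\ge 1$ has the form $T(P)$ with $P$ a convex subset of a single small region: it is obtained from a component at level $j-1$ by cutting along the two partition lines and applying $T$, which is affine and injective on each region with $|\det JT|\equiv 4(1-2c)=:\lambda$. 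Consequently Lemma \ref{lemma9} applies verbatim (with ``$\Omega$'' $=P$) to any component that meets all four regions.

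I would then proceed as follows. Call a component \emph{active} if no ancestor of it at an earlier level has met all four regions, and start from the single component $\Omega_0$ at level $0$. At level $j$, for each active component $\Lambda=T^{j}(\Lambda_0)$ --- where $\Lambda_0\subset\Omega_0$ is its preimage sheet, on which $T^{j}$ is affine with Jacobian determinant $\lambda^{j}$ --- do the following. If $\Lambda$ meets all four regions, then Lemma \ref{lemma9} yields $M(\Lambda\cap O_\epsilon)\ge\frac12\epsilon^2 M(\Lambda)$; put $\Omega_i:=(T^{j}|_{\Lambda_0})^{-1}(\Lambda\cap O_\epsilon)$, which is convex, satisfies $T^{j}(\Omega_i)\subset O_\epsilon$, and obeys $M(\Omega_i)=\lambda^{-j}M(\Lambda\cap O_\epsilon)\ge\frac12\epsilon^2 M(\Lambda_0)$, and deactivate $\Lambda$. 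Otherwise cut $\Lambda$ along the partition lines into its at most three convex region-pieces and pass them to level $j+1$. Distinct active components have disjoint preimage sheets, so the $\Omega_i$ are pairwise disjoint convex subsets of $\Omega_0$. Writing $B_j\subset\Omega_0$ for the union of the preimage sheets of the active components at level $j$ (a decreasing sequence of sets) and $B_\infty=\bigcap_j B_j$, the set $\Omega_0\setminus B_\infty$ is --- up to a null set --- the disjoint union of the preimage sheets of all the deactivated components, whence
\[
\sum_i M(\Omega_i)\ \ge\ \frac12\epsilon^2\,\bigl(M(\Omega_0)-M(B_\infty)\bigr).
\]

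So everything comes down to $M(B_\infty)=0$. On each active preimage sheet $T^{j}$ multiplies area by $\lambda^{j}$, hence $M(B_j)=\lambda^{-j}\sum_{\Lambda\ \mathrm{active}}M(\Lambda)$; since each component is a convex subset of $[0,1]^2$ we have $M(\Lambda)\le 1$, and therefore $M(B_j)\le\lambda^{-j}N_j$, where $N_j$ is the number of active components at level $j$. By the ``good edge'' component-counting estimate from the proof of Lemma \ref{lemma10} --- which bounds the number of components of $T^{j}$ of a convex set lying in a single region by a multiple of $(j+1)2^{j}$, the multiple being controlled by the length of its longest chord and hence here by $\frac{2}{\epsilon}$ --- one has $N_j\le\frac{2}{\epsilon}(j+1)2^{j}$. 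Since $c<\frac14$ forces $\lambda=4(1-2c)>2$,
\[
M(B_j)\ \le\ \frac{2}{\epsilon}(j+1)\left(\frac{1}{2(1-2c)}\right)^{j}\ \longrightarrow\ 0\qquad(j\to\infty),
\]
so $M(B_\infty)=0$ and $\sum_i M(\Omega_i)\ge\frac12\epsilon^2 M(\Omega_0)$; the lemma follows with $\alpha_2=\frac12\epsilon^2$. (If one prefers a uniformly bounded number of iterations, one stops the harvest at the first level $J_0$ with $M(B_{J_0})\le\theta_1/2$; since $\theta_1\le M(\Omega_0)$ this already processes at least half of $\Omega_0$, giving $\alpha_2=\frac14\epsilon^2$ and all the times $t(i)$ bounded by $J_0=J_0(c,\epsilon,\theta_1)$.)

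The one genuinely delicate step is the estimate $M(B_j)\to0$. The region-pieces of a single component have truly overlapping images --- all four regions are mapped onto one and the same parallelogram $(I+cA)[0,1]^2$ --- so one cannot replace $\sum_\Lambda M(\Lambda)$ by the measure (at most $1$) of the union of the components; it is rather the sub-exponential growth $N_j=O(j\,2^{j})$ of the number of components, overwhelmed by the area expansion $\lambda^{j}$ precisely because $2<4(1-2c)$ in the regime $c<\frac14$, that forces the residual set $B_\infty$ to be null. Everything else --- convexity of $O_\epsilon$, affineness of $T$ on each sheet, and the applicability of Lemma \ref{lemma9} --- is routine bookkeeping.
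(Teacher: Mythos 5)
Your architecture --- harvest every component that comes to meet all four regions via Lemma~\ref{lemma9}, pull the skimmed part back along the affine preimage sheet, and reduce the lemma to $M(B_\infty)=0$ --- is internally coherent, and the bookkeeping up to $M(B_j)\le\lambda^{-j}N_j$ is correct. The gap is the bound $N_j\le\frac{2}{\epsilon}(j+1)2^{j}$. The ``good edge'' count in the proof of Lemma~\ref{lemma10} is established only for a convex set circumscribed by a rectangle with $M(S_{\Omega_0})<10^{-4}\epsilon^{2}$, which forces either diameter $<\epsilon$ (Case i) or width $d_w<10^{-4}\epsilon$ (Case ii). It is precisely this thinness that makes every component an essentially one--dimensional parallelogram, so that each cut meets it in a single transversal chord, the inequalities $d_l^{i}\le 2^{i+1}d_l$ and $m_i\le 2m_{i-1}+2M_{i-1}$ hold, and the number of long components is controlled by total good-edge length divided by $\epsilon$. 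The set $\Omega_0$ of Lemma~\ref{lemma11} satisfies only $M(\Omega_0)\ge\theta_1$ and carries no thinness hypothesis; its components remain genuinely two--dimensional, and a fat convex component can be bisected by $x_1=\tfrac12$ or $x_2=\tfrac12$ into two pieces of comparable diameter, so neither the good-edge recursion nor the ``at most two new short components'' step survives. The only bound available for free is $N_j\le 3N_{j-1}$ (an active component meets at most three regions, else it would already have been harvested), i.e.\ $N_j\le 3^{j}$, and $3^{j}\lambda^{-j}=\bigl(3/(4(1-2c))\bigr)^{j}\to 0$ only when $c<\tfrac18$. As written, your argument therefore proves the lemma on $[0,\tfrac18)$ but not on $[\tfrac18,\tfrac14)$.

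The paper avoids this difficulty by never counting all components: it tracks a single distinguished component per step. If each image splits into at most two pieces, the larger piece has measure at least $\tfrac12\cdot 4(1-2c)=2(1-2c)>1$ times the previous one, so after boundedly many steps it must meet all four regions; if a splitting into three pieces occurs, it switches to the piece meeting the invariant line $x_1+x_2=1$ and grows the length of its trace on that line by the factor $2(1-2c)$ until the piece contains $(\tfrac12,\tfrac12)$. Lemma~\ref{lemma9} is then applied once to that single large component, which already yields a definite proportion $\alpha_2$ of $M(\Omega_0)$, with no global control of $N_j$ required. To salvage your route on all of $[0,\tfrac14)$ you would need a new counting argument valid for fat convex sets, or else graft the paper's single-component tracking onto your harvesting scheme.
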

	
\begin{proof}
As discussed in Lemma \ref{lemma10}, suppose any convex set and its iterative convex set do not intersect with ~$x_{1}=x_{2}$~and~$x_{2}=x_{1}\pm \frac{1}{2}$, as shown in Fig. \ref{frontimage1}.

If $\Omega_{0}$ is divided into at most two components after each iteration, there exists a component $\Omega_{1} \in T(\Omega_{0})$ such that
$$
M(\Omega_{1}) > \frac{1}{2} \cdot 4(1-2c) \cdot M(\Omega_{0}) = 2(1-2c) M(\Omega_{0}).
$$
Thus, we can construct a sequence of convex sets defined by $\Omega_{j+1} = T(\Omega_{j}) \cap ~\text{region}~i$ for some $i$ such that
$$
M(\Omega_{j+1}) > 2(1-2c) M(\Omega_{j}) > (2(1-2c))^{j} M(\Omega_{0}).
$$
Since \( M(\Omega_{0}) \geq  \theta_{1} \), it follows that \( (2(1-2c))^{t} M(\Omega_{0}) > 1 \) for some fixed \( t = t(c) \).
Thus, there exists some \( j < t \) such that \( T(\Omega_{j}) \cap \text{region }i \neq \emptyset \) for every $i=1,2,3,4$.
Consequently, by applying Lemma \ref{lemma9}, we can conclude that the results hold true.

If $\Omega_{0}$ is first divided into three components after $a$ iterations, then $T^{a}(\Omega_{0})$ intersects with $x_{1} + x_{2} = 1$.
Since \( M(\Omega_{0}) \geq \theta_{1} \), we have $|T^{a}(\Omega_{0}) \cap \{ x_{1} + x_{2} = 1 \}| \geq  \theta_{a} > 0$.
Let $\Omega_{a}$ be the component in $T^{a}(\Omega_{0})$ that intersects with $x_{1} + x_{2} = 1$, then we have $|\Omega_{a} \cap \{ x_{1} + x_{2} = 1 \}| \geq  \theta_{a} > 0$.
%If there exists an $i$ such that
%$$
%\{ T^{i}(\Omega_{a}) \cap x_{1} + x_{2} = 1 \} \cap \{ x_{1} = x_{2}, x_{2} = x_{1} \pm \frac{1}{2} \} \neq \emptyset,
%$$
%then the conclusion holds. Therefore, we assume that for every $i$, we have
%$$
%\{ T^{i}(\Omega_{a}) \cap x_{1} + x_{2} = 1 \} \cap \{ x_{1} = x_{2}, x_{2} = x_{1} \pm \frac{1}{2} \} = \emptyset.
%$$
Since  $x_{1} + x_{2} = 1$ is an invariant set for $T$, $\Omega_{a}$ continues to intersect with $x_{1} + x_{2} = 1$ after each iteration.
Therefore, there exists a component $\Omega^{1}_{a} \subset  T(\Omega_{a})$ such that
\begin{eqnarray*}
|\Omega^{1}_{a} \cap \{ x_{1} + x_{2} =1 \}| &\geq & 2(1-2c)\cdot  |\Omega_{a} \cap \{ x_{1} + x_{2} = 1 \}| \\
&\geq & 2(1-2c) \theta_{a}.
\end{eqnarray*}
Thus,  we can construct a sequence of convex sets defined by $\Omega^{j+1}_{a}\subset  T(\Omega^{j}_{a})$  such that
\begin{eqnarray*}
|\Omega^{j+1}_{a} \cap \{ x_{1} + x_{2} = 1 \}| &\geq & 2(1-2c)\cdot |\Omega^{j}_{a} \cap \{ x_{1} + x_{2} = 1 \}| \\
&\geq & (2(1-2c))^{j} \theta_{a}.
\end{eqnarray*}
It follows that \( (2(1-2c))^{t} \theta_{a} > \frac{\sqrt{2}}{4} \) for some fixed \( t = t(c) \).
Thus, there exists some \( j < t \) such that \( (\frac{1}{2}, \frac{1}{2}) \in \Omega^{j}_{a} \).
Consequently, by applying Lemma \ref{lemma9} we can conclude that the results hold true.
\end{proof}

	%----------------------------------------------------------------------------------	
	%----------------------------------------------------------------------------------	
	%----------------------------------------------------------------------------------	 	

\begin{lemma}\label{lemma12}
		For any $0\leq c < \frac{1}{4}$, there exists a constant \( \theta_{2} \in (0,1) \) such that for almost every segment \( l \) in one of the four  regions with slope \(  - 1 \) and measure \( M(l) < \theta_{2} \), there exist constants \( \alpha_{3}, \theta_{3} > 0 \) and \( r_{0} \) depending on \( \theta_{2} \), as well as disjoint subsegments \( l_{i} \), \( i=1,2,\ldots \), such that the following conclusions hold:
	\begin{enumerate}
		\item For any \( i \geq 1 \), there exists a \( t(i) \geq 0 \) such that \( T^{t(i)}(l_{i}) \) is a component of \( T^{t(i)}(l) \) with \( M(T^{t(i)}(l_{i})) \geq \theta_{3} \) or \( T^{j}(l_{i}) \subset D_{r_{0}} \) for some \( j \leq t(i) \);
		\item \( M\left(\bigcup_{i} l_{i}\right) \geq \alpha_{3} M(l) \).
	\end{enumerate}
\end{lemma}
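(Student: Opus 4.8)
\smallskip
\noindent\emph{Plan of proof of Lemma~\ref{lemma12}.}
As with Lemma~\ref{lemma10}, the strategy is to reduce the statement to the Iterative Lemma~\ref{Lem:diedai} and Corollary~\ref{Coro:diedai}; the substantive point is a bound on the number of components of $T^{k}(l)$ for a short slope-$-1$ segment $l$. I would begin by recording the features special to this direction. On each of the four regions $JT=2(I+cA)$, and $(1,-1)$ is an eigenvector of $I+cA$ with eigenvalue $1-2c$; hence $T$ carries a slope-$-1$ segment lying in one region to a slope-$-1$ segment, stretched by exactly $2(1-2c)>1$ for $c<\frac14$, so along this direction one may take $E_{+}(c)=E_{-}(c)=2(1-2c)$ in Lemma~\ref{Lem:diedai}. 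On the leaf $\{x_{1}+x_{2}=b\}$, writing $\delta=x_{1}-x_{2}$, we have $\mathrm{dist}(\cdot,S_{inv})=|\delta|/\sqrt{2}$, the partition lines $x_{1}=\frac12$ and $x_{2}=\frac12$ sit symmetrically at $\delta=\pm|1-b|$, the map acts by $\delta\mapsto 2(1-2c)\delta$ with $|\delta|<1-2c$ after one step, and $\{x_{1}+x_{2}=1\}$ is invariant with $T$ acting on $b$ nearby with expansion rate $2$. I then fix $r_{0}$ with $r_{0}\sqrt{2}<1-2c$ and $r_{0}\ll\theta_{2}$.

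\smallskip
The heart of the argument is the component count. Iterating $l$, I stop following a component once its length reaches a fixed $\theta_{3}$ (this gives the first alternative of conclusion~(1)), and at each step at which a component crosses a partition line I inspect $|1-b|$ there. If $|1-b|\ge r_{0}\sqrt{2}$, then every sub-interval of the connected image on which $|\delta|\ge|1-b|$ lies entirely in $D_{r_{0}}$; I peel those off into the second alternative of conclusion~(1), so that the crossing leaves only the single central sub-interval $\{|\delta|\le|1-b|\}$ behind and does not raise the surviving-component count. The only crossings that can raise the count are those occurring while the component lies within distance $O(r_{0})$ of the invariant anti-diagonal; there the coordinate $1-b$ roughly doubles at each step, and a region-$1$ piece produced by such a crossing is thrown toward a corner of the square, where the leaf no longer meets the partition lines. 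Tracking these ``near-diagonal'' episodes shows that, for almost every segment $l$---the exceptions being the anti-diagonal itself and a measure-zero (Borel--Cantelli) family of segments whose $b$-orbit approaches the singular leaves $\{x_{1}+x_{2}\in\{0,1,2\}\}$ too fast---the surviving component count of $T^{k}(l)$ grows strictly slower than $(2(1-2c))^{k}$; this supplies $m_{0},a\in\mathbb{N}$ with $a<E_{-}^{m_{0}}(c)$ for which $l$ is $(\delta_{1},m_{0},a)$-good.

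\smallskip
Finally, Corollary~\ref{Coro:diedai} applied to $l$ yields disjoint subsegments whose images under suitable iterates are components of measure $\ge 2^{-\mu^{N_{0}}}M(D)=:\theta_{3}$, of total measure $\ge c_{1}M(l)$; together with the subsegments peeled off into $D_{r_{0}}$ this produces the family $\{l_{i}\}$ and the bound $M(\bigcup_{i}l_{i})\ge\alpha_{3}M(l)$ with a suitable $\alpha_{3}\in(0,1)$, which is the lemma. I expect the main obstacle to be precisely the component count near the invariant anti-diagonal and the two corners: because the induced $b$-dynamics is chaotic, almost every $l$ returns arbitrarily close to $\{x_{1}+x_{2}=1\}$ infinitely often, so one must show that each such return is short-lived and that between returns the surviving components are forced either into $D_{r_{0}}$ or, through the exact $2(1-2c)$-stretching of the $\delta$-coordinate, up to length $\theta_{3}$---and, in the process, to pin down exactly which measure-zero set of segments has to be discarded. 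Once this is established the remaining bookkeeping duplicates that of Lemma~\ref{lemma10}.
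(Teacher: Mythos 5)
Your overall strategy is the paper's: reduce to the Iterative Lemma~\ref{Lem:diedai} and Corollary~\ref{Coro:diedai} with $E_{\pm}(c)=2(1-2c)$ along the slope-$(-1)$ direction, count only components surviving inside $O_{r_{0}}$ while peeling off into the second alternative of conclusion~(1) any piece that lands in $D_{r_{0}}$, and exploit the fact that a splitting which does \emph{not} immediately eject material into $D_{r_{0}}$ can only occur when the image is within $O(r_{0})$ of $(\tfrac12,\tfrac12)$, after which the subsequent images are thrown to the corners $(0,0)$ or $(1,1)$. All of that matches the paper.

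The gap is that you leave the decisive quantitative step unproved --- you explicitly defer ``the component count near the invariant anti-diagonal and the two corners'' as the main obstacle --- and the workaround you sketch does not fit the hypothesis you need. The $(\delta_{1},m_{0},a)$-good condition demands a \emph{uniform} bound: for every small component arising at every stage, its next $m_{0}$ iterates produce at most $a$ components (in $O_{r_{0}}$). An asymptotic statement that the surviving count ``grows strictly slower than $(2(1-2c))^{k}$'' for the initial segment, with a Borel--Cantelli family of exceptional segments discarded, does not plug into Lemma~\ref{Lem:diedai}, and the exceptional-set bookkeeping is in fact unnecessary. The paper closes the gap with a single uniform per-block claim: set $M=\lfloor\log_{2(1-2c)}3\rfloor+1$ and choose $\theta_{2},r_{0}$ with $(2(1-2c))^{i}\theta_{2}<2r_{0}<2^{-i}$ for $1\le i\le M$; then for \emph{any} segment $l\subset O_{r_{0}}$ of length $<\theta_{2}$, the iterate $T^{M}(l)$ has at most three components in $O_{r_{0}}$. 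The point is that if $T(l)$ splits into three pieces inside $O_{r_{0}}$ it must meet $\overline{O}_{r_{0}}$ and hence lie in the $r_{0}$-ball around $(\tfrac12,\tfrac12)$; since $T(\tfrac12,\tfrac12)=(0,0)$, the next $M-1$ images stay in $2^{i-1}r_{0}$-neighborhoods of the corners, which are disjoint from $\overline{O}_{r_{0}}$ because $2^{i-1}r_{0}<\tfrac14$, so no further splitting inside $O_{r_{0}}$ occurs during the block. Since $3<(2(1-2c))^{M}$, the Iterative Lemma applies with $m_{0}=M$, $a=3$. Your plan needs this (or an equivalent uniform block estimate) to be a proof; without it the reduction to Corollary~\ref{Coro:diedai} is not justified, and the recurrence of the $b$-orbit to $\{x_{1}+x_{2}=1\}$ that worries you is harmless precisely because the bound resets at each $M$-block.
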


\begin{proof}
	Let \( O_{r} = \{\bm{x} \in [0,1]^{2} \mid \text{dist}(\bm{x}, S_{inv}) \leq r\} \) for \( r > 0 \), and denote \( \overline{O}_{r} = \{\bm{x} \in O_{r} \mid \text{dist}(\bm{x}, \{x_{1} + x_{2} = 1\}) \leq r\} \).
	Consider the segment \( l   \subset O_{r_{0}} \)  with length less than \( \theta_{2} \).
	We fix \( c \) such that \( 2|1-2c| > 1 \), let \( M = \lfloor \log_{2(1-2c)}3\rfloor + 1 \), and choose \( \theta_{2} \) sufficiently small and \( r_{0} \) satisfying
	 \( (2(1-2c))^{i} \theta_{2} <2r_{0} < 2^{-i} \) for \( i =1, 2, 3, \ldots, M \).
	 For a segment \( l \subset O_{r_{0}} \), we claim that \( T^{M}(l) \) has at most three components in $O_{r_{0}} $.
	
	Assuming \( T(l) \) has three components leads to \( T(l) \cap \overline{O}_{r_{0}} \neq \emptyset \). Since \( \left(\frac{1}{2}, \frac{1}{2}\right) \in \overline{O}_{r_{0}} \) and the diameter of \( \overline{O}_{r_{0}} \) is less than \( 2r_{0} \), we have  that \( T(l) \) is in the \(r_{0} \)-neighborhood of \( \left(\frac{1}{2}, \frac{1}{2}\right) \).
	Thus, we find that \( T^{i}(l_{i}) \cap O_{r_{0}} \) is in the \((2^{i-1}r_{0} )\)-neighborhood of $(0,0)$ or $(1,1)$ for \( 2 \leq i \leq M \).
	Noting that $2^{i-1}r_{0}<\frac{1}{4}$, we conclude that the \((2^{i-1}r_{0} )\)-neighborhood of \( (0,0) \) (or $(1,1)$) and \( \overline{O}_{r_{0}} \) do not intersect, thus proving our claim.
	
	Since \( (2(1-3c))^{M} > 3 \), we apply Iteration Lemma \ref{Lem:diedai} and Coro. \ref{Coro:diedai} with \( m_{0} = M \), \( a =3\), and \( E_{-}(c) = 2(1-2c) \) to complete the proof of this lemma.
\end{proof}

	 \begin{lemma}\label{lemma13}
	 	For any~$0 \leq c < \frac{1}{4}$,~$\theta_3 >0$ stated above, and~$r_0 = \frac{\theta_3}{3}$, consider a segment~$l_0$ in one of the four regions with slope~$-1$ and $M(l_{0})\geq \theta_3 $.
	 	Then there exists a   subsegments~$l^{'}_0$ of ~$l_0$  such that  $l^{'}_0\subset D_{r_0}$ and $M(l^{'}_0) \geq \frac{1}{3} M(l_0)$.
	 %	\begin{enumerate}
	 	%	\item $T^n(l_i) \subset D_{r_0}$ for each~$i$,
	 	%	\item $\sum_i M(l_i) \geq \frac{1}{3} M(l_0)$.
	 %\end{enumerate}
	 \end{lemma}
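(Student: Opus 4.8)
The plan is to exploit that a line of slope $-1$ is orthogonal to the diagonal $S_{inv}$, so that arc length along $l_0$ literally measures the distance to $S_{inv}$. First I would note that $l_0$ sits on a line $\{x_1+x_2=s\}$ for some constant $s$, and that for every $\bm{p}=(a,b)\in[0,1]^2$ one has $\mathrm{dist}(\bm{p},S_{inv})=|a-b|/\sqrt{2}$: the closest point of the diagonal \emph{line} to $\bm{p}$ is $\big(\tfrac{a+b}{2},\tfrac{a+b}{2}\big)$, which already lies in $[0,1]^2$ and hence in the diagonal \emph{segment} $S_{inv}$, so the two distances coincide. (That $l_0$ lies in one of the four regions plays no role here beyond ensuring $l_0\subset[0,1]^2$.)

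Next I would parametrize $l_0$ by signed arc length $\tau$ taken from the point $(\tfrac s2,\tfrac s2)$ where its supporting line meets the diagonal, writing $\bm{p}(\tau)=\big(\tfrac s2+\tfrac{\tau}{\sqrt2},\,\tfrac s2-\tfrac{\tau}{\sqrt2}\big)$ for $\tau$ ranging over an interval of length $M(l_0)$. Along $l_0$ we then have $x_1-x_2=\sqrt2\,\tau$, so the formula above gives $\mathrm{dist}(\bm{p}(\tau),S_{inv})=|\tau|$. Hence $l_0\cap O_{r_0}$ is exactly the part of $l_0$ with $|\tau|\le r_0$, a subsegment of length at most $2r_0$, and $l_0\cap D_{r_0}=l_0\setminus O_{r_0}$ is a union of at most two disjoint subsegments of $l_0$ of total length at least $M(l_0)-2r_0$.

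Finally I would substitute $r_0=\theta_3/3$ and use $M(l_0)\ge\theta_3$: then $2r_0=\tfrac23\theta_3\le\tfrac23 M(l_0)$, so $M(l_0\cap D_{r_0})\ge M(l_0)-\tfrac23 M(l_0)=\tfrac13 M(l_0)$, and taking $l_0'$ to be the (at most two) subsegments comprising $l_0\cap D_{r_0}$ finishes the proof. The argument is entirely elementary; the only step that requires a moment's care is the identification of the distance to the diagonal \emph{segment} with the distance to the diagonal \emph{line} — which is precisely what makes the arc-length computation clean — so there is no genuine obstacle to expect.
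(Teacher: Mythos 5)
Your proposal is correct and follows essentially the same route as the paper: both arguments rest on the observation that a slope $-1$ segment meets $O_{r_0}$ in a piece of length at most $2r_0$, and then use $M(l_0)\ge\theta_3=3r_0$ to get $M(l_0\setminus O_{r_0})\ge M(l_0)-2r_0\ge\frac13 M(l_0)$. Your explicit arc-length parametrization merely makes rigorous the bound the paper reads off from its figure, and like the paper you end up taking $l_0'$ to be the (possibly two-piece) set $l_0\setminus O_{r_0}$, which is how the lemma is actually used downstream.
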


	 \begin{proof}
	 If $l_{0}\cap O_{r_{0}} = \emptyset$, the conclusion naturally follows.
	
	 If $l_{0}\cap O_{r_{0}} \neq \emptyset$,
	 since the segment~$l_0$ has slope~$-1$ and~$M(l_0)\geq  \theta_3 =3r_0$, as shown in Fig. \ref{Fig.disorder-segment}, the maximum length of the intersection of $l_{0}$ and $O_{r_{0}}$ is $2r_{0}$, then in this case the length of $l_{0}$ outside $O_{r_{0}}$ is larger than
	 $$
	 \frac{M(l_{0})-2r_{0}}{M(l_{0})}=1- \frac{2r_{0}}{M(l_{0})}\geq 1-\frac{2r_{0}}{3r_{0}}=\frac{1}{3}.
	 $$
	that is $M(l^{'}_0) \geq \frac{1}{3} M(l_0)$.
	 	\begin{figure}
	 	\centering
	 	\includegraphics[width=0.4\textwidth]{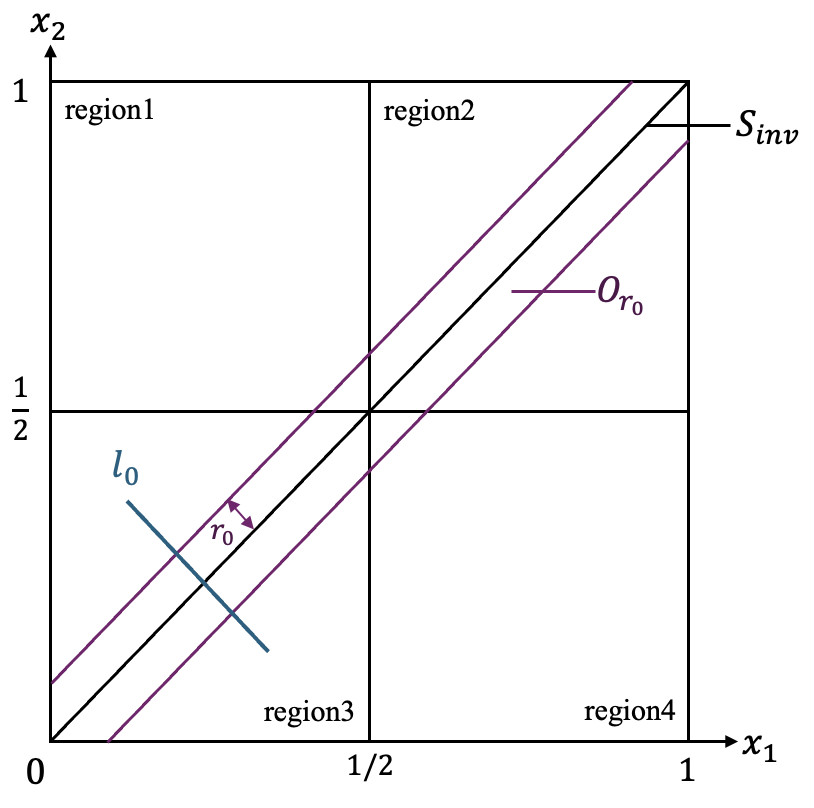}
	 	\caption{ This figure shows that if $l_{0}\cap O_{r_{0}} \neq \emptyset$, then $|l_{0}\cap  O_{r_{0}}| \leq 2r_{0}$.}
	 	\label{Fig.disorder-segment}
	 \end{figure}
	
	 \end{proof}
	 %%%--------------------------------------------------------------------------------------------------------------------------------------------
	 %%%--------------------------------------------------------------------------------------------------------------------------------------------
	 %%%--------------------------------------------------------------------------------------------------------------------------------------------
	 From Lemma \ref{lemma9}, \ref{lemma10} and \ref{lemma11}, we can directly derive Lemma \ref{lemma6}, thereby establishing the ordered part of intermittent-synchronization.
	 Similarly, Lemma \ref{lemma8} follows from Lemma \ref{lemma12} and \ref{lemma13}, leading to the derivation of the disordered part of intermittent-synchronization.
	  %%%--------------------------------------------------------------------------------------------------------------------------------------------
	 \subsection{Proof of the uniqueness of absolutely continuous invariant measure when  \( c \in [0, \frac{1}{4}) \).}
	
	 \begin{lemma}\label{lemma14}
	 	Let the origin point  \( O = (0, 0) \),  the point \( M = \left( \frac{1}{4}, \frac{1}{4} \right) \) and $\epsilon>0$ depending on $T$.
	 	When \( c \in [0, \frac{1}{4}) \), for any open set \( G \), there exists an integer \( n \) such that \( T^n G \) contains a $\epsilon$-neighborhood of \( M \).
	 \end{lemma}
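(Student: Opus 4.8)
\emph{Strategy.} This is a topological‑covering (quasi‑exactness) statement: once every open set is eventually mapped onto a fixed ball $B_\epsilon(M)$, two distinct ergodic ACIMs cannot have essentially disjoint supports, which is exactly what is needed to upgrade Tsujii's finiteness \cite{tsujii2001absolutely} to uniqueness. I would reach $B_\epsilon(M)$ in three stages: (i) blow an arbitrary open $G$ up to a component meeting the branch point $(\tfrac12,\tfrac12)$; (ii) convert that into a ``tube'' around a genuine segment of the diagonal; (iii) spread this tube over the whole diagonal and fatten it transversally until it swallows $B_\epsilon(M)$. For (i): it suffices to take $G=B_\eta(\bm p_0)$ a small ball inside one of the four regions; applying Lemma \ref{lemma10} (growth from a small convex set) and then Lemma \ref{lemma11} — in whose proof one reaches, via Lemma \ref{lemma9}, a situation where the image of a convex set meets all four regions — yields an $n_1$ and a convex two‑dimensional component $\Omega_1$ of $T^{n_1}G$ with $(\tfrac12,\tfrac12)\in\Omega_1$. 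Since $\Omega_1$ contains the convex hull of four points, one in each open region relative to $(\tfrac12,\tfrac12)$, that point lies in the \emph{interior} of $\Omega_1$, so $B_{\rho_1}(\tfrac12,\tfrac12)\subset\Omega_1$ for some $\rho_1>0$ (allowed to depend on $G$, since $n$ may).

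\emph{Stage (ii).} At $(\tfrac12,\tfrac12)$ the map $T$ is discontinuous, and the branch coming from the region $\{x_1,x_2<\tfrac12\}$ carries a one‑sided neighbourhood of $(\tfrac12,\tfrac12)$ onto a neighbourhood of the fixed point $(1,1)$, where $T$ is \emph{affine} with linear part $2(I+cA)$, whose eigenvalues are $2$ (eigenvector $(1,1)$, the diagonal direction) and $2(1-2c)$ (eigenvector $(1,-1)$, transverse) — both strictly larger than $1$ precisely because $c<\tfrac14$. Iterating this affine branch, $B_{\rho_1}(\tfrac12,\tfrac12)$ becomes a tube around a segment of the diagonal issuing from $(1,1)$, expanding by the factor $2$ along the diagonal and by $2(1-2c)$ transversally, and — since $(1,1)$ is at distance $\tfrac12$ from the partition lines $x_1=\tfrac12$, $x_2=\tfrac12$ — no further cut occurs until that diagonal segment reaches below $x=\tfrac12$. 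Taking the last such iterate produces an $n_2\ge n_1$ and a component of $T^{n_2}G$ containing a tube around a diagonal sub‑segment $\{(x,x):x\in I_0\}$ of a fixed length $\ell_0>0$ equipped with a transverse collar of some width $w_0>0$.

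\emph{Stage (iii).} The restriction of $T$ to the diagonal is the doubling map $x\mapsto 2x\bmod 1$, which is topologically exact, and transverse to the diagonal $T$ expands by $2(1-2c)>1$ (the two facts already recorded in the Lyapunov‑exponent discussion). Tracking the tube forward: whenever a tube‑piece has its diagonal segment straddling $x=\tfrac12$, it is torn at $(\tfrac12,\tfrac12)$ into the two on‑diagonal pieces over the images of $I\cap[0,\tfrac12]$ and $I\cap[\tfrac12,1]$ (each with collar width multiplied by $2(1-2c)$) plus a small off‑diagonal wedge of ``debris'' which is pushed onto the invariant line $x_1+x_2=1$ and discarded. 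Exactness of the doubling map forces the surviving on‑diagonal tube‑pieces to cover the entire diagonal $[0,1]$ after finitely many steps, and to keep covering it at every later step; meanwhile each collar width is multiplied by $2(1-2c)>1$ at every step. Hence there are an $N$ and a component of $T^{N}G$ containing a tube over $\{(x,x):x\in[\tfrac14-2\epsilon,\tfrac14+2\epsilon]\}$ with transverse half‑width $\ge 2\epsilon$. Fixing $\epsilon$ small enough (depending only on $T$; e.g.\ $\epsilon<\tfrac1{32}$ keeps this tube inside one region and thin relative to its distance from the branch locus), this tube contains $B_\epsilon\big((\tfrac14,\tfrac14)\big)=B_\epsilon(M)$, which proves the lemma.

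\emph{Main obstacle.} The delicate part is the book‑keeping of Stage (iii) across the discontinuity of $T$. One must verify that when a thin tube is torn at $(\tfrac12,\tfrac12)$ the two surviving on‑diagonal pieces are genuinely tubes whose collar widths are only mildly perturbed, so that the effective transverse expansion factor stays bounded below by some $\lambda>1$ uniformly, and that keeping the collars thin relative to their distance from the branch locus forces no additional cuts between consecutive tears — precisely the point where the discontinuity of the Lorenz‑type map makes the argument heavier than the tent‑map version in \cite{li2021intermittent}. A secondary technical point is extracting in Stage (i) a true ball around $(\tfrac12,\tfrac12)$ rather than merely the point; this is handled by the interior‑of‑convex‑hull observation above (or, if one prefers, by an additional iterate that turns the region‑$1$ side of $\Omega_1$ into a full quarter‑disc neighbourhood of $(1,1)$).
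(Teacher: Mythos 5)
Your stages (i)--(ii) are sound and parallel the paper's opening moves (Lemmas \ref{lemma9}--\ref{lemma11} produce a two--dimensional convex component with \((\tfrac12,\tfrac12)\) in its interior). The divergence, and the gap, is in stage (iii). By pushing the lower--left quarter of \(B_{\rho_1}(\tfrac12,\tfrac12)\) to the fixed point \((1,1)\), you force the growing diagonal segment to cross the discontinuity at \((\tfrac12,\tfrac12)\) before it can reach \(M=(\tfrac14,\tfrac14)\), and the covering argument you then invoke does not go through as stated. The tube is not torn ``at the point \((\tfrac12,\tfrac12)\)'': it is cut by the two full lines \(x_1=\tfrac12\) and \(x_2=\tfrac12\), so each surviving on--diagonal piece is a tube that \emph{tapers to zero transverse width} at its freshly cut end. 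Consequently the assertions ``each collar width is multiplied by \(2(1-2c)\) at every step'' and ``the pieces cover the entire diagonal with transverse half--width \(\ge 2\epsilon\)'' are false for the family as a whole --- the infimum of the collar widths over a covering family is \(0\) at every new end, and new ends are created at every tearing. You correctly identify this book--keeping as the ``main obstacle,'' but you do not resolve it, and it is precisely the crux of the lemma. (Your route is salvageable without full exactness: once the tube from \((1,1)\) reaches over \([\tfrac12,1]\), the affine branch on \([\tfrac12,1]^2\) sends the untapered portion of the tube over a small diagonal interval around \(\tfrac58\) onto a genuine tube around \(T(\tfrac58,\tfrac58)=(\tfrac14,\tfrac14)\); but that one--step observation is not what you wrote.)

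The paper avoids stage (iii) entirely by choosing the \emph{other} fixed point. It takes \(G_1=G_0\cap[\tfrac12,1]^2\), a piece of the neighbourhood of \((\tfrac12,\tfrac12)\) on which \(f(x)=2x-1\); then \(TG_1\) contains the fixed point \(O=(0,0)\) together with a diagonal segment of positive length \emph{anchored at \(O\)}. Under iteration this segment stays anchored at \(O\) and doubles in length, and it is not cut by any discontinuity until it extends past \((\tfrac12,\tfrac12)\); since \(|OM|=\tfrac{\sqrt2}{4}<\tfrac{\sqrt2}{2}\), some iterate covers \(M\) \emph{before} any cut occurs, and openness of the iterated set gives the required neighbourhood of \(M\). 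The target \(M=(\tfrac14,\tfrac14)\) is chosen exactly so that it lies strictly between the anchoring fixed point and the nearest discontinuity on the diagonal --- the structural point your proposal misses. So: right skeleton, but the one genuinely hard step is left open, and the paper's choice of fixed point makes that step unnecessary.
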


	 \begin{proof}
	 	By Lemma \ref{lemma10} and Lemma \ref{lemma11}, we know that for any open set \( G \),  there exists a component that intersects the diagonal  \( x_1 = x_2 \) after iterations.
	 	We denote the convex set of this component after iterations as \( \widehat{G} \) such that \( \widehat{G} \) intersects with the diagonal.
	 	
	 	Before \( \widehat{G} \) intersects the point \( (1/2, 1/2) \) under iterations, the length of its intersection with the set \( \{x_1 = x_2\} \) increases by a factor of \( 2 \) with each iteration, it follows that \( \widehat{G} \) will eventually intersect \( (1/2, 1/2) \) under iterations.
	 	
	 	Let \( i_0 \in \mathbb{N} \) be the number of the iterations such that \( (1/2, 1/2) \notin T^{n}(\widehat{G}) \), $n=1,2, \cdots, i_{0}-1$ and \( (1/2, 1/2) \in T^{i_0}(\widehat{G}) \), then we can define an open set \( G_0 \subset T^{i_0}(\widehat{G}) \) such that  \( (1/2, 1/2) \in G_0 \).
	 	Let \( l_0 = |G_0 \cap \{x_1 = x_2\}| > 0 \), and we define \( G_1 = G_0 \cap \text{region 2} \) and  \(l_{1}= |G_1 \cap \{x_1 = x_2\}|>0  \).
	 	Then $O=(0,0)\in TG_{1}$ and $ |TG_1 \cap \{x_1 = x_2\}|=2l_{1} $, as shown in Fig. \ref{unique1}.
	 	% Additionally, we define the points \( P = (k/2, k/2) \) and \( Q = (k(1 - k/2), k(1 - k/2)) \), as shown in Fig. \ref{unique}. Note that \( T(1/2, 1/2) = P \) and \( T(P) = Q \).
	 	
	 	If \( M \in TG_1 \), then the conclusion holds directly.
	 	Therefore, we only need to consider  \( M \notin TG_1 \).
	 	In this case, we have
	 	\[
	 	|T^{2}G_1 \cap \{x_1 = x_2\}|=2^{2}l_{0} \quad \text{and} \quad O=(0,0)\in T^{2}G_{1}
	 	\]
	 \begin{figure}
	 	\centering
	 	\includegraphics[width=0.5\textwidth]{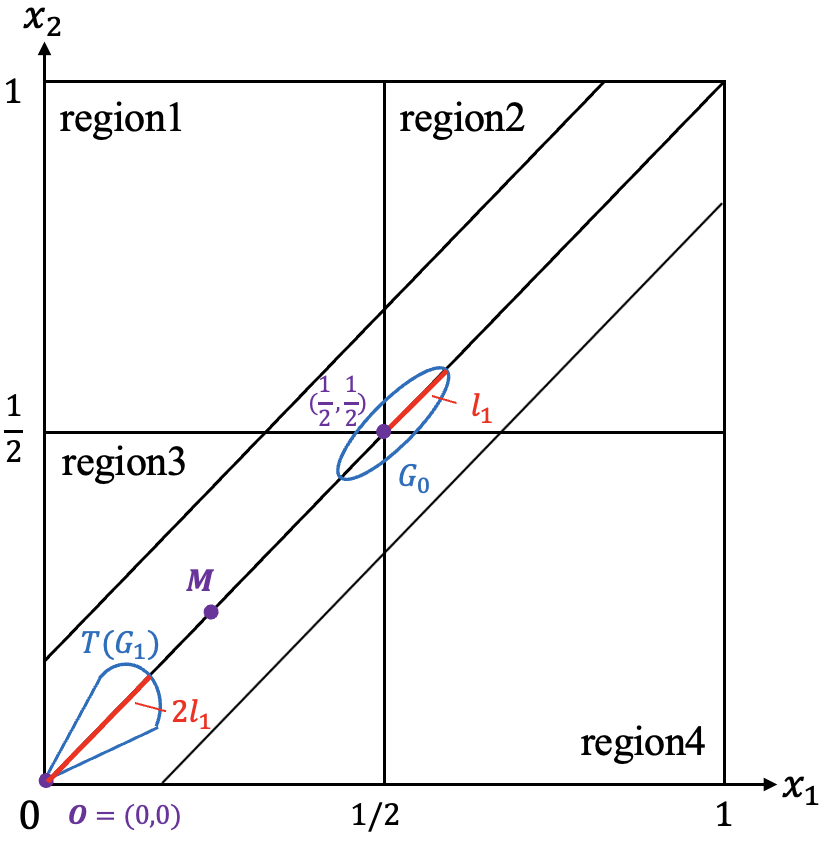}
	 	\caption{The figure shows that $(1/2, 1/2) \in  G_0 $, $G_1 = G_0 \cap~ \text{region} ~2$ and \( l_{1}=|G_1 \cap \{x_1 = x_2\}| \).}
	 	\label{unique1}
	 \end{figure}
	 Since $O$ is a fixed point for $T$, then for any $i\in \mathbb{N}$, we have
	 	\[
	 |T^{i}G_1 \cap \{x_1 = x_2\}|=2^{i}l_{0} \quad \text{and} \quad O=(0,0)\in T^{i}G_{1}
	 \]
	 Therefore, there exist $j\in \mathbb{N}$ such that
	 	\[
	 |T^{j}G_1 \cap \{x_1 = x_2\}|=2^{j}l_{0} >|OM|
	 \]
	 	which implies \( M \in T^{j}G_1 \), and the conclusion follows.
	 \end{proof}
	
	 Note that for $0\le c<1/4$, each eigenvalue of $T$ possesses an absolute value strictly larger than 1 at every smooth point.
	 From Theorem 3 in Tsujii \cite{tsujii2001absolutely}, we conclude that for CML with standard Lorenz map $f(x)=2x ~ \text{mod}~ 1$, there exist finitely many absolutely continuous ergodic probability measures \( \mu_i \), where \( 1 \leq i \leq p \), and the basin of each measure \( \mu_i \), defined as
	 \[
	 \text{Basin}(\mu_i) = \left\{ x \in [0,1]^2 \mid \frac{1}{n} \sum_{j=0}^{n-1} \delta_{T^j(x)} \xrightarrow{w} \mu_i \right\},
	 \]
	 is an open set modulo sets of Lebesgue measure zero, and the union \( \bigcup_{i=1}^p \text{Basin}(\mu_i) \) has full Lebesgue measure in \( [0,1]^2 \).
	
	 For two distinct absolutely continuous ergodic measures \( \mu_1 \) and \( \mu_2 \), we define two small balls \( B_1 \subset \text{Basin}(\mu_1) \) and \( B_2 \subset \text{Basin}(\mu_2) \).
	 From Lemma \ref{lemma14}, we know that after iterations, both \( B_1 \) and \( B_2 \) will contain neighborhoods of \( M \), denoted as \( M_{\epsilon_1} \) and \( M_{\epsilon_2} \) respectively.
	 Therefore, both \( B_1 \) and \( B_2 \) will contain a common neighborhood of \( M \)  after iterations, specifically \( M_{\epsilon_1} \cap M_{\epsilon_2} \).
	 This shows that basin of different absolutely continuous ergodic measure $\mu_{i}$ will contain the same open set modulo a Lebesgue null set after iterations.
	 Therefore, there can be only one absolutely continuous ergodic probability measure,  thereby proving the uniqueness of the measure.

	  %%%--------------------------------------------------------------------------------------------------------------------------------------------
	
	 \section{\label{sec:4}Intermittent-synchronization and unique ACIM of CML with the map $f(x)=\pm 3x ~\text{mod}~ 1$}
	 In this section,  we consider two-node CML where the map is  $f(x)=\pm 3x ~\text{mod}~1$, i.e., Thm. \ref{Th2}.
	 Similar to Sec. \ref{sec:3}, we also obtain a complete range of coupling strength for intermittent-synchronization and prove the uniqueness of ACIM for CML.
	
	% In this paper, another one of our main results is as follows:

	\subsection{Proof of the intermittent-synchronization when  \( c \in [0, \frac{1}{3}) \).}

	 To prove the intermittent-synchronization of Thm. \ref{Th2}, we also need some lemmas to prove Lemma \ref{lemma6} and  Lemma \ref{lemma8}  in Sec. \ref{sec:3}.
	
	\begin{lemma}\label{lemma15}
		If ~$\Omega$~ is a convex set in one of the nine regions~of~$[0,1]^{2}$~and~$T(\Omega)$~has an intersection with each region simultaneously, then~$(\frac{1}{2},\frac{1}{2})$~lies in~$T(\Omega)$.
		
		Moreover, we have that for any~$\epsilon>0$, ~$\frac{M(T(\Omega)\cap O_{\epsilon})}{M(T(\Omega))}\geq \frac{1}{2}\epsilon^{2}$~holds true, where~$O_{\epsilon}=\{\bm{p}\in [0,1]^{2} | dist( \bm{p}, S_{inv})\leq \epsilon\}$.
	\end{lemma}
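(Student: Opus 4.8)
The plan is to run the argument of Lemma~\ref{lemma9} almost verbatim, reducing the nine-region situation to the four-region one. Two observations make this possible. First, on each of the nine small squares $Q_J$ determined by $x_1=\tfrac13,\tfrac23$ and $x_2=\tfrac13,\tfrac23$, the map $T$ is affine: on the relevant monotonicity interval one has $f(x)=\pm 3x+\mathrm{const}$, so $\bm f$ and hence $T=(I+cA)\bm f$ are affine on $Q_J$, and consequently $T(\Omega)$ is convex whenever $\Omega\subset Q_J$ is convex. Second, the four ``corner'' squares $[0,\tfrac13]^2$, $[\tfrac23,1]\times[0,\tfrac13]$, $[0,\tfrac13]\times[\tfrac23,1]$ and $[\tfrac23,1]^2$ are contained, respectively, in the four quadrants $[0,\tfrac12]^2$, $[\tfrac12,1]\times[0,\tfrac12]$, $[0,\tfrac12]\times[\tfrac12,1]$ and $[\tfrac12,1]^2$ cut out by the lines $x_1=\tfrac12$, $x_2=\tfrac12$.

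For the first assertion I would use the hypothesis that $T(\Omega)$ meets every one of the nine regions to pick, in particular, four points $A_1,A_2,A_3,A_4\in T(\Omega)$, one in each of the four corner squares above. By the second observation these four points lie in four distinct quadrants of $[0,1]^2$, so the separating-functional argument already used in Lemma~\ref{lemma9} applies unchanged: a linear functional strictly positive on all four points would have to be positive on a point of each quadrant, which forces a sign contradiction. Hence $(\tfrac12,\tfrac12)\in\mathrm{conv}(A_1,A_2,A_3,A_4)$, and by the first observation $\mathrm{conv}(A_1,\dots,A_4)\subset T(\Omega)$, so $(\tfrac12,\tfrac12)\in T(\Omega)$.

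For the measure estimate I would repeat the argument of Lemma~\ref{lemma9} line by line. Put $H_\epsilon=\{\bm p\in[0,1]^2:\mathrm{dist}(\bm p,S_{inv})=\epsilon\}$ and $\overline H_\epsilon=H_\epsilon\cap T(\Omega)$. If $\overline H_\epsilon=\emptyset$, then since the convex set $T(\Omega)$ contains the point $(\tfrac12,\tfrac12)\in S_{inv}$ and avoids the boundary $H_\epsilon$ of $O_\epsilon$, it cannot reach $D_\epsilon$, so $T(\Omega)\subset O_\epsilon$ and we are done. Otherwise let $L$ be the union of all segments joining $(\tfrac12,\tfrac12)$ to points of $\overline H_\epsilon$; convexity gives $L\subset T(\Omega)$, $T(\Omega)\setminus(L\cap T(\Omega))\subset T(\Omega)\cap O_\epsilon$, and $L\cap O_\epsilon=(L\cap T(\Omega))\cap O_\epsilon$, so it suffices to bound $M(L\cap O_\epsilon)/M(L\cap T(\Omega))$ from below. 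Viewing $L$ in polar coordinates about $(\tfrac12,\tfrac12)$, along every ray the chord inside $[0,1]^2$ has length $\le\sqrt2$ while the chord inside $O_\epsilon$ has length $\ge\epsilon$; integrating $\tfrac12\rho^2$ in the angular variable yields $M((L\cap T(\Omega))\cap O_\epsilon)\ge(\epsilon/\sqrt2)^2 M(L\cap[0,1]^2)\ge\tfrac{\epsilon^2}{2}M(L\cap T(\Omega))$, whence $M(T(\Omega)\cap O_\epsilon)/M(T(\Omega))\ge\tfrac12\epsilon^2$.

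The only step meriting any care — and the closest thing to an obstacle — is confirming that convexity of $T(\Omega)$ persists for the orientation-reversing map $f(x)=-3x\bmod 1$ of Theorem~\ref{Th2}(2); but an affine map with negative linear part still carries convex sets to convex sets, so the first observation holds there as well, and every subsequent step is insensitive to orientation. In summary, the nine-region lemma requires no new idea beyond Lemma~\ref{lemma9}: the five extra regions are harmless because only the four corner squares are ever used.
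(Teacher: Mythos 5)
Your proposal is correct and follows essentially the same route as the paper: the paper likewise picks a witness point of $T(\Omega)$ in each region, notes their convex hull lies in the convex set $T(\Omega)$ and contains $(\tfrac12,\tfrac12)$, and then runs the identical cone/polar-coordinate estimate from Lemma~\ref{lemma9} to get the $\tfrac12\epsilon^2$ bound. The only (harmless) difference is that you isolate the four corner squares and spell out the separating-functional argument, whereas the paper uses all nine witness points and asserts the convex-hull containment directly.
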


	\begin{proof}
		We set  nine regions of phase space  $[0,1]^{2}$ cut by $x_{1}=1/3$,  $x_{1}=2/3$  and $x_{2}=1/3$, $x_{2}=2/3$ as region 1-9 respectively, as shown in Fig. \ref{3xmod1frontimage}.
		Because~$T(\Omega)$~intersects with these nine regions, the points~$A_{J_{1}},\cdots, A_{J_{9}}$ in each of the nine regions belong to~$T(\Omega)$.
		From the nature of the convex set, we can know that the convex hull determined by these points is a subset of~$T(\Omega)$~and then the point~$(\frac{1}{2},\frac{1}{2})$~is in it.
		
		Moreover, we set~$H_{\epsilon}$~be the line
		$$
		\{\bm{p}\in[0,1]^{2}|dist(\bm{p},S_{inv})=\epsilon\}
		$$
		and define~$\overline{H}_{\epsilon}=H_{\epsilon}\cap T(\Omega)$.
		Furthermore, we set~$L$~be the point set of the union of all lines connecting~$\overline{H}_{\epsilon}$~and the point~$(\frac{1}{2},\frac{1}{2})$.
		From the above conclusion, it can be seen that ~$(\frac{1}{2},\frac{1}{2})\in T(\Omega)$~and if~$\overline{H}_{\epsilon}$~is empty, then~$T(\Omega)\subset O_{\epsilon}$~and the proof is complete from the convexity of~$T(\Omega)$.
		Thus, we can assume that~$\overline{H}_{\epsilon}$~is nonempty. then we get~$T(\Omega)\backslash(L\cap T(\Omega))\subset T(\Omega)\cap O_{\epsilon}$~obviously.
		
		Then we need to focus the analysis on~$L \cap T(\Omega)$. Again by the convexity, we have that~$L \cap O_{\epsilon}\subset T(\Omega)$~which further implies~$L \cap O_{\epsilon}=(L\cap T(\Omega))\cap O_{\epsilon}$.
		In order to complete our certificate, it is sufficient to estimate~$\frac{M(L \cap O_{\epsilon})}{M(L \cap T(\Omega))}$. Note that~$L \cap T(\Omega)\subset L\cap [0,1]^{2}$. For each line~$l\in L$, It is easily to see that the length~$l\cap [0,1]^{2}$~is less than~$\sqrt{2}$~while the length of~$l\cap O_{\epsilon}$~is larger than~$\epsilon$. thus we have that:
		$$
		M((L\cap T(\Omega))\cap O_{\epsilon} )\\  \geq (\frac{\epsilon}{\sqrt{2}})^{2}M(L \cap [0,1]^{2})
		\geq \frac{\epsilon^{2}}{2}M(L\cap T(\Omega))
		$$
		Since ~$\frac{M( T(\Omega)\cap O_{\epsilon} )}{M(T(\Omega))}>\frac{M((L\cap T(\Omega))\cap O_{\epsilon} )}{M(L\cap T(\Omega))}\geq \frac{\epsilon^{2}}{2}$,  we have completed the whole certificate.
	\end{proof}	
	
	 \begin{lemma}\label{lemma16}
	 	For any $0\leq c <\frac{1}{3}$, $O_{\epsilon}$ (the $\epsilon$-neighborhood of $S_{inv}$ defined above) and any convex set~$\Omega_{0}$~in one of the nine regions with measure less than~$0.5\cdot 10^{-4} \epsilon^{2}$, there exist constants ~$\delta_{1}, \beta_{1}>0$~and disjoint convexities~$\Omega_{i}\subset \Omega_{0} ,i=1,2,\cdots$~satisfying the following two conclusions:
	 	\begin{enumerate}
	 		\item for any~$i\geq 1$, there exists a~$t(i)$~such that~$T^{t(i)}(\Omega_{i})$~is a component of~$T^{t(i)}(\Omega_{0})$~with~$M(T^{t(i)}(\Omega_{i}))\geq \delta_{1}$;
	 		\item ~$M(\underset{i\geq 0}{\cup}\Omega_{i})\geq \beta_{1}M(\Omega_{0})$.
	 	\end{enumerate}
	 \end{lemma}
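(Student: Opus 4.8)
The plan is to reproduce the architecture of the proof of Lemma~\ref{lemma10}, with the four regions and the expansion factor $2$ replaced by the nine regions and the expansion factor $3$, and the two cut-lines $x_1=\frac{1}{2},\ x_2=\frac{1}{2}$ replaced by the four cut-lines $x_1=\frac{1}{3},\ x_1=\frac{2}{3},\ x_2=\frac{1}{3},\ x_2=\frac{2}{3}$. The first step is the opening reduction. Since $T(\bm{x})\in S_{inv}$ exactly when $(1-2c)\bigl(f(x_1)-f(x_2)\bigr)=0$, i.e. (as $c<\frac{1}{3}$) when $f(x_1)=f(x_2)$, and since for $f(x)=\pm 3x\bmod 1$ this equality holds precisely on the five parallel lines $x_2=x_1$ and $x_2=x_1\pm\frac{1}{3}$, $x_2=x_1\pm\frac{2}{3}$, these five lines are the pre-images of the diagonal; the four intersection points $(\frac{1}{3},\frac{1}{3}),(\frac{1}{3},\frac{2}{3}),(\frac{2}{3},\frac{1}{3}),(\frac{2}{3},\frac{2}{3})$ of the cut-lines all lie on them. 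If $\Omega_0$ or one of its forward iterates meets the diagonal, the two conclusions hold as in Lemma~\ref{lemma10}, so I may assume that $\Omega_0$ and all its iterates avoid $\{x_1=x_2\}$ together with all of its pre-image lines. I then enclose $\Omega_0$ in the rectangle $S_{\Omega_0}$ of Lemma~\ref{lemma10}, with $0.5\,M(S_{\Omega_0})\le M(\Omega_0)\le M(S_{\Omega_0})$, so that it suffices to control the number of components of $T^{i}(S_{\Omega_0})$; writing $d_l\ge d_w$ for the side lengths of $S_{\Omega_0}$, one has $d_l d_w=M(S_{\Omega_0})\le 2M(\Omega_0)<10^{-4}\epsilon^{2}$.

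As in Lemma~\ref{lemma10} there are two cases. When $d_w\le d_l<\epsilon$, the rectangle $S_{\Omega_0}$ has diameter less than $\epsilon$, hence is cut into at most a bounded number $a_0$ of pieces (otherwise the conclusions hold directly), and the Iterative Lemma~\ref{Lem:diedai} and Corollary~\ref{Coro:diedai} apply with a small $m_0$, $a=a_0$, and $E_-(c)=9(1-2c)$, exactly as in Lemma~\ref{lemma10}. When $d_l>\epsilon$, hence $d_w<10^{-4}\epsilon$, I would run the ``good edge'' bookkeeping. The slope of a line is transported by $T$ through the same M\"obius map $k\mapsto\frac{c+(1-c)k}{(1-c)+ck}$ (region-independent, because $|f'|$ is constant), whose fixed points are $\pm1$ with $+1$ attracting and $-1$ repelling for $0\le c<\frac{1}{3}$; consequently the common good-edge direction $v_i$ of the components of $T^{i}(S_{\Omega_0})$ is parallel to one of the four cut-lines only for boundedly many $i$. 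Away from those exceptional steps a component is cut by the cut-lines into pieces whose good-direction extents sum to at most the parent's, so with $\|JT\|\le 3$ the total good-edge length $d^{i}_{l}$ satisfies $3(1-2c)\,d^{i-1}_{l}\le d^{i}_{l}\le 3\,d^{i-1}_{l}$; at each of the boundedly many exceptional steps only a bounded multiplicative constant is lost, so $d^{i}_{l}\le C\,3^{i}d_l$ for some $C=C(c)$.

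The remaining step is to turn this into a bound on the number of components of $T^{i}(S_{\Omega_0})$ that still grows at the exponential rate $3$. Calling a component ``long'' if its good edge has length $\ge\epsilon$ and ``short'' otherwise, the number $M_i$ of long components obeys $M_i\,\epsilon\le d^{i}_{l}\le C\,3^{i}d_l$; bounding the number of short components created at each step by a fixed multiple of the current number of components, as in Lemma~\ref{lemma10}, and unrolling the resulting recursion against $M_k\le C\,3^{k}d_l/\epsilon$, one obtains that $T^{i}(S_{\Omega_0})$ has at most $\frac{C'd_l}{\epsilon}(i+1)3^{i}$ components. Since $3(1-2c)>1$ for $0\le c<\frac{1}{3}$, a bound of the form $\mathrm{poly}(i)\cdot 3^{i}$ is eventually dominated by $(9(1-2c))^{i}=E_-^{i}(c)$; taking $m_0=N_0(c)$ to be the first exponent for which this holds and $a=\frac{C'd_l}{\epsilon}(N_0(c)+1)3^{N_0(c)}$, the Iterative Lemma~\ref{Lem:diedai} and Corollary~\ref{Coro:diedai} with $E_-(c)=9(1-2c)$ furnish the disjoint sub-convexities $\Omega_i\subset\Omega_0$ and the constants $\delta_1,\beta_1>0$ with the two required properties. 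The argument for $f(x)=-3x\bmod 1$ is identical, since there too $|f'|\equiv 3$, leaving the slope map and all length bounds unchanged.

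The step I expect to be the genuine obstacle is precisely this count. With four cut-lines and nine regions, a single convex piece can naively split into more than three sub-pieces at one step --- a thin sliver nearly parallel to the diagonal can cross all four cut-lines at once --- so one cannot simply transcribe the ``$m_i\le 2m_{i-1}+2M_{i-1}$'' step of Lemma~\ref{lemma10}, and a per-step branching of $9$ would make $a<E_-^{m_0}(c)$ fail for $c$ near $\frac{1}{3}$. The point that must be established carefully, exploiting that every component is disjoint from $\{x_1=x_2\}$ and its pre-images and that $v_i$ stays away from the axis directions except at boundedly many steps, is that the number of components of $T^{i}(S_{\Omega_0})$ nevertheless grows only like $\mathrm{poly}(i)\cdot 3^{i}$, i.e. at the one-dimensional expansion rate; this is exactly what keeps the hypothesis $a<E_-^{m_0}(c)$ of the Iterative Lemma available throughout the range $0\le c<\frac{1}{3}$.
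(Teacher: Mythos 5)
Your proposal follows the paper's proof of Lemma~\ref{lemma16} essentially step for step: the same opening reduction to sets avoiding $\{x_1=x_2\}$ and its pre-image lines $x_2=x_1\pm\frac13,\ x_2=x_1\pm\frac23$, the same enclosing rectangle $S_{\Omega_0}$, the same split into the cases $d_l<\epsilon$ and $d_l>\epsilon$, the same good-edge bookkeeping $3(1-2c)d_l^{i-1}\le d_l^i\le 3d_l^{i-1}$ with boundedly many exceptional steps, and the same final appeal to Lemma~\ref{Lem:diedai} and Corollary~\ref{Coro:diedai} with $E_-(c)=9(1-2c)>3$. The one step you explicitly leave open --- the per-iterate branching count --- is closed in the paper by the long/short dichotomy you already set up: the number $M_i$ of long components is controlled at rate $3^i$ by the total good-edge length ($M_i\epsilon\le d_l^i\le 2\cdot 3^i d_l$), and only the short components can branch without paying in good-edge length, for which the paper uses $m_i\le 2m_{i-1}+2M_{i-1}$ and unrolls against $M_k\le 2d_l3^k/\epsilon$ to get a total count of order $d_l 3^i/\epsilon$. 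Your worry about a thin sliver nearly parallel to the diagonal crossing all four cut-lines is legitimate as stated --- a sliver along $x_1+x_2=1-\eta$ does cross all four lines while avoiding the diagonal and its pre-images, so a single long component can in fact spawn more than two short pieces, and the paper's stated constant ``two'' is too optimistic. But this does not break the argument: the extra pieces arise only near the four lattice points $(\tfrac{i}{3},\tfrac{j}{3})$, each such near-miss contributes one additional \emph{short} interior piece (the long interior pieces between distinct parallel cut-lines have good edge of length about $\tfrac13>\epsilon$ and are already counted by $M_i$), so the recursion becomes $m_i\le C_1 m_{i-1}+C_2 M_{i-1}$ with slightly larger absolute constants, which still yields $\mathrm{poly}(i)\cdot 3^i$ and is still eventually dominated by $(9(1-2c))^i$ for every $c<\frac13$. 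In short, your architecture is the paper's, and the obstacle you flag costs only constants, not the exponential rate; to make the write-up complete you need only add the observation that an interior piece can be short only when two consecutive crossings lie near a common lattice point, and that each lattice point accounts for at most one such piece per component.
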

	
	 \begin{proof}
	 	We note that if a convex set~$\Omega_{0}$~of~$[0,1]^{2}$~intersects with ~$x_{1}=x_{2}$ or the pre-images of $x_{1}=x_{2}$,  these two conditions obviously hold true.
	 	Therefore, we can assume that~$T(\Omega_{0})$~and its iterations do not intersect with either~$x_{1}=x_{2}$~or the pre-images of $x_{1}=x_{2}$, as shown in Fig. \ref{3xmod1frontimage}.
	 	
	 		\begin{figure}
	 		\centering
	 		\includegraphics[width=0.4\textwidth]{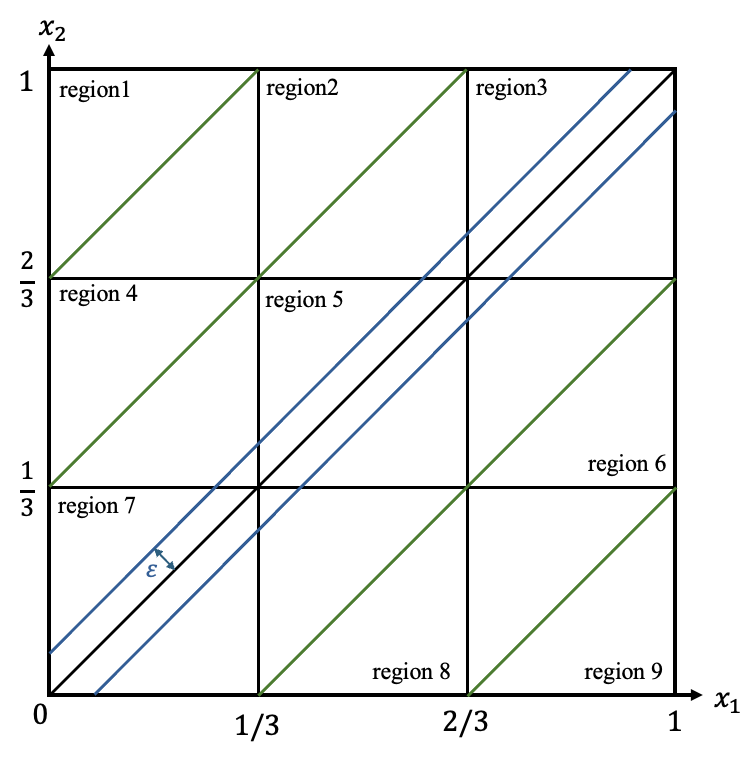}
	 		\caption{The figure shows the pre-images of $x_{1}=x_{2}$ which are shown as green lines.
	 			We can assume that~$T(\Omega_{0})$~and its iterations do not intersect with either~$x_{1}=x_{2}$~or~its pre-images.
	 			The nine regions of phase space  $[0,1]^{2}$ cut by $x_{1}=\frac{1}{3}$,  $x_{1}=\frac{2}{3}$  and $x_{2}=\frac{1}{3}$, $x_{2}=\frac{2}{3}$ are region 1-9 respectively.}
	 		\label{3xmod1frontimage}
	 	\end{figure}
	 	
	 	Similar to Lemma \ref{lemma10}, for any convex set~$\Omega_{0}$, we can also define the corresponding rectangle as $S_{\Omega_{0}}$ and $S_{\Omega_{0}}$~satisfies~$\frac{1}{2}M(S_{\Omega_{0}}) \leq M(\Omega_{0}) \leq M(S_{\Omega_{0}})$, as shown in Fig. \ref{rectangle}.
	 	
	 	In order to calculate the number of components of the convex set $\Omega_{0}$ after iterations, we only need to calculate the number of components of the corresponding rectangle $S_{\Omega_{0}}$ after iterarions.
	 	We know that~$M(\Omega_{0}) <0.5\cdot 10^{-4} \epsilon^{2}$, hence~$M(S_{\Omega_{0}}) \leq 2M(\Omega_{0}) < 10^{-4}\epsilon^{2}$.
	 	We set the length of~$S_{\Omega_{0}}$~to~$d_{l}$~and the width to~$d_{w}$.
	 	
	 		\begin{figure}
	 		\centering
	 		\includegraphics[width=0.48\textwidth]{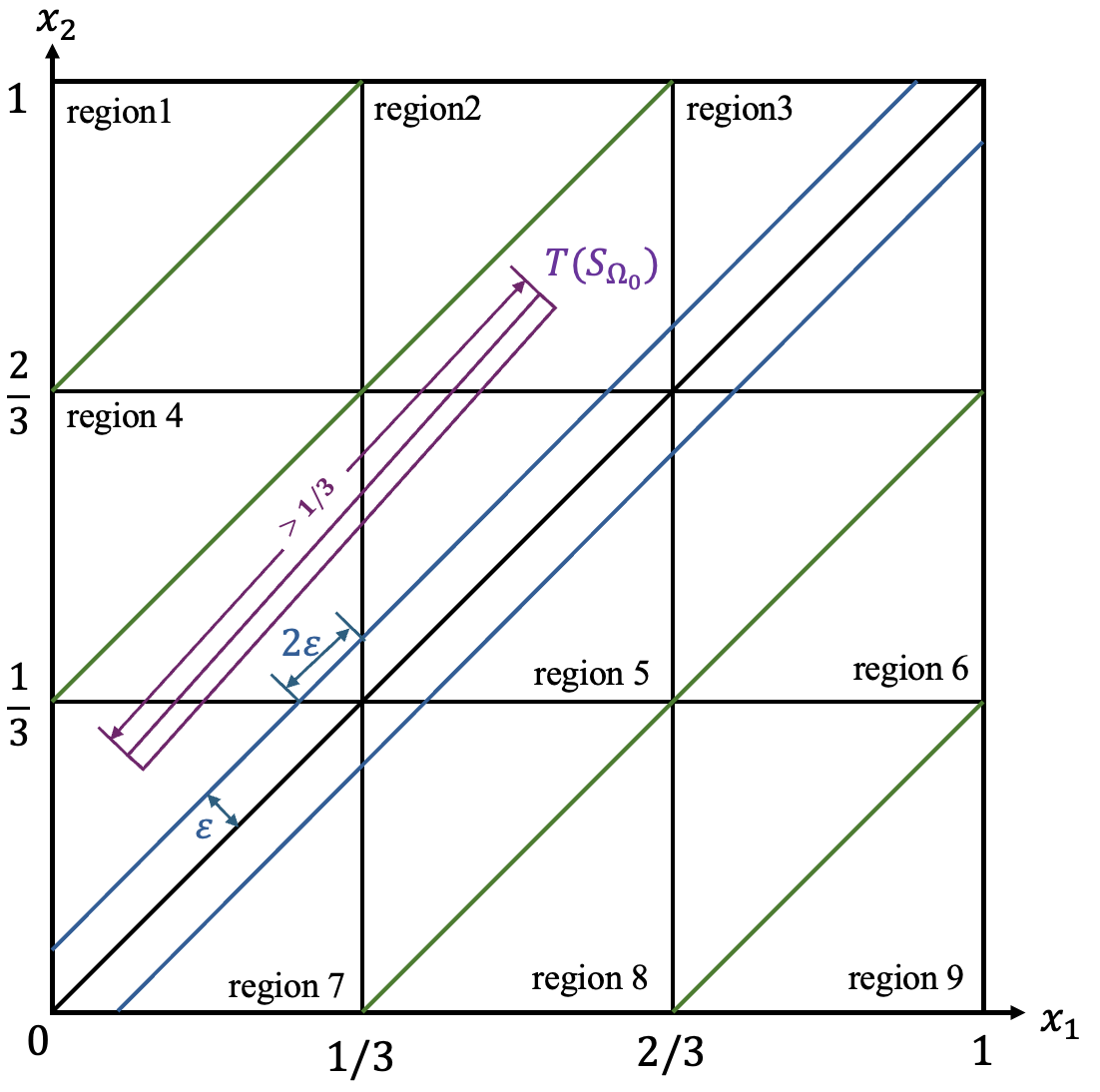}
	 		\caption{The figure shows that when $d_{w}<d_{l}<\epsilon$, if $T(S_{\Omega_{0}})$ has at least 4 components, then its length must be larger than $\frac{1}{3}$.
	 			Therefore $T(S_{\Omega_{0}})$  has at most 3 components.}
	 		\label{3xmod1case1}
	 		
	 	\end{figure}
	 	\textbf{Case i}:  If $d_{w}<d_{l}<\epsilon$, the length of $T(S_{\Omega_{0}})$ is less than $3\epsilon$,
	 	this indicate that $T(S_{\Omega_{0}})$ has at most 3 components, as shown in Fig. \ref{3xmod1case1}.
	 	%otherwise $T(S_{\Omega_{0}})\cap O_{\epsilon}\neq \emptyset$, which means that the conclusions are valid,
	 	Therefore, in this case, $T(\Omega_{0})$ has at most 3 components and $M(T(\Omega_{0}))>9(1-2c)M(\Omega_{0})>3 M(\Omega_{0})$,  we can apply Iteration Lemma \ref{Lem:diedai} and Corollary \ref{Coro:diedai} with $m_{0}=1$, $a=3$, thus we complete the proof of this lemma.
	 	
	 	\textbf{Case ii}: If $d_{l} > \epsilon$, then it follows that $d_{w} <  10^{-4}\epsilon$.
	 	For any line in $[0,1]^{2}$ with the slope $k$, regardless of the region where the line is located, after one iteration, the slope changes to
	 	$$
	 	k \stackrel{T}{\longrightarrow} \frac{c + (1-c)k}{(1-c) + ck}.
	 	$$
	 	Additionally, when $k = 0$ or $k = \infty$, the slope after iterations will not approach $0$ or $\infty$.
	 	Thus, for any line in $[0,1]^{2}$, the slopes of all components of the line after each iteration  are the same.
	 	Furthermore, if the components of the line after some iteration are parallel to $x_{1} = 1/3$ or $x_{2} = 1/3$, they will not approach parallel to  $x_{1} = 1/3$ or $x_{2} = 1/3$ again after iterations.
	 	
	 	Let the long edge of $S_{\Omega_{0}}$ be $D_{L}$, then we have $|D_{L}|=d_{l}>\epsilon$.
	 	For every component in $T^{i}(S_{\Omega_{0}})$, we set the edge parallel to $T^{i}(D_{L})$ as  'good edge'.
	 	Then we set
	 	$$
	 	d^{i}_{l}=\sum |\text{'good~ edge' ~in ~every ~component} \in T^{i}(S_{\Omega_{0}}) |.
	 	$$
	 	Clearly,  under normal circumstances, we have
	 	$$
	 	3(1-2c) d^{i-1}_{l} \leq d^{i}_{l} \leq  3 d^{i-1}_{l}.
	 	$$
	 	However, if $T^{i}(S_{\Omega_{0}})\cap \{x_{1}=\frac{1}{3}\}$ (or $\{x_{1} = \frac{2}{3}\}$, $\{x_{2} = \frac{1}{3}\}$, $\{x_{2} = \frac{2}{3}\}$) $\neq \emptyset$ and
	 	the 'good edges' of the components of $T^{i}(S_{\Omega_{0}})$  are parallel to $x_{1} = 1/3$ (or $x_{1} = 2/3$,  $x_{2} = 1/3$, $x_{2} = 2/3$), then we have $d^{i}_{l} < 6 d^{i-1}_{l}$.
	 	
	 	Now we assume that after $j$ iterations of $S_{\Omega_{0}}$, the 'good edges' of all components are parallel to $x_{1} = 1/3$ (or $x_{1} = 2/3$,  $x_{2} = 1/3$, $x_{2} = 2/3$).
	 	then $d^{j}_{l} < 6 d^{j-1}_{l}$.
	 	Therefore, for $i > j$, we have
	 	\begin{eqnarray*}
	 		d^{i}_{l} &\leq &  3^{i-j}  d^{j}_{l} \\
	 		&\leq &  3^{i-j}  \cdot  6 d^{j-1}_{l} \\
	 		&\leq&  3^{i-j}  \cdot  6 \cdot 3^{j-1} d_{l} \\
	 		&=&  2 \cdot 3^{i} d_{l}.
	 	\end{eqnarray*}
	 	
	 	For $T^{i}(S_{\Omega_{0}})$, we define
	 	\begin{eqnarray*}
	 		M_{i}=& \#&\{ \text{ 'long~component' }\in   T^{i}(S_{\Omega_{0}})\mid \\
	 		&|&\text{ the 'good edge' of the component} ~~|\geq \epsilon \},
	 	\end{eqnarray*}
	 	\begin{eqnarray*}
	 		m_{i}=& \#&\{ \text{ 'short~component' }\in   T^{i}(S_{\Omega_{0}}) \mid \\
	 		&| &\text{the 'good edge' of the component}~~| < \epsilon \}.
	 	\end{eqnarray*}
	 	Thus, we have $M_{i} \cdot \epsilon \leq d^{i}_{l} \leq 2 \cdot 3^{i} d_{l}$, which implies that $M_{i} \leq \frac{d_{l}}{\epsilon} \cdot 2 \cdot 3^{i}$.
	 	In addition, any 'long component' can produce up to two  'short components' after one iteration, and any 'short component' can also produce up to two  'short components' after one iteration.
	 	Therefore, we have $m_{i} \leq 2 m_{i-1} + 2 M_{i-1}$.
	 	
	 	Since $d_{l} > \epsilon$, we have $M_{0} = 1$ and $m_{0} = 0$.
	 	For any $i$, this leads to
	 	\begin{eqnarray*}
	 		m_{i} &\leq &  2 m_{i-1} + 2 M_{i-1} \\
	 		&\leq &  2^{2} m_{i-2} + 2^{2} M_{i-2} + 2 M_{i-1} \\
	 		&\leq&  \sum^{i-1}_{k=0} 2^{i-k} M_{k} \\
	 		&\leq & \frac{d_{l}}{\epsilon} 2^{i+1}  \sum^{i-1}_{k=0} (\frac{3}{2})^{k} = \frac{d_{l}}{\epsilon}\cdot 2^{i+1}\cdot 2 ((\frac{3}{2})^{i} -1)\\
	 		&<&\frac{4d_{l}}{\epsilon}\cdot 3^{i}.
	 	\end{eqnarray*}
	 	Therefore, the total number of components of $T^{i}(\Omega_{0})$  is less than
	 	$$
	 	M_{i} + m_{i} \leq \frac{d_{l}}{\epsilon} \cdot (2+4) \cdot 3^{i} = \frac{6d_{l}}{\epsilon}\cdot 3^{i}.
	 	$$
	 	For the fixed $c\in (0,\frac{1}{4})$, we have $E_{-}(c) = 9(1-2c)>3$.
	 	Let $N_{0} (c)= \min \{ i \in \mathbb{N} \mid (9(1-2c))^{i} > \frac{6d_{l}}{\epsilon} \cdot 3^{i} \}$, then we can apply Iteration Lemma \ref{Lem:diedai} and Corollary \ref{Coro:diedai} with $m_{0} = N_{0}(c)$, $a = \frac{6d_{l}}{\epsilon}  \cdot 3^{N_{0}(c)}$, and $E_{-}(c) = 9(1-2c)$. Thus, we complete the proof of this lemma.
	 	
	 	\end{proof}
	 %----------------------------------------------------------------------------------	
	 %----------------------------------------------------------------------------------	
	 %----------------------------------------------------------------------------------	 	
	 \begin{lemma}\label{lemma17}
	 	For any $0\leq c <\frac{1}{3}$,  $O_{\epsilon}$ (the $\epsilon$-neighborhood of $S_{inv}$ defined above),~$\delta_{1}$ defined above, and any convex set~$\Omega_{0}$~in one of the nine regions with measure~$M(\Omega_{0})\geq \delta_{1}$, there exists a constant~$\beta_{2}>0$~depending on~$\epsilon$~and disjoint convexities~$\Omega_{i}\subset \Omega_{0} ,i=1,2,\cdots$~satisfying the following two conclusions:
	 	\begin{enumerate}
	 		\item for any~$i\geq 1$, there exists a~$t(i)$~such that~$T^{t(i)}(\Omega_{i})\subset O_{\epsilon}$;
	 		\item ~$M(\underset{i\geq 0}{\cup}\Omega_{i})\geq \beta_{2}M(\Omega_{0})$.
	 	\end{enumerate}
	 \end{lemma}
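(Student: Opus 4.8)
The plan is to prove Lemma~\ref{lemma17} by running the argument of Lemma~\ref{lemma11} with the four-region/two-component combinatorics replaced by nine-region/three-component combinatorics. The arithmetic that makes this work is that $c<\frac{1}{3}$ is exactly the condition $3(1-2c)>1$, and $3(1-2c)=E_-(c)/3=|\det(I+cA)|\,k_+^2/3$: even after the image of a convex set is cut into three pieces by the partition lines $x_i=\frac{1}{3},\frac{2}{3}$, its largest piece still has strictly larger measure than the original set.

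The first step is the reduction used in Lemma~\ref{lemma16}: since the family of slope-$1$ lines is $T$-invariant, every pre-image of the diagonal $\{x_1=x_2\}$ is a finite union of slope-$1$ segments (at the first level $\{x_2=x_1\pm\frac{1}{3}\}$ and $\{x_2=x_1\pm\frac{2}{3}\}$), and if some forward iterate of $\Omega_0$ or of a component of such an iterate meets $\{x_1=x_2\}$ or one of its pre-images, then a later iterate of that convex component meets $S_{inv}$, whence the second assertion of Lemma~\ref{lemma15} (which uses only convexity and the presence of a point of $S_{inv}$) puts a fraction $\ge\frac{1}{2}\epsilon^2$ of it into $O_\epsilon$. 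So one may assume the whole forward orbit of $\Omega_0$ avoids these slope-$1$ lines, i.e.\ every iterate lies strictly inside a single slope-$1$ strip and never meets the diagonal.

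The main mechanism is a dichotomy applied to the chain $\Omega_0,\Omega_1,\dots$, where $\Omega_{j+1}$ is the largest-measure component of $T(\Omega_j)$ (a convex set inside one of the nine squares, with $M(T(\Omega_j))=9(1-2c)M(\Omega_j)$ since $T$ is affine there). A convex set confined to a slope-$1$ strip and avoiding the diagonal can touch at most five of the nine squares, and it touches four or five of them only when it runs essentially the full length of the near-diagonal strip, in which case it contains a point with $x_1+x_2<1$ and a point with $x_1+x_2>1$ and hence crosses the invariant line $\{x_1+x_2=1\}$ (invariant because $x_1(n+1)+x_2(n+1)=f(x_1(n))+f(x_2(n))=1$ for a.e.\ point of that line, for both $f=\pm 3x\bmod 1$). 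So at each step either $T(\Omega_j)$ has at most three components, giving $M(\Omega_{j+1})\ge\frac{1}{3}\cdot 9(1-2c)M(\Omega_j)=3(1-2c)M(\Omega_j)>M(\Omega_j)$, or $T(\Omega_j)$ crosses $\{x_1+x_2=1\}$ transversally. Since $M(\Omega_j)\ge\delta_1$ while $M(\Omega_j)\le 1$, the first alternative cannot persist, so within boundedly many steps one reaches a convex component crossing $\{x_1+x_2=1\}$. On that line $T$ acts a.e.\ as a piecewise-linear map of slope of modulus $3(1-2c)>1$ whose only interior breakpoints are $(\frac{1}{3},\frac{2}{3})$ and $(\frac{2}{3},\frac{1}{3})$ (pre-images of the diagonal, since $f(\frac{1}{3})=f(\frac{2}{3})=0$), with $(\frac{1}{2},\frac{1}{2})\in S_{inv}$ a fixed point; iterating, the crossing segment stays in one monotone branch and has its length multiplied by $3(1-2c)$ each step until it either meets $(\frac{1}{3},\frac{2}{3})$ or $(\frac{2}{3},\frac{1}{3})$ --- reducing to the previous paragraph --- or grows long enough to contain $(\frac{1}{2},\frac{1}{2})$. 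In the last case the carrying convex component contains a point of $S_{inv}$, so Lemma~\ref{lemma15} again yields a fraction $\ge\frac{1}{2}\epsilon^2$ of it inside $O_\epsilon$; pulling this good subset back through the finitely many affine-on-a-square maps that produced it, and then iterating on the convex pieces of the complement exactly as in the proof of Lemma~\ref{lemma11}, gives the required disjoint convex $\Omega_i\subset\Omega_0$ with $T^{t(i)}(\Omega_i)\subset O_\epsilon$ and $M(\bigcup_i\Omega_i)\ge\beta_2 M(\Omega_0)$.

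The heaviest part of the work is the combinatorial geometry of the nine-square partition together with the discontinuities of $f$: one must pin down exactly which squares a strip-confined convex set can reach (so that ``at most three components'' is genuinely the typical case and ``four or more'' genuinely forces straddling $\{x_1+x_2=1\}$), carry out this bookkeeping separately for $f=3x\bmod 1$ and $f=-3x\bmod 1$, and keep track of the fact that $\{x_1+x_2=1\}$ is invariant only away from the breakpoints $x_1,x_2\in\{\frac{1}{3},\frac{2}{3}\}$ --- which is exactly why it matters that those exceptional points are themselves pre-images of $S_{inv}$. The subtlest point is the quantitative control: ensuring that the number of iterations used, and hence the constant $\beta_2$, depends only on $c$ and $\epsilon$ and not on the shape of $\Omega_0$, which requires a uniform lower bound (in terms of $\delta_1$) on the length of the first transversal crossing of the line $\{x_1+x_2=1\}$.
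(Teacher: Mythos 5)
Your proposal follows essentially the same route as the paper's proof of Lemma \ref{lemma17}: reduce to orbits avoiding the diagonal and its slope-$1$ pre-images, run the dichotomy between at-most-three components (measure growth by the factor $3(1-2c)>1$) and a crossing of the invariant line $\{x_1+x_2=1\}$, grow the crossing segment by that same factor until the carrying component reaches $(\tfrac12,\tfrac12)$, and finish with Lemma \ref{lemma15}. The additional bookkeeping you flag (which of the nine squares a strip-confined convex set can meet, and the fact that the breakpoints of $\{x_1+x_2=1\}$ are themselves pre-images of the diagonal) is consistent with, and somewhat more explicit than, what the paper records.
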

	
	 \begin{proof}
	 	As discussed in Lemma \ref{lemma16}, suppose any convex set and its iterative convex set do not intersect with ~$x_{1}=x_{2}$~and its pre-images, as shown in Fig. \ref{3xmod1frontimage}.
	 	
	 	If $\Omega_{0}$ is divided into at most three components after each iteration, there exists a component $\Omega_{1} \in T(\Omega_{0})$ such that
	 	$$
	 	M(\Omega_{1}) > \frac{1}{3} \cdot 9(1-2c) \cdot M(\Omega_{0}) = 3(1-2c) M(\Omega_{0}).
	 	$$
	 	Thus, we can construct a sequence of convex sets defined by $\Omega_{j+1} = T(\Omega_{j}) \cap~ \text{region}~i$, $i\in 1,2,\cdots,9 $ such that
	 	$$
	 	M(\Omega_{j+1}) > 3(1-2c) M(\Omega_{j}) > (3(1-2c))^{j} M(\Omega_{0}).
	 	$$
	 	Since \( M(\Omega_{0}) \geq \delta_{1} \), it follows that \( (3(1-2c))^{t} M(\Omega_{0}) > 1 \) for some fixed \( t = t(c) \).
	 	Thus, there exists some \( j < t \) such that \( T(\Omega_{j}) \cap ~\text{region} ~ i \neq \emptyset \) for every  \( i=1,2,\cdots 9 \).
	 	Consequently, by applying Lemma \ref{lemma15}, we can conclude that the results hold true.
	 	
	 	If $\Omega_{0}$ is first divided into four or five components after $a$ iterations, then $T^{a}(\Omega_{0})$ intersects with $x_{1} + x_{2} = 1$.
	 	Since \( M(\Omega_{0}) \geq \delta_{1} \), we have $|T^{a}(\Omega_{0}) \cap \{ x_{1} + x_{2} = 1 \}| \geq  \delta_{a} > 0$.
	 	Let $\Omega_{a}$ be the component in $T^{a}(\Omega_{0})$ that intersects with $x_{1} + x_{2} = 1$, then we have $|\Omega_{a} \cap \{ x_{1} + x_{2} = 1 \}| \geq  \delta_{a} > 0$.
	 %	If there exists an integer $i$ such that
	 	%	\begin{eqnarray*}
%	\{ T^{i}(\Omega_{a}) &\cap& x_{1} + x_{2} = 1 \} \cap \\
%	 \{ x_{1} &=& x_{2}, ~\text{the~ pre-images ~of}~x_{1} = x_{2}\} \neq \emptyset,
	 %	\end{eqnarray*}
	 %	then the conclusion holds. Therefore, we assume that for every $i$, we have
	 %	\begin{eqnarray*}
	 %	\{ T^{i}(\Omega_{a}) &\cap& x_{1} + x_{2} = 1 \} \cap \\
	 %	\{ x_{1} &=& x_{2}, ~\text{the~ pre-images ~of}~x_{1} = x_{2}\} = \emptyset.
%	 \end{eqnarray*}
	 Since   $\{x_{1} + x_{2} = 1\}$  in an invariant set for $T$, $\Omega_{a}$ continues to intersect with $x_{1} + x_{2} = 1$ after each iteration.
	 	Therefore, there exists a component $\Omega^{1}_{a} \in T(\Omega_{a})$ such that
	 	\begin{eqnarray*}
	 		|\Omega^{1}_{a} \cap \{ x_{1} + x_{2} =1 \}| &\geq & 3(1-2c)\cdot  |\Omega_{a} \cap \{ x_{1} + x_{2} = 1 \}| \\
	 		&\geq & 3(1-2c) \delta_{a}.
	 	\end{eqnarray*}
	 	Thus, we can construct a sequence of convex sets defined by $\Omega^{j+1}_{a} \subset T(\Omega^{j}_{a})$ such that
	 	\begin{eqnarray*}
	 		|\Omega^{j+1}_{a} \cap \{ x_{1} + x_{2} = 1 \}| &\geq & 3(1-2c)\cdot |\Omega^{j}_{a} \cap \{ x_{1} + x_{2} = 1 \}| \\
	 		&\geq & (3(1-2c))^{j} \delta_{a}.
	 	\end{eqnarray*}
	 	It follows that \( (3(2(1-2c))^{t} \delta_{a} > \frac{\sqrt{2}}{2} \) for some fixed \( t = t(c) \).
	 	Thus, there exists some \( j < t \) such that \( (\frac{1}{2}, \frac{1}{2}) \in \Omega^{j}_{a} \).
	 	Consequently, by applying Lemma \ref{lemma15}, we can conclude that the results hold true.
	 \end{proof}

	 %----------------------------------------------------------------------------------	
	 %----------------------------------------------------------------------------------	
	 %----------------------------------------------------------------------------------	 	

	 \begin{lemma}\label{lemma18}
	 For any $0\leq c <\frac{1}{3}$, there exists a constant \( \delta_{2} \in (0,1) \) such that for almost every segment \( l \) in one of the nine regions with slope \( - 1 \) and measure \( M(l) < \delta_{2} \), there exist constants \( \delta_{3}, \beta_{3} > 0 \) and \( r_{0} \) depending on \( \delta_{2} \), as well as disjoint subsegments \( l_{i} \), \( i=1,2,\ldots \), such that the following conclusions hold:
	 	\begin{enumerate}
	 		\item For any \( i \geq 1 \), there exists a \( t(i) \geq 0 \) such that \( T^{t(i)}(l_{i}) \) is a component of \( T^{t(i)}(l) \) with \( M(T^{t(i)}(l_{i})) \geq \delta_{3} \) or \( T^{j}(l_{i}) \subset D_{r_{0}} \) for some \( j \leq t(i) \);
	 		\item \( M\left(\bigcup_{i} l_{i}\right) \geq \beta_{3} M(l) \).
	 	\end{enumerate}
	 \end{lemma}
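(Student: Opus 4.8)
\noindent The plan is to follow the proof of Lemma~\ref{lemma12} step for step, replacing the slope--$2$ estimates by slope--$\pm 3$ ones and treating both maps in~(\ref{f1f2}) in parallel. Two facts make the transplant work. First, the slope transformation $k\mapsto\frac{c+(1-c)k}{(1-c)+ck}$ computed in the proof of Lemma~\ref{lemma16} fixes $k=-1$ for either choice of $f$, so every forward iterate of a slope--$(-1)$ segment is again a slope--$(-1)$ segment, lying on a line $x_{1}+x_{2}=\text{const}$ and crossing the diagonal orthogonally. Second, for $c\in[0,\tfrac13)$ the curve--expansion constant is $E_{-}(c)=3(1-2c)>1$, so $T$ is genuinely expanding along such segments.

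I would fix such a $c$, set $M=\lfloor\log_{3(1-2c)}3\rfloor+1$ so that $(3(1-2c))^{M}>3$, and then choose $\delta_{2}>0$ and $r_{0}>0$ small enough (as in Lemma~\ref{lemma12}, e.g. $(3(1-2c))^{i}\delta_{2}<2r_{0}$ and $3^{i}r_{0}<\tfrac13$ for $1\le i\le M$). Writing $O_{r}=\{\bm{x}:\mathrm{dist}(\bm{x},S_{inv})\le r\}$, the role played by the line $\{x_{1}+x_{2}=1\}$ in Lemma~\ref{lemma12} is now played by $\{x_{1}+x_{2}=\tfrac23\}$ and $\{x_{1}+x_{2}=\tfrac43\}$: the only points at which the diagonal meets a discontinuity line $x_{1}\in\{\tfrac13,\tfrac23\}$ or $x_{2}\in\{\tfrac13,\tfrac23\}$ of $T$ are $(\tfrac13,\tfrac13)$ and $(\tfrac23,\tfrac23)$, which sit on those two lines.

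The key step is a bound, over the fixed horizon $M$, on how fast the number of components can grow: every sufficiently short slope--$(-1)$ segment $l'$ satisfies that $T^{M}(l')$ has at most $3$ components; equivalently, every slope--$(-1)$ segment $l\subset O_{r_{0}}$ with $M(l)<\delta_{2}$ is $(\delta_{1},M,3)$--good with respect to a suitable domain $D$, for a suitable $\delta_{1}$. To see this, such a segment is far too short to span between two distinct intersection points of the discontinuity grid, so each application of $T$ that splits it does so across at most two discontinuity lines, producing at most $3$ pieces. Moreover $f$ carries each discontinuity abscissa to an endpoint of $[0,1]$ (for $3x\bmod 1$ one has $f(\tfrac13)=f(\tfrac23)=0$ and $f(\tfrac13^{-})=f(\tfrac23^{-})=1$; for $-3x\bmod 1$ the endpoints are swapped), and a slope--$(-1)$ line $x_{1}+x_{2}=b$ can enter at most one of the two ``diagonal quadrants'' around a crossing point (the conditions $b>2p$ and $b<2p$ being exclusive at $(p,p)$, and similarly at $(\tfrac13,\tfrac23)$ and $(\tfrac23,\tfrac13)$); hence after any split exactly one surviving piece is carried into a neighbourhood of a corner of $[0,1]^{2}$ on the diagonal — $(0,0)$ or $(1,1)$ — the others landing near $(0,1)$ or $(1,0)$, inside $D_{r_{0}}$. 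The four corners are fixed points (for $3x\bmod 1$) or form two $2$--cycles (for $-3x\bmod 1$), near which $T$ expands by at most $3$; since $3^{i}r_{0}<\tfrac13$, every post--split piece stays within distance $\tfrac13$ of a corner for the remaining iterations and so never reaches the discontinuity grid again within the first $M$ steps. Thus at most one round of splitting occurs in $M$ iterations, which proves the claim.

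With the component count controlled, I would apply the Iterative Lemma~\ref{Lem:diedai} and Corollary~\ref{Coro:diedai} exactly as in the proof of Lemma~\ref{lemma12}, with $m_{0}=M$, $a=3$ and $E_{-}(c)=3(1-2c)$ (the hypothesis $a<E_{-}^{m_{0}}(c)$ holds by the choice of $M$). This yields disjoint subsegments $l_{1},l_{2},\dots\subset l$ with $M(\bigcup_{i}l_{i})\ge\beta_{3}M(l)$ such that each $T^{t(i)}(l_{i})$ is a component of $T^{t(i)}(l)$ of length at least a definite constant $\delta_{3}$, unless the segment has already left $O_{r_{0}}$ at some step $j\le t(i)$, in which case $T^{j}(l_{i})\subset D_{r_{0}}$; this is precisely conclusions (1)--(2), and the measure--zero family of segments that eventually meet a discontinuity line degenerately is absorbed into ``almost every segment''. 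The main obstacle is exactly the component--count estimate of the previous paragraph: one must check carefully that near the diagonal splitting is confined to $(\tfrac13,\tfrac13)$ and $(\tfrac23,\tfrac23)$, that every split deposits its surviving diagonal descendant on a \emph{corner} rather than back near the discontinuity grid, and that $M$, $r_{0}$, $\delta_{2}$ can be chosen consistently. Relative to Lemma~\ref{lemma12} this only means tracking two bad diagonal points instead of one and accommodating the sign flip in $-3x\bmod 1$, so it adds bookkeeping but no genuinely new idea.
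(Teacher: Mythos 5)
Your proposal is correct and follows essentially the same route as the paper's proof: the same choice of $M=\lfloor\log_{3(1-2c)}3\rfloor+1$ and of $\delta_{2},r_{0}$ with $(3(1-2c))^{i}\delta_{2}<2r_{0}\lesssim 3^{-i}$, the same observation that a split of a short slope--$(-1)$ segment inside $O_{r_{0}}$ can only occur near $(\tfrac13,\tfrac13)$ or $(\tfrac23,\tfrac23)$ (the strips around $x_{1}+x_{2}=\tfrac23$ and $x_{1}+x_{2}=\tfrac43$) and deposits the surviving diagonal piece near the corners $(0,0)$ or $(1,1)$, whence no further splitting within the window, and the same final application of the Iterative Lemma with $m_{0}=M$, $a=3$, $E_{-}(c)=3(1-2c)$. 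Your explicit treatment of the $-3x\bmod 1$ case (corners forming $2$--cycles) is a small addition the paper leaves implicit, but it does not change the argument.
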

	
	 \begin{proof}
	 	Let \( O_{r} = \{\bm{x} \in [0,1]^{2} \mid \text{dist}(\bm{x}, S_{inv}) \leq r\} \) for \( r > 0 \), and denote \( \overline{O}_{r} = \{\bm{x} \in O_{r} \mid \text{dist}(\bm{x}, \{x_{1} + x_{2} = 2/3\}) \leq r\} \);
	 	\(\widehat{O}_{r} = \{\bm{x} \in O_{r} \mid \text{dist}(\bm{x}, \{x_{1} + x_{2} = 4/3\}) \leq r\} \).
	 	Consider the segment \( l \subset O_{r_{0}} \)  with length less than \( \delta_{2} \).
	 	We fix \( c \) such that \( 3|1-2c| > 1 \), let \( M = \lfloor \log_{3(1-2c)}3\rfloor + 1 \), and choose \( \delta_{2} \) sufficiently small and \( r_{0} \) satisfying \( (3(1-2c))^{i} \delta_{2} <2r_{0} <3^{-i} \) for \( i =1, 2, 3, \ldots, M \).
	 	For a segment \( l \subset O_{r_{0}} \), we claim that \( T^{M}(l) \) has at most three components in $O_{r_{0}} $.
	 	
	 	Assuming \( T(l) \) has three components  leads to \( T(l) \subset  \overline{O}_{r_{0}}  \) or  \( T(l) \subset  \widehat{O}_{r_{0}} \).
	 	Without loss of generality, we only consider the former.
	 	Since \( \left(\frac{1}{3}, \frac{1}{3}\right) \in \overline{O}_{r_{0}} \) and the diameter of \( \overline{O}_{r_{0}} \) is less than \( 2r_{0} \), we have that \( T(l) \) is in the \(r_{0} \)-neighborhood of \( \left(\frac{1}{3}, \frac{1}{3}\right) \).
	 	Thus, we find that \( T^{i}(l_{i}) \) is in the \((3^{i-1}r_{0} )\)-neighborhood of \( T^{i-1}\left(\frac{1}{3}, \frac{1}{3}\right) \) for \( 2 \leq i \leq M \).
	 	Noting that  \( T^{j}\left(\frac{1}{3}, \frac{1}{3}\right) = (0,0)~ \text{or}~ (1,1)\) for \( j \geq 1 \) and $3^{i-1}r_{0}<\frac{1}{6}$, we conclude that the \((3^{i-1}r_{0}) \)-neighborhood of \( (0,0) \) or $(1,1)$ and \( \overline{O}_{r_{0}} \) do not intersect, thus proving our claim.
	 	
	 	Since \( (3(1-3c))^{M} > 3 \), we apply Iteration Lemma \ref{Lem:diedai} and Coro. \ref{Coro:diedai} with \( m_{0} = M \), \( a =3\), and \( E_{-}(c) = 3(1-2c) \) to complete the proof of this lemma.
	 \end{proof}

	  \begin{lemma}\label{lemma19}
	 	For any~$0 \leq c < \frac{1}{3}$,~$\delta_3 >0$ stated above, and~$r_0 = \frac{\delta_3}{3}$, consider a segment~$l_0$ in one of the four regions with slope~$-1$ and $M(l_{0})\geq \delta_3 $.
	 	Then there exists a   subsegments~$l^{'}_0$ of ~$l_0$  such that  $l^{'}_0\subset D_{r_0}$ and $M(l^{'}_0) \geq \frac{1}{3} M(l_0)$.
	 	\end{lemma}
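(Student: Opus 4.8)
The plan is to repeat, almost verbatim, the argument used for Lemma~\ref{lemma13}: the statement of Lemma~\ref{lemma19} is identical to that of Lemma~\ref{lemma13} once $\theta_3$ is replaced by $\delta_3$, and the estimate involved depends neither on the map ($2x\bmod 1$ versus $\pm 3x\bmod 1$) nor on the coupling strength $c$. The one fact needed is purely Euclidean: $O_{r_0}=\{\bm{a}\in[0,1]^2:\mathrm{dist}(\bm{a},S_{inv})\le r_0\}$ is the closed convex strip of half-width $r_0$ about the diagonal $x_1=x_2$, and every line of slope $-1$ is orthogonal to that diagonal. Hence such a line meets $O_{r_0}$ in a single sub-segment of arclength at most $2r_0$ (one full width of the strip, measured along the normal direction); explicitly, on $\{x_1+x_2=\text{const}\}$ parametrized by $x_1=t$ the condition $\mathrm{dist}(\bm{a},S_{inv})\le r_0$ becomes $|x_1-x_2|\le\sqrt2\,r_0$, cutting out a $t$-interval of length $\sqrt2\,r_0$ and therefore an arc of length $\sqrt2\cdot\sqrt2\,r_0=2r_0$. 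In particular $M(l_0\cap O_{r_0})\le 2r_0$.

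With this in hand the proof splits into two short cases. If $l_0\cap O_{r_0}=\emptyset$, then $l_0\subset[0,1]^2\setminus O_{r_0}=D_{r_0}$ and one takes $l_0'=l_0$, so there is nothing more to show. If $l_0\cap O_{r_0}\neq\emptyset$, one uses $r_0=\delta_3/3$ together with $M(l_0)\ge\delta_3=3r_0$ to bound the portion of $l_0$ outside the strip:
\[
M\bigl(l_0\setminus O_{r_0}\bigr)\;\ge\;M(l_0)-2r_0\;=\;M(l_0)\Bigl(1-\tfrac{2r_0}{M(l_0)}\Bigr)\;\ge\;M(l_0)\Bigl(1-\tfrac{2r_0}{3r_0}\Bigr)\;=\;\tfrac13\,M(l_0).
\]
Taking $l_0'$ to be the part of $l_0$ lying outside $O_{r_0}$ — at worst a union of two sub-segments of $l_0$ — gives $l_0'\subset D_{r_0}$ and $M(l_0')\ge\tfrac13 M(l_0)$, which is exactly the assertion of the lemma; a picture entirely analogous to Fig.~\ref{Fig.disorder-segment} makes the chain of inequalities transparent.

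I do not anticipate any genuine difficulty. The only point meriting a line of care is the inequality $M(l_0\cap O_{r_0})\le 2r_0$ for slope-$-1$ segments — the same normality observation already invoked in the proof of Lemma~\ref{lemma13} — and everything after it is the arithmetic of the displayed estimate together with the choice $r_0=\delta_3/3$.
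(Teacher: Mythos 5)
Your proposal is correct and is essentially the paper's own proof: the same two-case split (empty versus nonempty intersection with $O_{r_0}$), the same key observation that a slope-$-1$ segment meets the strip $O_{r_0}$ in arclength at most $2r_0$, and the same arithmetic using $r_0=\delta_3/3$. Your explicit justification of the $2r_0$ bound and your remark that $l_0\setminus O_{r_0}$ may a priori consist of two subsegments are small additions of care beyond what the paper writes, but the argument is the same.
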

	 	
	 	\begin{proof}
	 		If $l_{0}\cap O_{r_{0}} = \emptyset$, the conclusion naturally follows.
	 		
	 		If $l_{0}\cap O_{r_{0}} \neq \emptyset$,
	 		since the segment~$l_0$ has slope~$-1$ and~$M(l_0)\geq  \delta_3 =3r_0$, the maximum length of the intersection of $l_{0}$ and $O_{r_{0}}$ is $2r_{0}$, then in this case the length of $l_{0}$ outside $O_{r_{0}}$ is larger than
	 		$$
	 		\frac{M(l_{0})-2r_{0}}{M(l_{0})}=1- \frac{2r_{0}}{M(l_{0})}\geq 1-\frac{2r_{0}}{3r_{0}}=\frac{1}{3}.
	 		$$
	 		that is $M(l^{'}_0) \geq \frac{1}{3} M(l_0)$.
	 	%	\begin{figure}
	 		%	\centering
	 	%		\includegraphics[width=0.4\textwidth]{disorder-segment.png}
	 		%	\caption{ This figure shows that if $l_{0}\cap O_{r_{0}} \neq \emptyset$, then $|l_{0}\cap  O_{r_{0}}| \geq 2r_{0}$.}
	 	%		\label{Fig.disorder-segment}
	 %		\end{figure}
	 	\end{proof}

	Similar to the proof of Sec. \ref{sec:3},
	 Lemma  \ref{lemma15}, \ref{lemma16}, \ref{lemma17} and Lemma \ref{lemma18},  \ref{lemma19} can derive Lemma \ref{lemma6} and Lemma \ref{lemma8} respectively, while Lemma \ref{lemma6} and Lemma \ref{lemma8} can derive the ordered part and the disordered part of intermittent-synchronization respectively.

 \subsection{Proof of the uniqueness of absolutely continuous invariant measure when  \( c \in [0, \frac{1}{3}) \).}

  We demonstrate in detail the uniqueness of the absolute continuous invariant measure for CML of $f(x)=3x \mod 1$, and the relevant proof for $f(x)=-3x \mod 1$ is completely similar.

	\begin{lemma}\label{lemma20}
		Let the origin point be \( O = (0, 0) \),  the point \( M = \left( \frac{1}{6}, \frac{1}{6} \right) \) and $\epsilon>0$ depending on $T$.
		When \( c \in [0, \frac{1}{3}) \), for any open set \( G \), there exists an integer \( n \) such that \( T^n G \) contains a neighborhood of \( M \).
	\end{lemma}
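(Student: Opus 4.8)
The plan is to follow the proof of Lemma~\ref{lemma14}, with the single diagonal discontinuity $(\frac{1}{2},\frac{1}{2})$ of the Lorenz map replaced by the two diagonal discontinuities $(\frac{1}{3},\frac{1}{3})$ and $(\frac{2}{3},\frac{2}{3})$ of $f(x)=3x\bmod 1$, and exploiting that $M=(\frac{1}{6},\frac{1}{6})$ is exactly the midpoint, along the diagonal $\Delta:=\{x_1=x_2\}$, between the fixed point $O=(0,0)$ of $T$ and the nearest diagonal discontinuity $(\frac{1}{3},\frac{1}{3})$. First, by Lemmas~\ref{lemma16} and~\ref{lemma17} (the analogues here of Lemmas~\ref{lemma10} and~\ref{lemma11}), exactly as in the proof of Lemma~\ref{lemma14}, for any open set $G$ some iterate $T^{k}(G)$ has a convex component $\widehat{G}$ meeting $\Delta$. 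Next, $\Delta$ is $T$-invariant and $T|_{\Delta}$ corresponds to $x\mapsto 3x\bmod 1$, whose discontinuities lie over $x=\frac{1}{3}$ and $x=\frac{2}{3}$; the segment $\widehat{G}\cap\Delta$ is nondegenerate, and while no forward iterate of it contains one of those two points its length is tripled at each step, so some least $i_{0}$ has, say, $(\frac{1}{3},\frac{1}{3})\in T^{i_{0}}(\widehat{G})$ (the case $(\frac{2}{3},\frac{2}{3})$ being symmetric, using the region $[\frac{2}{3},1]^{2}$ in place of the central region below).

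Then I would localize at $O$. Pick an open $G_{0}\subset T^{i_{0}}(\widehat{G})$ containing $(\frac{1}{3},\frac{1}{3})$, with $\ell_{0}:=|G_{0}\cap\Delta|>0$, and set $G_{1}=G_{0}\cap R$, where $R=[\frac{1}{3},\frac{2}{3})\times[\frac{1}{3},\frac{2}{3})$ is the central region: on $R$ one has $f(x)=3x-1$, so $f(\frac{1}{3})=0$ and $T$ maps the corner $(\frac{1}{3},\frac{1}{3})$ of $R$ to $(I+cA)(0,0)=O$. Hence $TG_{1}$ is open with $O\in\overline{TG_{1}}$ and $TG_{1}\cap\Delta$ a segment issuing from $O$ of length $\geq 3\ell_{1}>0$, $\ell_{1}:=|G_{1}\cap\Delta|$. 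Since $O$ is a fixed point and $T(\bm{x})=3(I+cA)\bm{x}$ near $O$, with eigenvalues $3$ along $\Delta$ and $3(1-2c)$ transversally (both of modulus $>1$ because $c<\frac{1}{3}$), iterating $TG_{1}$ keeps $O$ in the closure and triples the diagonal segment issuing from $O$ until it reaches $(\frac{1}{3},\frac{1}{3})$; so for some $j$ the diagonal segment of $T^{j}(TG_{1})$ contains all of $[O,(\frac{1}{3},\frac{1}{3})]$, hence $M$ as an interior point. As $T^{j}(TG_{1})$ is open in $[0,1]^{2}$ and $M$ is interior to $[0,1]^{2}$, it contains a neighborhood of $M$; collecting the iteration counts yields the desired $n$.

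To upgrade this to a radius $\epsilon$ depending on $T$ only, I would use the interior repelling fixed point $(\frac{1}{2},\frac{1}{2})$, which lies in the interior of $R$ (there $T$ is affine with eigenvalues $3$ and $3(1-2c)$) and satisfies $T(\frac{1}{6},\frac{1}{6})=(\frac{1}{2},\frac{1}{2})$. After one more step the open set contains a possibly thin neighborhood of $(\frac{1}{2},\frac{1}{2})$; iterating, and letting the fast diagonal direction wrap through the adjacent branches $[0,\frac{1}{3})^{2}$ and $[\frac{2}{3},1]^{2}$, one checks that each full diagonal wrap replaces the transverse width $\rho$ by roughly $\rho^{1-\beta}$ with $\beta=1+\log_{3}(1-2c)\in(0,1]$, a map whose iterates tend to a $T$-dependent constant, so after finitely many wraps one obtains a $T$-fixed transverse width together with diagonal extent $\approx 1$, hence a $T$-fixed ball about $M$. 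I expect this normalization to be the only genuinely delicate point (the transverse multiplier $3(1-2c)$ degenerates to $1$ as $c\to\frac{1}{3}$, so the number of wraps grows), but it is inessential for the uniqueness argument of Section~\ref{sec:4}, which needs only that $T^{n_{i}}B_{i}$ contains \emph{some} neighborhood of $M$ --- already furnished by the first two paragraphs.
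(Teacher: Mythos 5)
Your first two paragraphs reproduce the paper's own argument essentially step for step: Lemmas \ref{lemma16}--\ref{lemma17} yield a convex component meeting the diagonal, tripling of the diagonal intersection forces a first hit of $(\frac{1}{3},\frac{1}{3})$ (or symmetrically $(\frac{2}{3},\frac{2}{3})$), intersecting with the central region sends that corner to the fixed point $O=(0,0)$, and the diagonal segment issuing from $O$ then triples under iteration until it covers $M=(\frac{1}{6},\frac{1}{6})$, so the (open) iterate contains a neighborhood of $M$. Your third paragraph on normalizing to a $T$-uniform radius goes beyond what the paper actually proves (its proof, like yours, only produces \emph{some} neighborhood of $M$), and you correctly observe that this extra step is not needed for the uniqueness argument of Section \ref{sec:4}.
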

	
	\begin{proof}
			By Lemma \ref{lemma16} and Lemma \ref{lemma17}, we know that for any open set \( G \),  there exists a component that intersects the diagonal  \( x_1 = x_2 \) after iterations.
		We denote the convex set of this component after iterations as \( \widehat{G} \) such that \( \widehat{G} \) intersects with the diagonal.
		
		Before \( \widehat{G} \) intersects the points \( (1/3, 1/3) \) or \( (2/3, 2/3) \) under iterations, the length of its intersection with the set \( \{x_1 = x_2\} \) increases by a factor of \( 3 \) with each iteration, it follows that \( \widehat{G} \) will eventually intersect  \( (1/3, 1/3) \) or  \( (2/3, 2/3) \) under iterations.
	   We only discuss the case where \( \widehat{G} \)  intersects with  \( (1/3, 1/3) \)  here, and the other case is completely the same.
		
		Let \( i_0 \in \mathbb{N} \) be the number of the iterations such that \( (1/3, 1/3) \notin T^{n}(\widehat{G}) \), $n=1,2, \cdots, i_{0}-1$ and \( (1/3, 1/3) \in T^{i_0}(\widehat{G}) \), then we can define an open set \( G_0 \subset T^{i_0}(\widehat{G}) \) such that  \( (1/3, 1/3) \in G_0 \).
		Let \( l_0 = |G_0 \cap \{x_1 = x_2\}| > 0 \), and define \( G_1 = G_0 \cap \text{region 5} \) and  \(l_{1}= |G_1 \cap \{x_1 = x_2\}|>0  \).
		Then $O=(0,0)\in TG_{1}$ and $ |TG_1 \cap \{x_1 = x_2\}|=3l_{1} $, as shown in Fig. \ref{unique2}.
		% Additionally, we define the points \( P = (k/2, k/2) \) and \( Q = (k(1 - k/2), k(1 - k/2)) \), as shown in Fig. \ref{unique}. Note that \( T(1/2, 1/2) = P \) and \( T(P) = Q \).
		
		If \( M \in TG_1 \), then the conclusion holds directly.
		Therefore, we only need to consider  \( M \notin TG_1 \).
		In this case, we have
		\[
		|T^{2}G_1 \cap \{x_1 = x_2\}|=3^{2}l_{0} \quad \text{and} \quad O=(0,0)\in T^{2}G_{1}
		\]
		\begin{figure}
			\centering
			\includegraphics[width=0.5\textwidth]{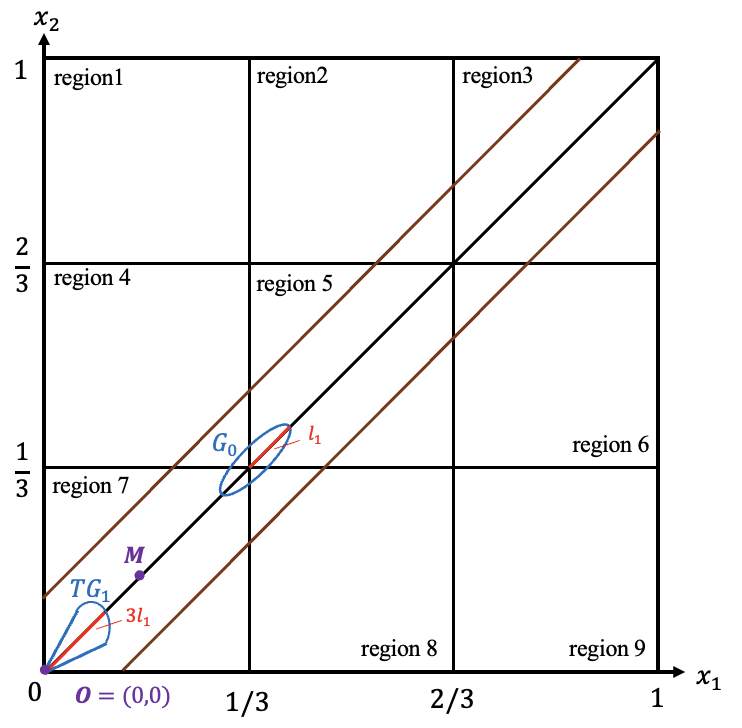}
			\caption{The figure shows that $(1/3, 1/3) \in  G_0 $, $G_1 = G_0 \cap~ \text{region} ~5$ and \( l_{1}=|G_1 \cap \{x_1 = x_2\}|  \).}
			\label{unique2}
		\end{figure}
		Since $O$ is a fixed point for $T$, then for any $i\in \mathbb{N}$, we have
		\[
		|T^{i}G_1 \cap \{x_1 = x_2\}|=3^{i}l_{0} \quad \text{and} \quad O=(0,0)\in T^{i}G_{1}
		\]
		Therefore, there exist $j\in \mathbb{N}$ such that
		\[
		|T^{j}G_1 \cap \{x_1 = x_2\}|=3^{j}l_{0} >|OM|
		\]
		which implies \( M \in T^{j}G_1 \), and the conclusion follows.
	\end{proof}
	
Note that for $0\le c<1/3$, each eigenvalue of $T$ possesses an absolute value strictly larger than 1 at every smooth point.
		From Theorem 3 in Tsujii \cite{tsujii2001absolutely}, we conclude that for CML with Lorenz map $f(x)= 3x ~\text{mod} ~1$ there exist finitely many absolutely continuous ergodic probability measures \( \mu_i \), where \( 1 \leq i \leq p \), and the basin of each measure \( \mu_i \), defined as
		\[
		\text{Basin}(\mu_i) = \left\{ x \in [0,1]^2 \mid \frac{1}{n} \sum_{j=0}^{n-1} \delta_{T^j(x)} \xrightarrow{w} \mu_i \right\},
		\]
		is an open set modulo sets of Lebesgue measure zero, and the union \( \bigcup_{i=1}^p \text{Basin}(\mu_i) \) has full Lebesgue measure in \( [0,1]^2 \).
		
		For two distinct absolutely continuous ergodic measures \( \mu_1 \) and \( \mu_2 \), we define two small balls \( B_1 \subset \text{Basin}(\mu_1) \) and \( B_2 \subset \text{Basin}(\mu_2) \).
		From Lemma \ref{lemma20}  we know that after iterations, both \( B_1 \) and \( B_2 \) will contain neighborhoods of \( M \), denoted as \( M_{\epsilon_1} \) and \( M_{\epsilon_2} \) respectively.
		Therefore, both \( B_1 \) and \( B_2 \) will contain a common neighborhood of \( M \)  after iterations, specifically \( M_{\epsilon_1} \cap M_{\epsilon_2} \).
		This shows that basin of different absolutely continuous ergodic measure $\mu_{i}$ will contain the same open set modulo a Lebesgue null set after iterations.
		Therefore, there can be only one absolutely continuous ergodic probability measure,  thereby proving the uniqueness of the measure.

	 %----------------------------------------------------------------------------------------------
	 %----------------------------------------------------------------------------------------------
	 %----------------------------------------------------------------------------------------------
	 \section{\label{sec:5}Analysis and application of geometric-combinatorics Method}
	 In this section, we discuss the potential applications of our method to different kinds of CMLs and compare our new geometric-combinatorics method with the traditional Frobenius-Perron operator-based method.
	
	 \subsection{multi-node nearest-neighbor coupled map lattice}
	 The nerest-neighbor coupled map lattice is a locally coupled system in which each node interacts only with its neighbors.
	 The case of the 8-node nearest-neighbor coupling is shown in panel (a) of Fig.  \ref{Fig.8nearest6global} as an example.
	
	  \begin{figure}
	 	\includegraphics[width=0.48\textwidth]{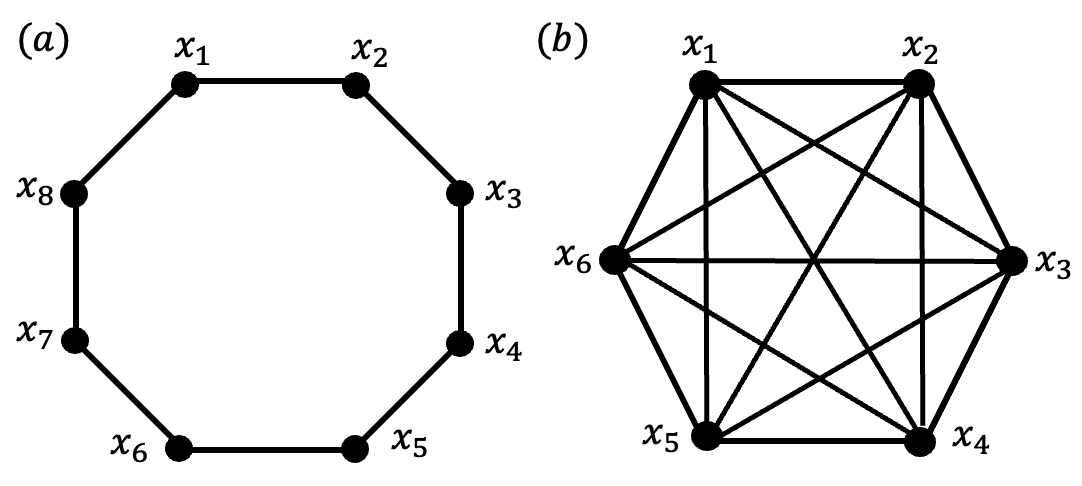}
	 	\caption{\label{Fig.8nearest6global} The figure (a) shows a schematic diagram of 8-node nearest-neighbor coupling and figure (b) shows a schematic diagram of 6-node globally coupling.
	 Each	$x_{i}$ represents the state of node $i$.}
	 \end{figure}
	
	 For an $n$-node nearest-neighbor coupled map lattice, the dynamical equation for each node is given by:
	
	 \begin{eqnarray}\label{nearest}
	 	T: \left\{
	 		\begin{array}{ll}
	 			\bar{x}_1 &= (1 - 2c) f(x_1) + c f(x_2)  + c f(x_n),  \\
	 		\bar{x}_2 &= c f(x_1) + (1 - 2c) f(x_2) + c f(x_3), \\
	 		&\vdots \\
	 		\bar{x}_n &= c f(x_1) + c f(x_{n-1}) +  (1 - 2c) f(x_n).
	 		\end{array}
	 		\right.
	 \end{eqnarray}
	 where $\bar{x}_i$ represents the next state of node $i$,   $f(x_i)$ is the standard Lorenz map for node $i$
	 and $c$ is the coupling strength which is in the range $(0,0.5)$.
	
	 Since the slope of the Lorenz map is constantly 2, the Jacobian matrix $J$ of (\ref{nearest}) takes the form:
	
	 \[
	 J = 2 \begin{pmatrix}
	 	(1 - 2c) & c &0& \dots & c \\
	 	c & (1 - 2c) &c& \dots & 0 \\
	 	\vdots & \vdots & \ddots & \vdots \\
	 	c & 0 & 0&\dots & (1 - 2c)
	 \end{pmatrix}.
	 \]
	
	 The eigenvalues of $J$ are given by
	 \[
	 \lambda_k = 2(1 - 2c + 2c\cos(\frac{2k\pi}{n})), \quad k=0,1,\dots, n-1.
	 \]
	 Among these eigenvalues, the maximum eigenvalue is $\lambda_0 = 2$, and the minimum eigenvalue satisfies $\lambda_{n-1} > 2(1 - 4c)$.
	
	 \begin{figure}
	 	\includegraphics[width=0.45\textwidth]{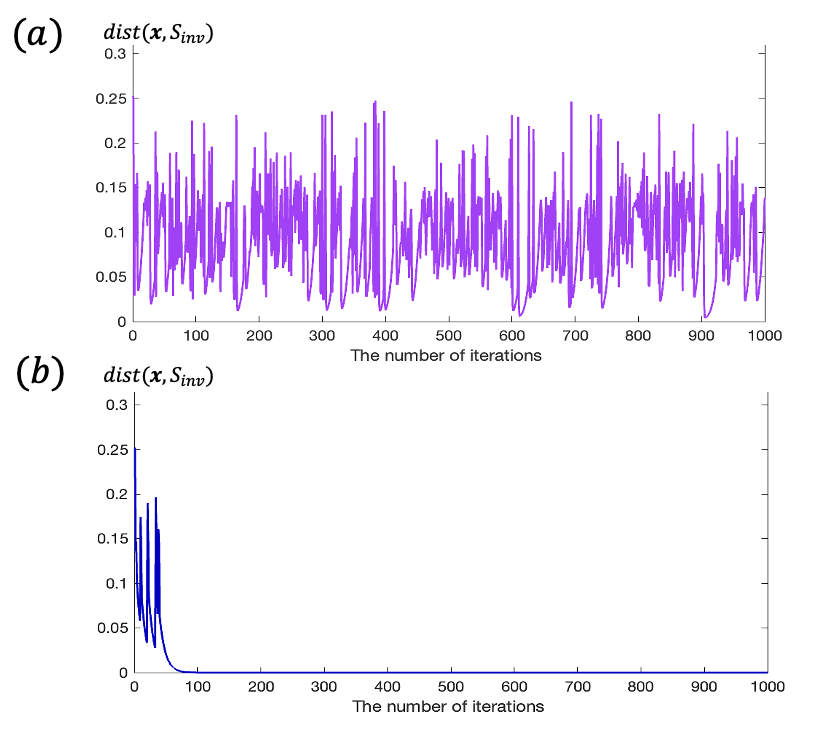}
	 	\caption{\label{Fig.5node} The figures show the change of the distance of any point in the phase space $[0,1]^{5}$ to the invariant set $S_{inv}=\{\bm{x}\in[0,1]^{5}| x_{1}=x_{2}=\cdots =x_{5}\}$ after iterations in the 5-node nearest-neighbor CML.
	 	In penal (a), $c=0.3$, and in panel (b), $c=0.4$.}
	 \end{figure}
	
	 	 \begin{figure}
	 	\includegraphics[width=0.45\textwidth]{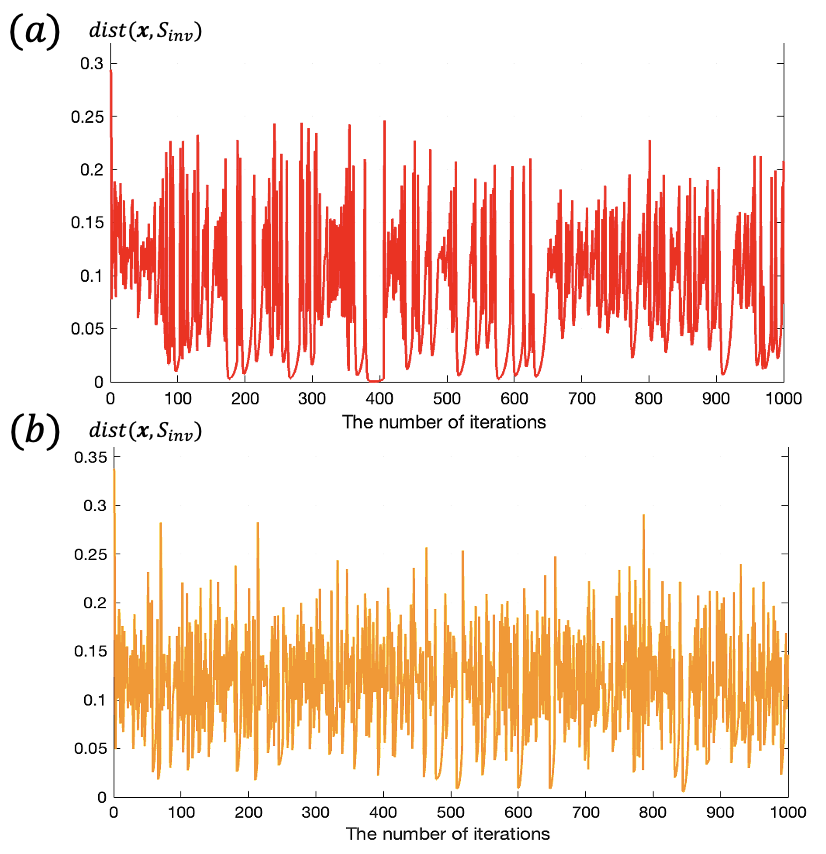}
	 	\caption{\label{Fig.67node} The figures show the change of the distance of any point in the phase space $[0,1]^{n}$ to the invariant set $S_{inv}=\{\bm{x}\in[0,1]^{n}| x_{1}=x_{2}=\cdots =x_{n}\}$ after iterations in the n-node nearest-neighbor CML.
	 		In penal (a), $n=6$, $c=0.4$, and in panel (b), $n=7$, $c=0.4$.}
	 \end{figure}
	 For $n >5$, regardless of the choice of $c$ within the range $(0,0.5)$, we always have $\lambda_1 > 1$. This implies that, besides expansion along the vector direction $(1,1,\cdots 1)$,  the transformation $T$ remains expansive in other directions with iterations.
	 Consequently, complete synchronization cannot occur.
	
	 \begin{figure}
	 	\includegraphics[width=0.43\textwidth]{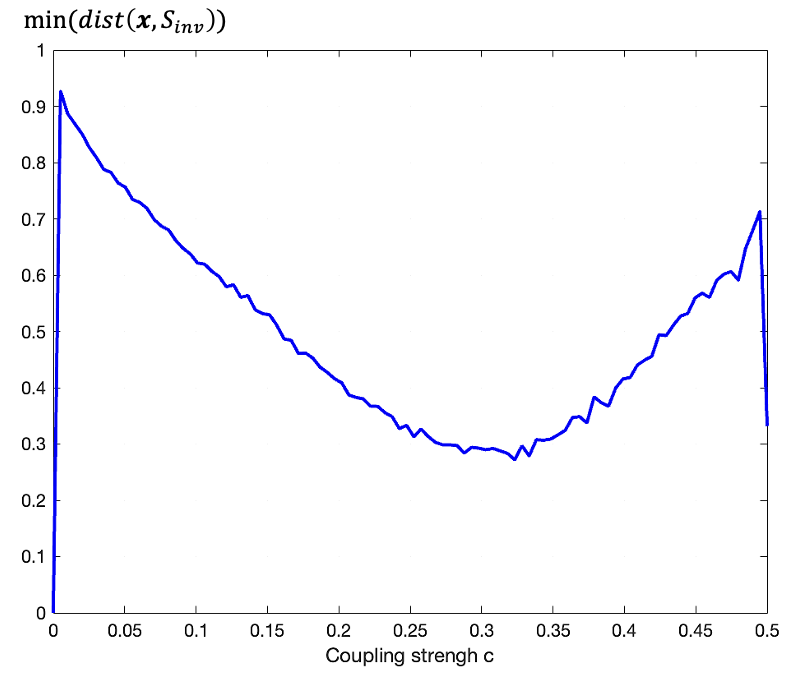}
	 	\caption{\label{Fig.6node} The figure shows the change of the average distance of any point in the phase space $[0,1]^{6}$ to the invariant set $S_{inv}=\{\bm{x}\in[0,1]^{6}| x_{1}=x_{2}=\cdots =x_{6}\}$ of the 80000th to 81000th iteration in the 6-node nearest-neighbor CML.}
	 \end{figure}
	
Through numerical simulation, we also find that when the number of nodes $n\leq 5$, synchronization and intermittent-synchronization alternately occur with the change of coupling strength $c$.
For 5-node CML, intermittent-synchronization occurs when $c=0.3$, as shown in panel (a) of Fig. \ref{Fig.5node}, and synchronization occurs when $c=0.4$, as shown in panel (b) of Fig. \ref{Fig.5node}.
However, when the number of nodes $n>5$, the numerical simulation shows that no matter how the coupling strength changes, only intermittent-synchronization occurs, for example, intermittent-synchronization occurs when $c=0.4$, $n=6$ or $7$, as shown in Fig. \ref{Fig.67node}, and Fig. \ref{Fig.6node} shows that no matter what the value of $c$ is, the average distance from the point to $S_{inv}$ after iterations can not approach 0.
These numerical simulation results are in complete agreement with our theoretical analysis.
	
	 Therefore, studying   transitions in multi-node nearest-neighbor coupled map lattices is meaningless.
	 We anticipate that our geometric-combinatorics method will provide a more detailed analysis of intermittent-synchronization in multi-node nearest-neighbor coupled map lattices in future research.
	
	 \subsection{multi-node globally coupled map lattice}
	 The globally coupled map lattice is a globally interactive system in which each node interacts with all other nodes.
	 Therefore, each node is affected not only by itself, but also by the average state of the entire system.
	 The case of the 6-node globally coupling is shown in panel (b) of Fig. \ref{Fig.8nearest6global} as an example.
	
	 For an $n$-node globally coupled map lattice, the dynamical equation for each node is given by:
	 	 \begin{eqnarray}\label{global}
	 	T:	\left\{
	 		\begin{array}{ll}
	 			\bar{x}_1 &= (1 - (n - 1)c) f(x_1) + c f(x_2)  + \dots + c f(x_n), \\
	 		\bar{x}_2 &= c f(x_1) + (1 - (n - 1)c) f(x_2) + \dots + c f(x_n), \\
	 		&\vdots \\
	 		\bar{x}_n &= c f(x_1) + c f(x_2) + \dots + (1 - (n - 1)c) f(x_n),
	 		\end{array}
	 		\right.
	 \end{eqnarray}
	 where $\bar{x}_i$ represents the next state of node $i$,
 $f(x_i)$ is the standard Lorenz map for node $i$,
 and $c$ is the coupling strength.

	 Each node's dynamics are determined by its own state and the states of all other nodes, with the coupling strength $c$ determining the degree of mutual influence between nodes.
	
	 The elements of the Jacobian matrix $J$ are given by $\frac{\partial \bar{x}_i}{\partial x_j}$. Since the slope of the Lorenz map is constantly 2, the Jacobian matrix $J$ of (\ref{global}) takes the form:
	
	 \[
	 J = 2 \begin{pmatrix}
	 	(1 - (n - 1)c) & c & \dots & c \\
	 	c & (1 - (n - 1)c) & \dots & c \\
	 	\vdots & \vdots & \ddots & \vdots \\
	 	c & c & \dots & (1 - (n - 1)c)
	 \end{pmatrix}.
	 \]
	
	 To determine the stability of the synchronized solution, we analyze the eigenvalues of the Jacobian matrix:
	 \begin{itemize}
	 	\item In the direction of the vector $(1,1,\cdots,1)$, the Jacobian matrix has the eigenvalue:
	 	\[
	 	\lambda_1 = 2.
	 	\]
	 	\item For the remaining $n-1$ eigenvalues, we obtain:
	 	\[
	 	\lambda_2 = \lambda_3 = \dots = \lambda_n = 2 (1 - nc).
	 	\]
	 \end{itemize}
	
	 When $c < \frac{1}{2n}$, we have $\lambda_2 = \lambda_3 = \dots = \lambda_n > 1$, indicating that the system experiences strong perturbations in all directions except the vector direction  $(1,1,\cdots,1)$.
	 As a result, the system cannot return to the synchronized state, since perturbations in all directions drive the system away from the synchronized solution.
	
	 As the number of nodes $n$ increases, the range of $c$ satisfying this condition continuously shrinks.
	 This implies that the maximum coupling strength for which intermittent-synchronization occurs decreases towards zero as the number of nodes increases.
	 Consequently, studying intermittent-synchronization in multi-node globally coupled map lattices holds little significance.
	
	 \subsection{Results Obtained Using geometric-combinatorics  Method}
	 By applying our geometric-combinatorics method to the two-node coupled non-standard tent map lattice, we have determined the range of coupling strengths corresponding to the occurrence of intermittent-synchronization when the tent map’s vertex lies within the \( \epsilon \)-neighborhood of \( (\frac{1}{2},1) \) with $\epsilon =0.01$.
	 Additionally, we have identified the critical coupling strength at which the transition between synchronization and intermittent-synchronization occurs.
	 These results cannot be obtained using the traditional Forbenius-Perron operator-based method.
	 We have compiled these results and are ready to submit them.
	
	 \subsection{Comparison Between the Geometric-Combinatorics and Forbenius-Perron Operator-Based Methods}
	 Our method provides a more intuitive mathematical description of both synchronization and intermittent-synchronization.
	 Moreover, the geometric-combinatorics method enables us to analyze the full range of coupling strengths and observe the corresponding dynamical behaviors, which is beyond the capability of the traditional Forbenius-Perron operator-based method.
	
	 For example, in the case of a two-node coupled standard tent map lattice, Keller\cite{keller1997mixing}, using the Forbenius-Perron operator, was only able to analyze the dynamical system under sufficiently small coupling strengths and could not capture the transition between different dynamical behaviors.
	 In contrast, Li et al.\cite{li2021intermittent} employing the geometric-combinatorics method, successfully identified the critical coupling strength marking the transition between intermittent-synchronization and complete synchronization.
	
	 Here is another example: For the $n$-node coupled tent map lattice, Keller\cite{keller1997mixing} used the traditional Frobenius-Perron operator method to discuss the dynamical system phenomena only for the coupling strength $c \in (0, c_{\text{uni}})$, where $c_{\text{uni}}$ is sufficiently small and does not provide any estimates.
	 In contrast, Li et al.\cite{li2021intermittent}, using the geometric-combinatorics method, found that intermittent-synchronization phenomena occurs within $c \in (0, c_*)$, where $c_*$ is related to the number of nodes $n$.
	 Meanwhile, by applying the iterative lemma, we can estimate the corresponding coupling strength range -- that is, intermittent-synchronization occurs in the $n$-node coupled tent map lattice when
	 \[
	 c < \frac{1}{2} - \frac{(2^n - 1)^{1/n}}{4}.
	 \]

	 \section{\label{sec:6}Conclusions}
	 In this paper, we develop the new geometric-combinatorics method originated from \cite{li2021intermittent}to study the dynamic behaviors of coupled map lattice (CML) with discontinuous piecewise expanding maps including $f(x)= 2x~ \text{mod} ~ 1 ~~~\text{or}~~~~ f(x)=\pm 3x ~ \text{mod} ~  1$ on $[0,1]$. The discontinuity of the Lorenz map complicates the study.
	 We proved the existence of a threshold on the coupling strength for the occurrence of intermittent-synchronization and the uniqueness of absolutely continuous invariant measure for CML, which coincides with numerical results.
	
In practice, the uniqueness (and existence) of absolutely continuous invariant measure obtained in this paper together with
Tsujii's result \cite{tsujii2001absolutely} shows that the trajectory of almost each point in the sense of Lebesgue measure will visit almost everywhere of a region with a positive Lebesgue measure for infinite times.

Based on the new geometric-combinatorics method, we have obtained the synchronization--intermittent synchronization transition and the uniqueness of ACIM for various situations, which is not limited to the case with $2$ nodes.
	 Further work on this avenue of investigation is ongoing.
	
		 In comparison, most similar results obtained using  Forbenius-Perron operator \cite{keller1996coupled,fischer2000transfer,rugh2002coupled} depend heavily on the smallness condition  on the coupling strength with the exception for the case with $2$ nodes and standard (slope=$2$) Lorenz type map  by Keller et al. \cite{keller1992some} which seems difficult to be generalized directly to $3$ or more nodes. Moreover, the abstract method based on Frobenius-Perron operator may leads to partial loss of geometric information. For example, CML with identical maps is symmetric with respect to the synchronization manifold (the diagonal). It implies that the diagonal must play a key role in the dynamical behavior of CMLs, which is beyond the consideration of abstract functional sketch. Hence results based on Forbenius-Perron operator are far from numerical results. In fact, the information from the diagonal is necessary in the sufficient-necessary result of Keller et al \cite{keller1992some}. This implies the importance of systemically developing a  geometric method.

%	 At the same time, the conclusions obtained by using Frobenius-Perron operator are relatively abstract, which cannot be intuitively understood by non-mathematical researchers.
	
	 %Our method does not rely on traditional  Frobenius-Perron operator to analyze the dynamic behavior of the coupled map lattice.
	 %Instead, we leverage the dynamic complexity of the coupled lattice and the expansion properties of the maps to observe its behavior more intuitively from a geometric perspective.
	% By linking the coupling strength with the dilation properties of the local maps, we provide a geometric interpretation of the transition behavior in the coupled map lattice across different parameter regimes, as well as its dynamical stability.

\appendix
\section{Proof of Lemma \ref{Lem:diedai} and Corollary \ref{Coro:diedai}}

The specific proof of Lemma \ref{Lem:diedai}  is as follows.
\begin{proof}
	We iterate the map on~$\Omega$~for~$k(N)$~times.
	Then there are at most~$a^{\lfloor \frac{k(N)}{m_{0}} \rfloor+1}$~disjoint sets~$\widehat{\Omega}_{j}\subset \Omega$~such that~$T^{k(N)}(\widehat{\Omega}_{j})$~is a component of~$T^{k(N)}(\Omega)$~for each~$j$~(note that the assumption that~$M(T^{k(N)}(\widehat{\Omega}_{j})) \leq \delta_{1}M(D)$~is valid for each~$\widehat{\Omega}_{j}$~and each~$i \leq k(N)$).
	
	Then the total measure of all~$\widehat{\Omega}_{j}$'s satisfying~$M(T^{k(N)}(\widehat{\Omega}_{j})) \leq 2^{-\mu^{N}}M(D)$~is less than

	\begin{widetext}
		\begin{eqnarray*}
			& 2^{-\mu^{N}}&M(D)a^{\lfloor \frac{k(N)}{m_{0}} \rfloor+1} E_{-}(c)^{-k(N)} \\
			& \leq& a 2^{-\mu^{N}}M(D) \left(\frac{E_{-}(c)}{a^{1/m_{0}}}\right)^{-k(N)} \quad   (\text{since } M(\widehat{\Omega}_{j}) \leq E_{-}(c)^{-k(N)}M(T^{k(N)}(\widehat{\Omega}_{j}))) \\
			&    \leq &a 2^{-\mu^{N}}M(D) \left(\frac{E_{-}(c)}{a^{1/m_{0}}}\right)^{\lfloor \log_{E_{+}(c)} \frac{M(\Omega)}{\delta_{1}M(D)} \rfloor + 1} \\
			&    \leq& a 2^{-\mu^{N}}M(D) \left(\frac{E_{-}(c)}{a^{1/m_{0}}}\right)^{1-\log_{E_{+}(c)}\delta_{1}} \left(\frac{E_{-}(c)}{a^{1/m_{0}}}\right)^{-\log_{E_{+}(c)}M(D)} \left(\frac{E_{-}(c)}{a^{1/m_{0}}}\right)^{-\log_{E_{+}(c)}M(\Omega)} \\
			& =& F(c)2^{-\mu^{N}}M(D)^{1-\log_{E_{+}(c)} \left(\frac{E_{-}(c)}{a^{1/m_{0}}}\right)} M(\Omega)^{\log_{E_{+}(c)} \left(\frac{E_{-}(c)}{a^{1/m_{0}}}\right)} \quad (\text{since } a^{\log_{b}c} = c^{\log_{b}a}).
		\end{eqnarray*}
	\end{widetext}
	Hence it possesses a portion of~$\Omega$~less than
	\begin{eqnarray*}
		&	F(c)2^{-\mu^{N}} \left(\frac{M(\Omega)}{M(D)}\right)^{\log_{E_{+}(c)} \left(\frac{E_{-}(c)}{a^{1/m_{0}}}\right) - 1} \\
		&\leq F(c)2^{-\mu^{N}} \left(2^{-\mu^{N+1}}\right)^{\log_{E_{+}(c)} \left(\frac{E_{-}(c)}{a^{1/m_{0}}}\right) - 1} \leq F(c)2^{-d\mu^{N}}.
	\end{eqnarray*}
	Choose~$\Omega_{1}, \cdots$~to be all~$\widehat{\Omega}_{j}$~with a measure larger than~$ 2^{-\mu^{N}}M(D)$~and the proof is completed.
\end{proof}

The specific proof of Corollary \ref{Coro:diedai} is as follows.
	\begin{proof}
		Let~$N$~be the unique integer such that~$2^{-\mu^{N+1}}M(D)\leq M(\Omega)<2^{-\mu^{N}}M(D)$~and denote~$\Omega_{N+1}=\Omega$.
		Applying the Iteration Lemma \ref{Lem:diedai} on~$\Omega_{N+1}$, there exist disjoint sets~$\Omega^{i}_{N+1} \subset \Omega_{N+1}$,~$i = 1, \cdots$, such that
		\begin{itemize}
			\item ~$T^{k(N)}(\Omega^{i}_{N+1})$~is a component of~$T^{k(N)}(\Omega_{N+1})$~with
			
			$M(T^{k(N)}(\Omega^{i}_{N+1})) \geq 2^{-\mu^{N}}M(D)$~for each~$i$~and
			\item ~$M\left(\sum_{i \geq 1} \Omega^{i}_{N+1}\right) \geq (1 - F(c)2^{-d\mu^{N}}) M(\Omega_{N+1})$.
		\end{itemize}
		
		Let
		$$
		I_{N} = \{ i \mid M(T^{k(N)}(\Omega^{i}_{N+1})) \in [2^{-\mu^{N}}M(D), 2^{-\mu^{N-1}}M(D)] \},
		$$
		and denote the set of all other~$i$~by~$I_{N}^{'}$.
		For~$i \in I_{N}$, applying the Iteration Lemma \ref{Lem:diedai} on~$T^{k(N)}(\Omega^{i}_{N+1})$, we obtain disjoint sub-curves or measurable subsets~$\Omega^{i,j}_{N} \subset \Omega^{i}_{N+1}$,~$j = 1, \cdots$~and~$k_{i}(N)$ such that
		\begin{itemize}
			\item ~$T^{k_{i}(N)}(\Omega^{i,j}_{N})$~is a component of~$T^{k_{i}(N)}(\Omega^{i}_{N+1})$~with~
			
			$M(T^{k_{i}(N)}(\Omega^{i,j}_{N})) \geq 2^{-\mu^{N-1}}M(D)$~for each~$j$~and
			\item ~$M\left(\sum_{j \geq 1} \Omega^{i,j}_{N}\right) \geq (1 - F(c)2^{-d\mu^{N-1}}) M(\Omega^{i}_{N+1})$.
		\end{itemize}
		
		Thus, we have
		\begin{eqnarray*}
			M\left( \cup_{i \in I_{N}} \left( \cup_{j} \Omega^{i,j}_{N} \right) \cup \left( \cup_{i \in I_{N}^{'}} \Omega^{i}_{N+1} \right) \right)\\
			\geq (1 - F(c)2^{-d\mu^{N}})(1 - F(c)2^{-d\mu^{N-1}}) M(\Omega_{N+1}).
		\end{eqnarray*}
		Moreover, the sets on the left-hand side of the above inequality are disjoint, and for each set~$\widetilde{\Omega}$, there exists a~$k(\widetilde{\Omega})$~such that~$T^{k(\widetilde{\Omega})}(\widetilde{\Omega})$~is a component with~$M(T^{k(\widetilde{\Omega})}(\widetilde{\Omega})) \geq 2^{-\mu^{N-1}}M(D)$.
		
		By induction, we obtain the existence of~$\Omega_{i}$~,~$i = 1, \cdots$~, such that (1) and (2) hold true, where~$c_{1} = \prod_{j=N_{0}}^{N} \left(1 - F(c) (2^{d})^{-\mu^{j}} \right) > 0$, which has a positive lower bound for all~$N$.
		
		In fact, it is sufficient to prove that
		$$
		\prod_{j=N_{0}}^{\infty} \left( 1 - F(c)(2^{d})^{-\mu^{j}} \right) > 0,
		$$
		which can be obtained from the fact that
		$$
		\sum_{j=N_{0}}^{\infty} \ln \left( 1 - F(c)(2^{d})^{-\mu^{j}} \right) \geq F(c) \sum_{j=N_{0}}^{\infty} (2d)^{-\mu^{j}} > -\infty.
		$$
		%(bounded below).
		
		This completes the proof.
	\end{proof}

		\begin{acknowledgments}
		We express our acknowledgement to Jianyu Chen and Jiaohao Xu. We also appreciate the helpful comments from the anonymous reviewers.
		\end{acknowledgments}
		
		\section*{Data Availability Statement}
		
		Data sharing is not applicable to this article as no new data were created or analyzed in this study.
		\bibliography{Lorenzarxiv}

	\end{document}